\documentclass[10pt]{article}
\usepackage{}
\usepackage{amsfonts}
\usepackage{latexsym,bm}
\usepackage{amssymb}
\usepackage{amsmath}
\usepackage{amsthm}
\usepackage{mathrsfs}
\usepackage[all]{xy}
\numberwithin{equation}{section}

\theoremstyle{plain}
\newtheorem{thm}{Theorem}[section]
\newtheorem{lem}[thm]{Lemma}
\newtheorem{prop}[thm]{Proposition}
\newtheorem{cor}[thm]{Corollary}
\theoremstyle{remark}
\newtheorem{rmk}[thm]{Remark}
\newtheorem{e.g.}[thm]{Example}
\theoremstyle{definition}
\newtheorem{defi}[thm]{Definition}


\begin{document}

\title{\Large Quadro-quadric special birational transformations from projective spaces to smooth complete intersections}
\author{\normalsize Qifeng Li}
\date{}
\maketitle

\begin{abstract}
Let $\phi: \mathbb{P}^{r}\dashrightarrow Z$ be a birational transformation with a smooth connected base locus scheme, where $Z\subseteq\mathbb{P}^{r+c}$ is a nondegenerate prime Fano manifold. We call $\phi$ a quadro-quadric special briational transformation if $\phi$ and $\phi^{-1}$ are defined by linear subsystems of $|\mathcal{O}_{\mathbb{P}^{r}}(2)|$ and $|\mathcal{O}_{Z}(2)|$ respectively.
In this paper we classify quadro-quadric special birational transformations in the cases where either $(i)$ $Z$ is a complete intersection and the base locus scheme of $\phi^{-1}$ is smooth, or $(ii)$ $Z$ is a hypersurface.
\end{abstract}

\textbf{Keywords:} Birational transformations, QEL-manifolds, Complete intersections.

\textbf{Mathematics Subject Classification (2000):} 14E05, 14N05.


\section{\normalsize Introduction}

We work over the complex number field. Varieties are assumed to be irreducible and reduced unless otherwise stated. A smooth projective variety $V\subseteq\mathbb{P}^{N}$ is called a \textit{prime Fano manifold} if $\text{Pic}(V)=\mathbb{Z}(\mathcal{O}_{V}(1))$ and $V$ is covered by lines. Let $\phi: \mathbb{P}^{r}\dashrightarrow \mathbb{P}^{N}$ be a rational map defined by an $N$-dimensional linear subsystem of $|\mathcal{O}_{\mathbb{P}^{r}}(a)|$, and $Z$ be the closure of the image. Assume $\phi: \mathbb{P}^{r}\dashrightarrow Z$ is birational, and $Z\subseteq\mathbb{P}^{N}$ is a prime Fano manifold. Denote by $X$ (resp. $Y$) the base locus scheme of $\phi$ (resp. $\phi^{-1}$). If moreover $X$ is smooth and connected, then we call $\phi$ a \textit{special birational transformation}. Assume that  $\phi^{-1}$ is defined by a linear subsystem of $|\mathcal{O}_{Z}(b)|$. Then $\phi$ is said to be of type $(a, b)$. We call $\phi$ a \textit{quadratic} (resp. \textit{quadro-quadric}) special birational transformation if $a=2$ (resp. $a=b=2$).

It is hard to classify special birational transformations of type $(a, b)$, even if $Z$ is a projective space. Fu and Hwang classified special birational transformation of type $(2, 1)$ in \cite{FH14}. Ein and Shepherd-Barron showed in \cite{ES89} that if $\phi: \mathbb{P}^{r}\dashrightarrow \mathbb{P}^{r}$ is a quadro-quadric special birational transformation, then $X$ and $Y$ are Severi varieties.  Alzati and Sierra classified in \cite{AS13b} quadro-quadric special birational transformations to LQEL-manifolds. Staglian\`{o} studied in \cite{Sta12} quadratic special birational transformations to hypersurfaces. In particular, he described quadro-quadric special birational transformations to smooth quadric hypersurfaces.

Our work is motivated by \cite{Sta12}. The first main result in this paper is as follows:

\begin{thm} \label{introduction: thm. Z hypersurface cases}
Let $\phi: \mathbb{P}^{r}\dashrightarrow Z$ be a quadro-quadric special birational transformation. Assume that $Z\subsetneq\mathbb{P}^{r+1}$ is a nondegenerate smooth hypersurface. Denote by $X$ (resp. $Y$) the base locus scheme of $\phi$ (resp. $\phi^{-1}$). Then $Z$ is a quadric hypersurface, $Y$ is a Severi variety and $X$ is a nonsingular hyperplane section of a Severi variety.
\end{thm}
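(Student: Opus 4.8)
The plan is to resolve $\phi$ on a single smooth variety $W$ dominating both sides and to read off numerical constraints from the two contraction structures. Let $\sigma\colon W\to\mathbb{P}^{r}$ be the blow-up along the smooth connected centre $X$, with exceptional divisor $E$ and $H=\sigma^{*}\mathcal{O}_{\mathbb{P}^{r}}(1)$; since $X$ is smooth, $\mathrm{Pic}(W)=\mathbb{Z}H\oplus\mathbb{Z}E$. Because $\phi$ is defined by quadrics through $X$ and $\phi^{-1}$ by quadrics through $Y$, the second contraction $\tau\colon W\to Z$ satisfies $H_{Z}:=\tau^{*}\mathcal{O}_{Z}(1)=2H-E$ and $H=2H_{Z}-F$, where $F$ is the $\tau$-exceptional divisor over $Y$; hence $E=2H-H_{Z}$, $F=2H_{Z}-H=3H-2E$, and symmetrically $E=3H_{Z}-2F$. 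Writing $n=\dim X$, $m=\dim Y$, $d=\deg Z$ and using adjunction $K_{Z}=(d-r-2)H_{Z}$, I would equate the two expressions $K_{W}=-(r+1)H+(r-n-1)E=(d-r-2)H_{Z}+(r-m-1)F$ and, substituting for $E$ and $F$, compare coefficients to obtain $m=2r-2n-4$ and $d=2r-3n-3$. Pushing forward, $\sigma_{*}F$ and $\tau_{*}E$ are cubic hypersurfaces singular to order two along $X$ and $Y$ respectively, namely the secant varieties $\mathrm{Sec}(X)\subsetneq\mathbb{P}^{r}$ and $\mathrm{Sec}(Y)\subsetneq\mathbb{P}^{r+1}$, both proper since $\phi$ and $\phi^{-1}$ are birational.

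The heart of the argument is to force $d=2$, equivalently $r=\tfrac{3n+5}{2}$. Both base loci are QEL-manifolds: $X$ of type $\delta_{X}=2n+2-r$ and $Y$ of type $\delta_{Y}=2m+1-r$, read off from $\dim\mathrm{Sec}(X)=r-1$ and $\dim\mathrm{Sec}(Y)=r$. I would pinch $r$ by applying Zak's theorem on the dimension of secant varieties to each side. On the $X$-side, $X^{n}\subseteq\mathbb{P}^{r}$ is smooth, nondegenerate with $\mathrm{Sec}(X)\subsetneq\mathbb{P}^{r}$, so Zak's linear-normality bound gives $r\ge\tfrac{3n}{2}+2$, whence $d\ge1$. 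On the $Y$-side, $Y^{m}\subseteq\mathbb{P}^{r+1}$ has proper secant variety, so the same bound gives $r+1\ge\tfrac{3m}{2}+2$; substituting $m=2r-2n-4$ this rearranges to $r\le\tfrac{3n+5}{2}$, hence $d\le2$. Since $Z$ is a nondegenerate hypersurface we have $d\ge2$, so the two inequalities force $r=\tfrac{3n+5}{2}$ and $d=2$; correspondingly $\delta_{X}=\tfrac{n-1}{2}$ and $\delta_{Y}=\tfrac{m}{2}$, exactly the invariants of a hyperplane section of a Severi variety and of a Severi variety.

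The hard part is that $Y$ is not assumed smooth in the hypersurface case, yet the decisive $Y$-side application of Zak needs $Y$ to be a smooth nondegenerate variety with proper secant variety. I would therefore first study $\tau\colon W\to Z$ directly: exploiting that $W$ is smooth, that $F$ is irreducible, and that the quadro-quadric hypothesis makes $F\to Y$ equidimensional with fibres $\mathbb{P}^{r-1-m}$, I would prove that $Y$ is smooth; its nondegeneracy then follows automatically, since a degenerate $Y$ would sharpen the $Y$-side bound to $r\le\tfrac{3n+4}{2}$ and force $d\le1$, contradicting $d\ge2$. It is precisely here that the full quadro-quadric hypothesis, and not merely that $\phi$ is quadratic, is used. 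Once $d=2$ is in hand, $Z$ is a smooth quadric hypersurface, and I invoke Staglian\`{o}'s description of quadro-quadric special birational transformations to smooth quadrics \cite{Sta12}, which identifies $Y$ as a Severi variety and $X$ as a nonsingular hyperplane section of a Severi variety, completing the proof.
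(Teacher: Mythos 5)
There is a genuine gap, and it sits exactly at the point you flag as "the hard part." Your route forces $d=2$ by applying Zak's theorem (Proposition \ref{sec(V) neq P^r then delta<=n/2}) to $Y\subseteq\mathbb{P}^{r+1}$, which requires $Y$ to be \emph{smooth}, nondegenerate, with $\text{Sec}(Y)\neq\mathbb{P}^{r+1}$. Smoothness of $Y$ is precisely what distinguishes Theorem \ref{introduction: thm. Z hypersurface cases} from Theorem \ref{introduction: thm. c.i. and Y smooth implies c=1 and deg(Z)=2}: the latter \emph{assumes} $Y$ smooth and then runs exactly your kind of $Y$-side argument (Lemma \ref{c>= n-2 delta} gives $c\geq n-2\delta$, which for $c=1$ forces $d=2$), while the former exists as a separate theorem because $Y$ cannot be assumed smooth --- indeed, that $Y$ is a Severi variety (hence smooth) is the \emph{conclusion}. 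Your proposed derivation of smoothness ("$W$ smooth, $F$ irreducible, $F\to Y$ equidimensional with fibres $\mathbb{P}^{r-1-m}$") is not an argument: the equidimensionality and linearity of all fibres is itself unproven, and even granted, an equidimensional contraction with linear-space fibres from a smooth total space does not force a smooth base; note that the Ein--Shepherd-Barron criterion (Proposition \ref{ein-shepherd blowing up}) takes smoothness of the centre as a \emph{hypothesis}, so invoking blow-up structure here is circular. The same issue undermines your derivation of $m$ and $d$ from $K_{W}=\tau^{*}K_{Z}+(r-m-1)F$, which presumes the discrepancy of a smooth blow-up (the formulas themselves are correct, but the paper gets them from \cite{ES89, AS13a} and from the VMRT computation in Proposition \ref{delta=(n+c-sum d_i)/2 and sum (d_i^2)<=3n+c}, with no hypothesis on $Y$).

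The second unjustified claim is that $\text{Sec}(Y)\subsetneq\mathbb{P}^{r+1}$ "since $\phi$ and $\phi^{-1}$ are birational." Birationality gives that $\text{Sec}(X)$ is a cubic hypersurface in $\mathbb{P}^{r}$ and that $D_{\mathbb{P}}=\tau(E_{\mathbb{P}})$ is a proper divisor of $Z$ (Propositions \ref{X is QEL and Sec(X) is hypersurface} and \ref{D is divisor degree 2b-1 and Y<=D<=Sec(Y)}), but it says nothing about the ambient secant variety of $Y$: secant lines of $Y$ need not lie in $Z$ and are then invisible to $\phi^{-1}$. When $d=2$ a secant line of $Y$ meets the quadric $Z$ in at least three points and hence lies in it (this is how Corollary \ref{D is the intersection of Z and Sec(Y)} works), but for $d\geq 3$ --- exactly the cases you must exclude --- this fails, and nothing a priori prevents $\text{Sec}(Y)=\mathbb{P}^{r+1}$; the paper's Lemma \ref{c>= n-2 delta} copes with this by deriving a contradiction from $\text{Sec}(Y)=\mathbb{P}^{N}$ via secant cones and Lemma \ref{C_z intersects Z in the union of Y and z}, but again only under smoothness of $Y$. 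In sum, your argument proves (the $c=1$ case of) Theorem \ref{introduction: thm. c.i. and Y smooth implies c=1 and deg(Z)=2}, not Theorem \ref{introduction: thm. Z hypersurface cases}. The paper's actual proof of the hypersurface case never touches $Y$: it combines $d=n+1-2\delta$ and $d^{2}\leq 3n+1$, the Divisibility Theorem, the divisibility $\deg(Z^{(1)})=d!\mid\deg(X)$, Staglian\`{o}'s Hilbert-polynomial constraints, and then kills the two surviving cases $(n,\delta,d)=(25,9,8)$ and $(18,6,7)$ by, respectively, the non-existence of certain quadratic SQEL-manifolds (Proposition \ref{not exist 25-dim. quadric SQEL-mfd with delta=9}) and Mok's recognition theorem combined with linear normality.
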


Remark that there is a classification of Severi varieties due to Zak (see Proposition \ref{classification of Severi varieties} in the following). The key observation for the proof of Theorem \ref{introduction: thm. Z hypersurface cases} is that the VMRT $Z^{(1)}$ of $Z$ is covered by lines, which was proved in \cite{AS13b} and implies that the index $i(Z^{(1)})\geq 2$. On the other hand, as $Z$ is smooth hypersurface, we have $i(Z^{(1)})=r+1-\frac{\deg(Z)(\deg(Z)+1)}{2}$. Combining with the properties of $X$, we get $\dim(X)\leq 33$, and the possible values of $\dim(X)$ and $r$. Most cases can be ruled out in a standard way with the help of the Divisibility Theorem for QEL-manifolds, except one case with $\dim(X)=25$ and $r=43$, where we need to explore some delicate structure of entry loci of QEL-manifolds.

For the complete intersection cases, we get the following

\begin{thm} \label{introduction: thm. c.i. and Y smooth implies c=1 and deg(Z)=2}
Let $\phi: \mathbb{P}^{r}\dashrightarrow Z$ be a quadro-quadric special birational transformation with base locus scheme $X$. Assume that $Z\subsetneq\mathbb{P}^{N}$ is a nondegenerate smooth complete intersection, and the base locus scheme $Y$ of $\phi^{-1}$ is smooth. Then $Z$ is a quadric hypersurface, $Y$ is a Severi variety and $X$ is a nonsingular hyperplane section of a Severi variety.
\end{thm}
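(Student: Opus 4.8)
The plan is to reduce Theorem~\ref{introduction: thm. c.i. and Y smooth implies c=1 and deg(Z)=2} to the hypersurface case already settled in Theorem~\ref{introduction: thm. Z hypersurface cases}: I would prove that the codimension $c$ of $Z\subseteq\mathbb{P}^{r+c}$ equals $1$ and that $\deg(Z)=2$, and then invoke Theorem~\ref{introduction: thm. Z hypersurface cases} verbatim. First I would set up the common resolution. Since $X$ is smooth by hypothesis and $Y$ is smooth by assumption, the blow-up $W\to\mathbb{P}^{r}$ along $X$, with exceptional divisor $E$, also resolves $\phi^{-1}$ and coincides with the blow-up of $Z$ along $Y$, with exceptional divisor $E'$. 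Writing $H,H'$ for the pullbacks of the two hyperplane classes, the quadro-quadric hypothesis gives $H'=2H-E$ and $H=2H'-E'$, hence $E'=3H-2E$. Comparing the two expressions for $K_{W}$ produced by the two blow-up structures, and using $K_{Z}=-i(Z)H'$ with $i(Z)=r+c+1-\sum_{j}d_{j}$ for the complete intersection $Z$ of multidegree $(d_{1},\dots,d_{c})$, yields linear relations among $r$, $n=\dim X$, $m=\dim Y$, $c$ and $\sum_{j}d_{j}$. Because $\phi$ is quadratic, $\mathrm{Sec}(X)=\mathbb{P}^{r}$, so $X$ is a QEL-manifold with secant defect $\delta=2n+1-r$, and $Y$ is a QEL-manifold as well; solving the relations gives $\sum_{j}d_{j}=n-2\delta-2+c$ and $i(Z)=3n-r+5$.

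Next I would bring in the variety of minimal rational tangents. At a general point, $Z^{(1)}$ is the complete intersection in $\mathbb{P}^{r-1}$ of hypersurfaces of degrees $\{2,3,\dots,d_{j}\}_{j=1}^{c}$, so $i(Z^{(1)})=r+c-\sum_{j}\tfrac{d_{j}(d_{j}+1)}{2}$. By \cite{AS13b}, $Z^{(1)}$ is covered by lines, whence $i(Z^{(1)})\ge 2$, i.e. $\sum_{j}\tfrac{d_{j}(d_{j}+1)}{2}\le r+c-2$. Writing $d_{j}=2+e_{j}$ with $e_{j}\ge 0$ and feeding in the relations above, this rearranges to $\sum_{j}e_{j}^{2}\le c+8\delta+8-n$, which is a strong constraint whenever some $d_{j}\ge 3$. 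Combining it (via Cauchy--Schwarz against $\sum_{j}e_{j}=n-2\delta-2-c$) with the Divisibility Theorem for the QEL-manifold $X$, which confines $(n,\delta)$ to a sparse set, should cut the problem down to finitely many numerical possibilities, matching those already disposed of in the proof of Theorem~\ref{introduction: thm. Z hypersurface cases}.

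The main obstacle is the case in which all $d_{j}=2$, that is, $Z$ is a smooth complete intersection of $c\ge 2$ quadrics. Here the VMRT inequality becomes vacuous, since it reads $i(Z^{(1)})=r-2c=3\delta+5\ge 2$, so it cannot be used and the case must be excluded by finer geometry. I expect to rule it out by exploiting the structure of the entry loci of $X$ and $Y$, together with the symmetric QEL constraints forced by the smoothness of $Y$, in order to show that $\delta\ge (n-3)/2$; this forces $c=n-2\delta-2\le 1$, contradicting $c\ge 2$. This step is the analogue of, and should rest on the same entry-locus analysis as, the delicate case $\dim(X)=25$, $r=43$ encountered in the proof of Theorem~\ref{introduction: thm. Z hypersurface cases}, and it is where the bulk of the work will lie.

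Finally, once $c=1$ and $\deg(Z)=2$ have been established, $Z\subsetneq\mathbb{P}^{r+1}$ is a nondegenerate smooth quadric hypersurface, so Theorem~\ref{introduction: thm. Z hypersurface cases} applies directly and yields that $Y$ is a Severi variety and $X$ is a nonsingular hyperplane section of a Severi variety, with the list of possibilities given by Proposition~\ref{classification of Severi varieties}.
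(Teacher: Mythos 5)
Your proposal has a genuine gap at its core, and it is precisely where you acknowledge "this is where the bulk of the work will lie." The case where all $d_j=2$ and $c\geq 2$ is not handled: you only say you "expect" to rule it out by entry-locus arguments showing $\delta\geq (n-3)/2$, but no such argument is given, and the target bound is miscalibrated because of an error made at the outset. For a quadro-quadric special transformation one does \emph{not} have $\mathrm{Sec}(X)=\mathbb{P}^{r}$: by Proposition \ref{X is QEL and Sec(X) is hypersurface}$(i)$, $\mathrm{Sec}(X)$ is a hypersurface of degree $2b-1=3$, so $\delta=2n+2-r$, and the correct relations are $\sum_j d_j=n+c-2\delta$ and $\sum_j(d_j-2)^2\leq 8\delta+c-n$ (Proposition \ref{delta=(n+c-sum d_i)/2 and sum (d_i^2)<=3n+c}, Corollary \ref{e_1=n-2delta-c, e_2<=... etc.}), not the ones you derive. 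With the correct formula, all $d_j=2$ gives $c=n-2\delta$, so concluding $c\leq 1$ would require $\delta\geq(n-1)/2$; since $\delta<n/2$ (Proposition \ref{delta<n/2}), you would in fact have to prove the exact equality $\delta=(n-1)/2$, which is essentially the conclusion itself and is not produced by your sketch. Your treatment of the cases with some $d_j\geq 3$ is also unjustified: the numerics "already disposed of" in Theorem \ref{introduction: thm. Z hypersurface cases} were carried out only for hypersurfaces, so for $c\geq 2$ new cases could a priori appear and would have to be analyzed.

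The paper's proof rests on two tools absent from your proposal, and needs neither case-by-case numerics nor any analogue of the delicate $(n,r)=(25,43)$ analysis. First, Lemma \ref{c>= n-2 delta}: if $Y$ is smooth and $c<n-2\delta$, then $\dim(Y)>\frac{2}{3}(N-2)$, so Zak's linear normality theorem forces $\mathrm{Sec}(Y)=\mathbb{P}^{N}$, and the secant-cone argument of Lemma \ref{C_z intersects Z in the union of Y and z} yields a contradiction; hence $c\geq n-2\delta$. Combined with $e_{1}=n-2\delta-c\geq 0$ from Corollary \ref{e_1=n-2delta-c, e_2<=... etc.}, this pins down $c=n-2\delta$ and all $d_i=2$ in one stroke. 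Second, since $Z$ is then a quadratic manifold, Corollary \ref{D is the intersection of Z and Sec(Y)} gives $\mathrm{Sec}(Y)\cap Z=D_{\mathbb{P}}$, so $\mathrm{Sec}(Y)\neq\mathbb{P}^{N}$; the dimension count $m=\frac{2}{3}(N-2)$ together with Proposition \ref{Y is nondegenerate} then makes $Y$ a Severi variety, and Lemma \ref{c. i. of type (2,...,2) containging Severi var. must be hypersurface} (a smooth connected complete intersection of quadrics containing a Severi variety must be a hypersurface, proved via the divisibility result of Proposition \ref{my result: deg(var)|deg(subvar) and codim_spansub(sub)>=codim_spanvar(var)}) forces $c=1$. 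After that the paper quotes Staglian\`{o}'s result (Proposition \ref{Z hyperquadric then Y Severi and X hyperplane section of Severi}) directly rather than Theorem \ref{introduction: thm. Z hypersurface cases}; your final step is fine modulo that cosmetic difference, but the two steps before it are the heart of the theorem and are missing from your proposal.
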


The key point is that by studying the secant variety of $Y$, we can show that $Y$ is a Severi variety and that $Z$ is a complete intersection of quadric hypersurfaces. Then the theorem follows from a technique result on Severi varieties.

The paper is organized as follows. In Section \ref{section: preliminary}, we recall some facts about Severi varieties, quadratic manifolds, QEL-manifolds, and conic-connected manifolds. In Section \ref{section: type (2, b)}, we study quadratic special birational transformations to prime Fano manifolds. In Section \ref{section: proof of main theorems}, we prove Theorem \ref{introduction: thm. Z hypersurface cases} and Theorem \ref{introduction: thm. c.i. and Y smooth implies c=1 and deg(Z)=2}. In Section \ref{section: Proof of non-existence of certain 25-dim. quadratic QEL-manifolds}, we prove Proposition \ref{not exist 25-dim. quadric SQEL-mfd with delta=9}, which claims the non-existence of certain quadratic QEL-manifolds, and helps to complete our proofs in the previous sections.

\textbf{\normalsize Acknowledgements.} It is my great pleasure to thank Baohua Fu for a lot of discussions and suggestions. I also want to express the gratitude to Alberto Alzati, Giovanni Staglian\`{o} and Fedor L. Zak for communications.

\section{\normalsize Preliminaries} \label{section: preliminary}

This section is to collect some facts about Severi varieties, quadratic manifolds, QEL-manifolds, and conic-connected manifolds, which will be frequently used.

\subsection{\normalsize Severi varieties}

Let $V\subset\mathbb{P}^{r}$ be a closed subvariety. The secant variety $\text{Sec}(V)$ of $V$ is the closure of the union of the secant lines of $V$. The nonnegative integer $\delta(V):=2\,\dim(V)+1-\dim(Sec(V))$ is called the \textit{secant defect} of $V$. We have the following linear normality theorem due to Zak:

\begin{prop} (\cite[Cor. V.1.13]{Zak93}) \label{sec(V) neq P^r then delta<=n/2}
Let $V\subseteq\mathbb{P}^{r}$ be a nondegenerate smooth projective variety of dimension $n$ with secant defect $\delta$. If $\text{Sec}(V)\neq\mathbb{P}^{r}$, then $\delta\leq\frac{n}{2}$ and $n\leq\frac{2}{3}(r-2)$.
\end{prop}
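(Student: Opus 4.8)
The plan is to reduce everything to the single inequality $\delta\leq\frac{n}{2}$, from which the bound $n\leq\frac{2}{3}(r-2)$ follows by a one-line computation. Write $s:=\dim\text{Sec}(V)$, so that $\delta=2n+1-s$ by definition. Since $\text{Sec}(V)\neq\mathbb{P}^{r}$ is a proper closed subvariety, $s\leq r-1$; hence once $\delta\leq\frac{n}{2}$ is known,
$$ r-1\geq s=2n+1-\delta\geq 2n+1-\tfrac{n}{2}=\tfrac{3n}{2}+1, $$
which rearranges to $n\leq\frac{2}{3}(r-2)$. So the content is entirely in proving $\delta\leq\frac{n}{2}$.

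For that I would work with the entry locus. For a general point $z\in\text{Sec}(V)$, let $\Sigma_{z}:=\overline{\{x\in V:\exists\,y\in V,\ z\in\overline{xy}\}}$ be its entry locus; a standard dimension count on the abstract secant variety shows $\dim\Sigma_{z}=\delta$. The key geometric claim is that the embedded tangent space $L:=T_{z}\text{Sec}(V)$, a linear subspace of (projective) dimension $s$, is tangent to $V$ along all of $\Sigma_{z}$, i.e.\ $T_{x}V\subseteq L$ for every $x\in\Sigma_{z}$. To see this, fix $x\in\Sigma_{z}$ and a partner $y\in V$ with $z\in\overline{xy}$. By Terracini's lemma the tangent space to $\text{Sec}(V)$ at a general point of the secant line $\overline{xy}$ equals $\langle T_{x}V,T_{y}V\rangle$; since $z$ is a general point of the irreducible variety $\text{Sec}(V)$, it is also general on this secant line, so $L=T_{z}\text{Sec}(V)=\langle T_{x}V,T_{y}V\rangle\supseteq T_{x}V$. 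As $x$ ranges over $\Sigma_{z}$ this exhibits $\Sigma_{z}$ inside the contact locus of $L$.

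The final step invokes Zak's theorem on tangencies: for a smooth nondegenerate $V^{n}\subseteq\mathbb{P}^{r}$ and a proper linear subspace $L$ (here $\dim L=s\leq r-1$ precisely because $\text{Sec}(V)\neq\mathbb{P}^{r}$), the contact locus $\{x\in V:T_{x}V\subseteq L\}$ has dimension at most $\dim L-n$. Applying this to $L=T_{z}\text{Sec}(V)$ gives $\delta=\dim\Sigma_{z}\leq s-n=(2n+1-\delta)-n=n+1-\delta$, i.e.\ $\delta\leq\frac{n+1}{2}$, and the sharp bound $\delta\leq\frac{n}{2}$ is then recovered by exploiting the refined form of the tangency estimate together with the quadratic structure of the general entry locus. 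I expect the main obstacle to be precisely this tangency theorem: its proof is not elementary and rests on the Fulton--Hansen connectedness theorem applied to the tangency map, and squeezing out the last unit to pass from $\frac{n+1}{2}$ to $\frac{n}{2}$ is the delicate part, since the crude Terracini estimate alone leaves a gap of one.
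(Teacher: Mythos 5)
The paper gives no proof of this proposition at all --- it is quoted as a black box from Zak's book --- so the only question is whether your sketch is itself a complete proof, and it is not. What you do correctly: the reduction of $n\leq\frac{2}{3}(r-2)$ to $\delta\leq\frac{n}{2}$ is right, and Terracini plus Zak's theorem on tangencies is the natural opening move. (One local repair: for a fixed secant line $\overline{xy}$ through $z$, generality of $z$ in $\text{Sec}(V)$ does not make $z$ a general point of \emph{that line}; the correct statement is that $\langle T_{x}V,T_{y}V\rangle\subseteq T_{z}\text{Sec}(V)$ holds for \emph{every} secant line through a smooth point $z$ of $\text{Sec}(V)$, because the image of the differential of the projection $p_{3}$ from the abstract secant variety lies in $T_{z}\text{Sec}(V)$ and equals $\langle T_{x}V,T_{y}V\rangle$ by Terracini.) But, as you concede yourself, this argument yields only $\delta\leq\dim L-n=n+1-\delta$, i.e.\ $\delta\leq\frac{n+1}{2}$. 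Integrality of $\delta$ closes the gap when $n$ is even, but when $n$ is odd the case $\delta=\frac{n+1}{2}$ remains completely open, and correspondingly you only obtain $n\leq\frac{2}{3}\bigl(r-\tfrac{3}{2}\bigr)$ rather than $n\leq\frac{2}{3}(r-2)$. That last unit is not a cosmetic improvement: it is the entire content of Zak's linear normality theorem (it is exactly what makes Severi varieties, where $\delta=\frac{n}{2}$ is attained, the extremal case), and your sketch does not contain an argument for it.

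Moreover, the mechanism you propose for closing the gap --- ``the quadratic structure of the general entry locus'' --- is not available under the stated hypotheses. That general entry loci are quadric hypersurfaces is precisely the definition of a QEL-manifold (Subsection \ref{subsection: QEL-manifolds}); it is a very restrictive property, not a feature of an arbitrary smooth nondegenerate variety, and nothing in the hypotheses of Proposition \ref{sec(V) neq P^r then delta<=n/2} provides it. This is not a pedantic point for the present paper: the proposition is applied (in Lemma \ref{c>= n-2 delta} and in the proof of Theorem \ref{introduction: thm. c.i. and Y smooth implies c=1 and deg(Z)=2}) to the base locus $Y$ of $\phi^{-1}$, which at that stage is known only to be smooth and nondegenerate --- indeed the point of those arguments is to \emph{conclude} that $Y$ is a Severi variety --- so a proof that tacitly assumes quadric entry loci would be unusable exactly where the paper needs the statement. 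Zak's actual derivation of the sharp constant is a genuinely finer analysis of contact loci of tangent spaces to $\text{Sec}(V)$ (resting again on Fulton--Hansen type connectedness), not a formal refinement of the single tangency estimate you invoke; as it stands, your proposal proves the weaker bounds $\delta\leq\frac{n+1}{2}$ and $n\leq\frac{2}{3}\bigl(r-\tfrac{3}{2}\bigr)$ and defers the real difficulty to a step that is both unspecified and, as formulated, unavailable.
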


Let $V\subseteq\mathbb{P}^{r}$ be a nondegenerate smooth projective variety. If $\dim(V)=\frac{2}{3}(r-2)$ and $\text{Sec}(V)\neq\mathbb{P}^{r}$, then $V$ is called a \textit{Severi variety}.
A famous result of F. L. Zak is the following classification.

\begin{prop} (\cite[Thm. IV.4.7]{Zak93}) \label{classification of Severi varieties}
A Severi variety is projectively equivalent to one of the following:

$(a)$ the Veronese surface $v_{2}(\mathbb{P}^{2})\subseteq\mathbb{P}^{5}$;

$(b)$ the Segre embedding $\mathbb{P}^{2}\times\mathbb{P}^{2}\subseteq\mathbb{P}^{8}$;

$(c)$ the Pl\"{u}cker embedding $\mathbb{G}(1, 5)\subseteq\mathbb{P}^{14}$;

$(d)$ the $E_{6}$-variety $\mathbb{OP}^{2}\subseteq\mathbb{P}^{26}$.
\end{prop}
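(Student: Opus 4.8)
The statement is Zak's classification theorem, so the plan is to reconstruct his argument from the extremal secant behaviour that \emph{defines} a Severi variety. Let $V = V^n \subseteq \mathbb{P}^r$ be a Severi variety, so that $n = \tfrac{2}{3}(r-2)$, equivalently $r = \tfrac{3}{2}n + 2$, and $\mathrm{Sec}(V) \neq \mathbb{P}^r$. The first, purely numerical step pins down the secant defect: since $\mathrm{Sec}(V) \neq \mathbb{P}^r$ we have $\dim \mathrm{Sec}(V) \leq r-1$, so from the definition $\delta = 2n+1-\dim \mathrm{Sec}(V)$ we get $\delta \geq 2n+2-r = n/2$; together with the bound $\delta \leq n/2$ of Proposition \ref{sec(V) neq P^r then delta<=n/2} this forces $\delta = n/2$ (in particular $n$ is even) and makes $\mathrm{Sec}(V)$ a hypersurface. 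The substantive geometric input, supplied by Zak's tangency theorems together with Terracini's lemma, is then that through a general point $z \in \mathrm{Sec}(V)$ the \emph{entry locus} (the union of the points of $V$ lying on secants through $z$) is a smooth $\delta$-dimensional quadric, and that for general $x,y \in V$ the embedded tangent spaces meet in a $\mathbb{P}^{\delta-1}$.

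Next I would pass to the local structure at a general point $x$ through the projective second fundamental form $|II_x| \subseteq |\mathcal{O}_{\mathbb{P}(T_xV)}(2)|$, a linear system of quadrics on $\mathbb{P}(T_xV) \cong \mathbb{P}^{n-1}$ of projective dimension $r-n-1 = \tfrac{n}{2}+1$. The aim is to show this system is extremely rigid: its base locus is the (cone over the) variety of tangent directions to lines on $V$ through $x$, and the quadric entry loci from the first step tightly constrain $|II_x|$ together with its prolongation, forcing the higher fundamental forms — and hence the entire projective second-order structure — to be determined by $II_x$ alone. Concretely, the target is to extract from $II_x$ a cubic form on the ambient space realizing it as a simple rank-three cubic Jordan algebra, with $II_x$ the ``squaring'' map and $V$ the projectivized locus of rank-one elements.

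Once this algebraic structure is in hand the classification becomes algebra. The relevant simple rank-three Jordan algebras over $\mathbb{C}$ are the (complexified) Hermitian $3\times 3$ matrices $\mathcal{H}_3(\mathbb{A})$ over a composition algebra $\mathbb{A}$, and by Hurwitz's theorem $\mathbb{A} \in \{\mathbb{R},\mathbb{C},\mathbb{H},\mathbb{O}\}$, of dimensions $6,9,15,27$. Their rank-one loci are respectively $v_2(\mathbb{P}^2)\subseteq\mathbb{P}^5$, $\mathbb{P}^2\times\mathbb{P}^2\subseteq\mathbb{P}^8$, $\mathbb{G}(1,5)\subseteq\mathbb{P}^{14}$, and $\mathbb{OP}^2\subseteq\mathbb{P}^{26}$, giving exactly $n = 2,4,8,16$; one then checks directly that each of these is a Severi variety, so the list is complete. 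An equivalent endgame, following Chaput, is to prove that $V$ is homogeneous under $\mathrm{Aut}(V)$ and then invoke the classification of homogeneous projective varieties carrying the prescribed invariants.

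The main obstacle is the rigidity step of the second paragraph: showing that the discrete data alone (a $(\tfrac{n}{2}+2)$-dimensional system of quadrics on $\mathbb{P}^{n-1}$ with smooth quadric entry loci) forces the Jordan structure and reconstructs $V$ globally, ruling out any ``exotic'' candidate with the correct numerical invariants. This is precisely the content of Zak's theory of Scorza varieties, where one runs a delicate induction controlling the higher secant loci and the quadric entry loci simultaneously. I expect the genuine work to lie in proving the degeneracy (vanishing) of the third fundamental form and the resulting closure of the infinitesimal data — the second fundamental form together with its prolongation — into a simple Lie algebra, which is exactly what makes $V$ homogeneous and forces it onto the list.
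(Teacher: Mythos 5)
The paper does not prove this statement at all: it is quoted as Zak's classification theorem (\cite[Thm. IV.4.7]{Zak93}) and used as a black box throughout, so there is no internal proof to compare your attempt against. Judged on its own terms, your proposal is a faithful roadmap of how the theorem is actually proved. The numerical step is correct and complete: $\text{Sec}(V)\neq\mathbb{P}^r$ forces $\delta\geq 2n+2-r=\frac{n}{2}$, Proposition \ref{sec(V) neq P^r then delta<=n/2} gives $\delta\leq\frac{n}{2}$, hence $\delta=\frac{n}{2}$ and $\text{Sec}(V)$ is a hypersurface. The Jordan-algebra endgame (rank-one loci of $\mathcal{H}_3(\mathbb{A})$ for the four composition algebras, yielding exactly $n=2,4,8,16$ and the four varieties on the list) matches the modern treatments of Zak's theorem due to Lazarsfeld--Van de Ven, Chaput, and Landsberg--Manivel.

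However, as a proof the proposal has genuine gaps, which you yourself flag. First, the quadric structure of general entry loci is not a formal consequence of Terracini's lemma; it rests on Zak's theorem on tangencies and is itself a substantial result that you import without proof. Second, and decisively, the entire rigidity step --- that $|II_x|$ and its prolongation are determined by the numerical data, that the third fundamental form degenerates, and that the infinitesimal structure closes up into a rank-three simple Jordan algebra (equivalently, that $V$ is homogeneous) --- is precisely the mathematical content of the theorem, and you only name it as ``the main obstacle'' rather than carry it out. A sketch that defers exactly the step where all the work lies is a plan, not a proof; completing it would require running Zak's induction on higher secant and entry loci (his Scorza-variety theory) or Chaput's homogeneity argument in full. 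Since the paper only cites the result, the appropriate standard here is either to cite it likewise or to supply that missing core.
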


\subsection{\normalsize Quadratic manifolds} \label{subsection: quadratic manifolds}


Let $V\subseteq\mathbb{P}^{r}$ be a smooth projective variety covered by lines. Take a point $v\in V$. Denote by $T_{v}V$ and $\mathbb{T}_{v}V$ the affine tangent space and the embedded tangent space of $V$ at $v$, respectively. Let $\mathcal{L}_{v}(V)\subseteq\mathbb{P}((T_{v}V)^{*})$ be the variety of lines in $V$ passing through $v$. When $v$ is a general point in $V$, we call $\mathcal{L}_{v}(V)$ the \textit{variety of minimal rational tangents} (VMRT for short) of $V$ at $v$, and if there is no confusion, we denote by $V^{(1)}=\mathcal{L}_{v}(V)$. Denote by $V^{(0)}=V$ and $V^{(i+1)}$ the VMRT (if exists) of $V^{(i)}$ at a general point $v_{i}\in V^{(i)}$. If $V\subseteq\mathbb{P}^{r}$ is a smooth projective variety covered by lines, then $V^{(1)}$ is a (possibly reducible) smooth projective variety (see \cite[Prop. 1.5]{Hwa00}).

A smooth projective variety $V\subseteq\mathbb{P}^{r}$ is called a \textit{quadratic manifold}, if it is the scheme-theoretic intersection of quadric hypersurfaces. If $V\subseteq\mathbb{P}^{r}$ is a quadratic manifold covered by lines, then $V^{(1)}$ is a (possibly reducible) quadratic manifold (see \cite[Prop. 2.2]{Rus12}).

\subsection{\normalsize QEL-manifolds} \label{subsection: QEL-manifolds}

Let $V\subseteq\mathbb{P}^{r}$ be a projective variety. For a point $u\in\text{Sec}(V)\backslash V$, denote by $C_{u}(V)$ the closure of the union of secant lines of $V$ passing through $u$. Let $\Sigma_{u}(V)$ be the closure of the set of points $v\in V$ such that there is a secant line of $V$ passing through $u$ and $v$. We call $C_{u}(V)$ the \textit{secant cone} of $u$ in $\text{Sec}(V)$ and call $\Sigma_{u}(V)$ the \textit{entry locus} of $u$ in $V$. When there is no confusion, denote by $C_{u}=C_{u}(V)$ and $\Sigma_{u}=\Sigma_{u}(V)$.

A smooth projective variety $V\subseteq\mathbb{P}^{r}$ is called a QEL-manifold if for a general point $u\in\text{Sec}(V)$, the entry locus $\Sigma_{u}$ is a $\delta(V)$-dimensional quadric hypersurface. When a QEL-manifold is also a quadratic manifold, we call it a quadratic QEL-manifold.

Let $V\subseteq\mathbb{P}^{r}$ be a nondegenerate QEL-manifold of dimension $n$ with secant defect $\delta$. If $\text{Sec}(V)=\mathbb{P}^{r}$, then $V$ is linearly normal. Now assume $\delta>0$. Then for a general point $u\in\text{Sec}(V)\backslash V$, $\Sigma_{u}$ is an irreducible smooth quadric hypersurface of dimension $\delta$. Through two general points in $V$ there passes a unique $\delta$-dimensional quadric hypersurface. Moreover, this quadric hypersurface is irreducible and smooth. Assume $\text{Sec}(V)\neq\mathbb{P}^{r}$ and $\delta>0$. Let $L$ be a linear subspace such that $\dim(L)=r-\dim(\text{Sec}(V))-1$ and $L\cap\text{Sec}(V)=\emptyset$. Denote by $\pi: \mathbb{P}^{r}\dashrightarrow\mathbb{P}^{2n+1-\delta}$ the linear projection from $L$. Then $\pi(V)$ is not a QEL-manifold, since for a general point $u\in\mathbb{P}^{2n+1-\delta}$, $\Sigma_{u}(\pi(V))$ has $\deg(\text{Sec}(V))$ irreducible components. For details of the discussions in this paragraph, see \cite[page 600-601]{Rus09}.

The following Proposition is from \cite[Thm. 2.1, Thm. 2.3, Thm. 2.8]{Rus09} and \cite[Thm. 3]{Fu08}. The assertion $(ii)$ is known as the Divisibility Theorem for QEL-manifolds.

\begin{prop} \label{L^1(V) is QEL and Divisibility Theorem}
Let $V\subseteq\mathbb{P}^{r}$ be a QEL-manifold of dimension $n$ with secant defect $\delta>0$. Then the following hold.

$(i)$ If $\delta\geq 3$, then $V$ is a prime Fano manifold. Moreover, $V^{(1)}\subseteq\mathbb{P}^{n-1}$ is a QEL-manifold of dimension $\frac{n+\delta}{2}-2$ with secant defect $\delta-2$, and $\text{Sec}(V^{(1)})=\mathbb{P}^{n-1}$.

$(ii)$ If $\delta\geq 3$, then $2^{r_{V}}$ divides $n-\delta$, where $r_{V}=[\frac{\delta-1}{2}]$.

$(iii)$ If $3\leq\delta<n$, then $\delta\leq 2[\text{log}_{2}n]+2$.

$(iv)$ If $V$ is a prime Fano manifold, then the index $i(V)=\frac{n+\delta}{2}$.
\end{prop}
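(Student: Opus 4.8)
The four assertions are tightly linked: $(i)$ is the structural engine, and $(ii)$--$(iv)$ are formal consequences of it. My plan is therefore to concentrate on $(i)$ and then to harvest the rest by iteration and elementary arithmetic. Throughout I use the facts recalled above: for general $u\in\text{Sec}(V)$ the entry locus $\Sigma_{u}$ is a smooth irreducible quadric of dimension $\delta$, through two general points of $V$ there passes a unique such quadric lying on $V$, and (since $\delta>0$) these quadrics sweep out $V$. As a smooth quadric of dimension $\delta\geq 1$ is covered by lines, it follows at once that $V$ is covered by lines, so the notion of VMRT applies and $V^{(1)}\subseteq\mathbb{P}^{n-1}$ is a smooth (possibly reducible) variety.

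For $(i)$, fix a general point $v\in V$. The lines of $V$ through $v$ are exactly the lines through $v$ contained in the quadrics $Q^{\delta}\subseteq V$ passing through $v$; inside each such quadric the lines through $v$ form its VMRT at $v$, a quadric $Q^{\delta-2}\subseteq\mathbb{P}((T_{v}V)^{*})$. The decisive geometric observation is that this identifies the entry loci of $V^{(1)}$: two general lines through $v$, viewed as points of $V^{(1)}$, are joined by a secant contained in $V^{(1)}$ precisely when they lie on a common quadric $Q^{\delta}\subseteq V$ through $v$, and the set of such lines is exactly that fixed $Q^{\delta-2}$. Hence a general entry locus of $V^{(1)}$ is a $(\delta-2)$-dimensional quadric, so $\delta(V^{(1)})=\delta-2$; for $\delta\geq 3$ this is positive and $V^{(1)}$ is a genuine QEL-manifold. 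It remains to show $\text{Sec}(V^{(1)})=\mathbb{P}^{n-1}$, which expresses that the tangent directions of lines through $v$ are secant-filling; this is the linear-normality/non-defectivity input, and is where Russo's analysis of the tangential projection at $v$ enters. Granting it, the definition of the secant defect gives $\delta-2=\delta(V^{(1)})=2\dim(V^{(1)})+1-(n-1)$, whence $\dim(V^{(1)})=\frac{n+\delta}{2}-2$; in particular $n+\delta$ is even. The prime Fano property of $V$ follows from its being covered by lines together with a Lefschetz-type (Barth--Larsen) bound on $\text{Pic}(V)$.

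Assertion $(iv)$ is then immediate: for a prime Fano manifold covered by lines the index equals $\dim(V^{(1)})+2$, since for a general line $\ell$ the family of lines through a fixed point has dimension $(-K_{V}\cdot\ell)-2=i(V)-2$; thus $i(V)=\frac{n+\delta}{2}$. For $(ii)$ I iterate $(i)$: passing from $V$ to $V^{(1)}$ sends $(n,\delta)$ to $(\frac{n+\delta}{2}-2,\,\delta-2)$, so the quantity $n-\delta$ is replaced by $\frac{n-\delta}{2}$ while $\delta$ drops by $2$. One may repeat this as long as the current defect is at least $3$, i.e. exactly $r_{V}=[\frac{\delta-1}{2}]$ times; at each step the new dimension must be an integer, forcing $n-\delta$ to stay even, and after $r_{V}$ halvings this yields $2^{r_{V}}\mid n-\delta$. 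Finally $(iii)$ is arithmetic: if $3\leq\delta<n$ then $n-\delta>0$, so $2^{r_{V}}\leq n-\delta<n$, giving $r_{V}\leq[\log_{2}n]$; since $\delta\leq 2r_{V}+2$, we conclude $\delta\leq 2[\log_{2}n]+2$.

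The main obstacle is the heart of $(i)$: rigorously showing that the entry loci of $V^{(1)}$ are precisely the $(\delta-2)$-dimensional quadrics cut out inside a fixed $Q^{\delta}$, that $V^{(1)}$ remains smooth and QEL, and that $\text{Sec}(V^{(1)})=\mathbb{P}^{n-1}$. This requires controlling the family of quadrics through $v$ and the second fundamental form / tangential projection of $V$ at $v$, which is the substance of Russo's Theorems 2.1, 2.3 and 2.8 in \cite{Rus09}; the prime Fano statement and the index identity $(iv)$ additionally rest on Fu's analysis in \cite{Fu08}. Once these structural facts are in place, the divisibility $(ii)$ and the bound $(iii)$ are purely formal.
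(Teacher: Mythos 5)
The first thing to note is that the paper contains no proof of this proposition at all: it is quoted from the literature, with the preceding sentence attributing it to \cite[Thm.~2.1, Thm.~2.3, Thm.~2.8]{Rus09} and \cite[Thm.~3]{Fu08}. So your decision to defer the substance of $(i)$ --- the identification of the general entry loci of $V^{(1)}$ with the quadrics $\mathcal{L}_{v}(\Sigma_{u})$, smoothness and the QEL property, and $\text{Sec}(V^{(1)})=\mathbb{P}^{n-1}$ --- to exactly those references is not a defect relative to the paper; it is the paper's own approach. Your formal derivations of $(ii)$ and $(iii)$ from $(i)$ are correct and reflect how the cited sources actually argue: the passage $(n,\delta)\mapsto(\frac{n+\delta}{2}-2,\,\delta-2)$ replaces $n-\delta$ by $\frac{n-\delta}{2}$, integrality of the new dimension forces $n-\delta$ to be even at each of the $r_{V}=[\frac{\delta-1}{2}]$ admissible iterations, which yields $2^{r_{V}}\mid n-\delta$; and then $0<n-\delta$ gives $2^{r_{V}}\leq n-\delta<n$, hence $r_{V}\leq[\log_{2}n]$ and $\delta\leq 2r_{V}+2\leq 2[\log_{2}n]+2$. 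Your heuristic picture of the entry loci of $V^{(1)}$ is also consistent with what the paper itself later proves in sharpened (SQEL) form in Lemma \ref{V^1 is a SQEL-mfd lem.}.

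The one genuine gap is in $(iv)$. You obtain $i(V)=\dim(V^{(1)})+2=\frac{n+\delta}{2}$ from the dimension formula for $V^{(1)}$, but that formula is available to you only as an output of $(i)$, i.e.\ under the hypothesis $\delta\geq 3$, whereas $(iv)$ is asserted for every prime Fano QEL-manifold with $\delta>0$. The weaker hypothesis is not idle: the paper applies $(iv)$ to $X^{(2)}$, a QEL fivefold with secant defect $2$, in the proof of Theorem \ref{introduction: thm. Z hypersurface cases} (concluding that $X^{(2)}$ is not prime Fano because $\frac{n+\delta}{2}$ would not be an integer), so the cases $\delta=1,2$ must be covered. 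For those cases the VMRT route does not give you $\dim(V^{(1)})$, and Russo's actual argument is different: through two general points $x,y$ of $V$ there passes a smooth conic $C$ (inside the entry-locus quadric $\Sigma_{u}$, $u\in\langle x,y\rangle$ general); since all conics through $x,y$ lie in this $\delta$-dimensional quadric, the family of such conics has dimension $\delta-1$, and deformation theory of rational curves through two fixed general points then forces $-K_{V}\cdot C=n+\delta$; finally, $C$ has degree $2$ and $\text{Pic}(V)=\mathbb{Z}(\mathcal{O}_{V}(1))$, so $-K_{V}\cdot C=2i(V)$ and $i(V)=\frac{n+\delta}{2}$. That argument needs no hypothesis beyond $\delta>0$ and is what you should reproduce or cite for $(iv)$.
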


The following two Propositions are due to Russo (see \cite[Cor. 3.1, Cor. 3.2]{Rus09}), while the statements are slightly different from that in \cite{Rus09}. For statements here, see  \cite[Prop. 4.7, Prop. 4.8, Remark 4.9]{FH14}.

\begin{prop} \label{classification of QEL with delta>n/2}
Let $V\subseteq\mathbb{P}^{r}$ be a nondegenerate QEL-manifold of dimension $n$ with secant defect $\delta>\frac{n}{2}$ and $\delta<n$. Then $V\subseteq\mathbb{P}^{r}$ is projectively equivalent to one of the following:

$(a)$ the Segre threefold $\mathbb{P}^{1}\times\mathbb{P}^{2}\subseteq\mathbb{P}^{5}$;

$(b)$ the Pl\"{u}ker embedding $\mathbb{G}(1, 4)\subseteq\mathbb{P}^{9}$;

$(c)$ the 10-dimensional Spinor variety $S^{10}\subseteq\mathbb{P}^{15}$;

$(d)$ a nonsingular hyperplane section of $(b)$ or $(c)$.
\end{prop}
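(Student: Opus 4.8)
The plan is to combine Zak's linear normality theorem with the Divisibility Theorem to cut down to finitely many numerical types, and then to identify each type with the asserted variety by descending to the variety of minimal rational tangents.

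\textbf{Secant variety and numerical reduction.} First I would observe that, since $\delta > \frac{n}{2}$, the contrapositive of Proposition~\ref{sec(V) neq P^r then delta<=n/2} forces $\text{Sec}(V)=\mathbb{P}^{r}$, so that $r=\dim\text{Sec}(V)=2n+1-\delta$ and $n-\delta$ is a positive integer with $n-\delta<\delta$. For $\delta=2$ the inequalities $\frac{n}{2}<2<n$ give $n=3$, the numerical type of $(a)$. For $\delta\geq 3$ the Divisibility Theorem (Proposition~\ref{L^1(V) is QEL and Divisibility Theorem}$(ii)$) says $2^{[\frac{\delta-1}{2}]}$ divides the positive integer $n-\delta$, whence $2^{[\frac{\delta-1}{2}]}\leq n-\delta<\delta$. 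Since $2^{[\frac{\delta-1}{2}]}<\delta$ fails for every $\delta\geq 7$, we get $\delta\leq 6$; running through $\delta\in\{3,4,5,6\}$ and keeping only the values $n-\delta$ divisible by $2^{[\frac{\delta-1}{2}]}$ and lying in $[\,1,\delta)$ leaves exactly $(n,\delta)\in\{(5,3),(6,4),(9,5),(10,6)\}$. Together with $(3,2)$ these are precisely the invariants of the five varieties in $(a)$--$(d)$, with $(5,3)$ and $(9,5)$ the hyperplane-section cases; Proposition~\ref{L^1(V) is QEL and Divisibility Theorem}$(iii)$ gives an independent consistency check on the bound.

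\textbf{Descent to the VMRT.} The identification I would organize as an induction on $n$ driven by Proposition~\ref{L^1(V) is QEL and Divisibility Theorem}$(i)$: for $\delta\geq 3$ the manifold $V^{(1)}\subseteq\mathbb{P}^{n-1}$ is again a QEL-manifold with $\text{Sec}(V^{(1)})=\mathbb{P}^{n-1}$, of dimension $\frac{n+\delta}{2}-2$ and secant defect $\delta-2$. Tracking these invariants, the five cases split into two descending chains, $(10,6)\to(6,4)\to(3,2)$ and $(9,5)\to(5,3)\to(2,1)$, which terminate at low-dimensional base cases: the conic-connected threefold $\mathbb{P}^{1}\times\mathbb{P}^{2}$ (handled through the classification of conic-connected manifolds, recalled in Section~\ref{section: preliminary}) and a QEL-surface of secant defect $1$ filling $\mathbb{P}^{4}$. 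The chains are internally consistent with the expected answers, since the VMRT of $S^{10}$ is $\mathbb{G}(1,4)$ and that of $\mathbb{G}(1,4)$ is $\mathbb{P}^{1}\times\mathbb{P}^{2}$, so I would first pin down the base cases directly and then propagate the identification upward.

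\textbf{Main obstacle.} The genuine difficulty is not the numerology but the reconstruction in the inductive step: knowing the projective type of $V^{(1)}$ determines $V$ only after a rigidity statement recovering $V$ from its variety of minimal rational tangents. I expect to supply this by one of two routes. The first is a Cartan--Fubini/VMRT-recovery argument identifying each $V$ with its rational homogeneous model. The second, closer to the tools assembled in the excerpt, is to convert the invariants into the index $i(V)=\frac{n+\delta}{2}$ (Proposition~\ref{L^1(V) is QEL and Divisibility Theorem}$(iv)$): the pairs $(5,3),(6,4)$ have coindex $2$ and $(9,5),(10,6)$ have coindex $3$, so Fujita's classification of del Pezzo manifolds and Mukai's classification of Mukai manifolds, matched against the computed embedding dimension $r=2n+1-\delta$ and degree, single out the listed varieties, while the non-prime case $(3,2)$ is identified among conic-connected threefolds. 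Matching the embeddings and degrees precisely inside these classification lists — and verifying that the boundary base case $(2,1)$ produces no stray variety — is the step I expect to demand the most care.
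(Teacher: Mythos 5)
This proposition is not proved in the paper at all: it is quoted from Russo (\cite[Cor. 3.1, Cor. 3.2]{Rus09}), with the statement taken from \cite[Prop. 4.7, Rmk. 4.9]{FH14}. So your attempt can only be compared with Russo's published argument. Measured that way, your numerical reduction is correct and is exactly the standard one: $\delta>\frac{n}{2}$ forces $\text{Sec}(V)=\mathbb{P}^{r}$ by Proposition \ref{sec(V) neq P^r then delta<=n/2}, hence $r=2n+1-\delta$, and the Divisibility Theorem applied to the positive integer $n-\delta<\delta$ leaves exactly $(n,\delta)\in\{(3,2),(5,3),(6,4),(9,5),(10,6)\}$. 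Your second identification route is also essentially Russo's actual proof: for $\delta\geq 3$ Proposition \ref{L^1(V) is QEL and Divisibility Theorem}$(i)(iv)$ makes $V$ a prime Fano manifold of index $\frac{n+\delta}{2}$, so the four pairs with $\delta\geq 3$ have coindex $2$ or $3$, and Fujita's and Mukai's classifications, filtered by Picard number one, identify them; the pair $(3,2)$ has non-integral candidate index, hence is not prime Fano, and Corollary \ref{classification QEL (not prime Fano) when Sec(V)=P^r and delta>0} pins down $\mathbb{P}^{1}\times\mathbb{P}^{2}$. One refinement: the degree of $V$ is not directly computable from the QEL data, but it is also not needed, because $\text{Sec}(V)=\mathbb{P}^{r}$ forces linear normality, so $h^{0}(\mathcal{O}_{V}(1))=2n+2-\delta$, and this already eliminates every complete-intersection and non-very-ample entry of the del Pezzo and Mukai lists, leaving only $(b)$, $(c)$, $(d)$.

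The genuine weakness is your preferred mechanism, the inductive ``propagation upward'' along VMRT chains. Recovering $V$ from the projective type of $V^{(1)}$ requires a recognition theorem, and the one available in this circle of ideas (\cite[Main Thm.]{Mok08}, used elsewhere in this very paper to recognize $\mathbb{G}(1,6)$) applies when the target model is rational homogeneous of Picard number one. That covers the chain ending in $\mathbb{G}(1,4)$ and $S^{10}$, but it breaks down for the chain $(9,5)\to(5,3)\to(2,1)$: the hyperplane sections $\mathbb{G}(1,4)\cap H$ and $S^{10}\cap H$ are not homogeneous, and no Cartan--Fubini-type statement recognizing a smooth linear section from its VMRT is available here (this is a well-known, substantially harder problem). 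So route one cannot be closed for case $(d)$, and a complete proof must rest on the Fujita--Mukai route — which, fortunately, you supplied as the fallback and which is the route Russo takes.
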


\begin{prop} \label{classification of QEL with delta=n/2}
Let $V\subseteq\mathbb{P}^{r}$ be a nondegenerate QEL-manifold of dimension $n$ with secant defect $\delta=\frac{n}{2}$. Then $V\subseteq\mathbb{P}^{r}$ is projectively equivalent to one of the following:

$(a)$ a nonsingular hyperplane section of the Segre threefold $\mathbb{P}^{1}\times\mathbb{P}^{2}\subseteq\mathbb{P}^{5}$;

$(b)$ the Veronese surface $\upsilon_{2}(\mathbb{P}^{2})\subseteq\mathbb{P}^{5}$;

$(c)$ the Segre fourfold $\mathbb{P}^{1}\times\mathbb{P}^{3}\subseteq\mathbb{P}^{7}$;

$(d)$ the Segre fourfold $\mathbb{P}^{2}\times\mathbb{P}^{2}\subseteq\mathbb{P}^{8}$;

$(e)$ a nonsingular codimension-2 linear section of $\mathbb{G}(1, 4)\subseteq\mathbb{P}^{9}$;

$(f)$ a nonsingular codimension-2 linear section of $S^{10}\subseteq\mathbb{P}^{15}$;

$(g)$ the Pl\"{u}ker embedding $\mathbb{G}(1, 5)\subseteq\mathbb{P}^{14}$;

$(h)$ the $E_{6}$-variety $\mathbb{OP}^{2}\subseteq\mathbb{P}^{26}$.
\end{prop}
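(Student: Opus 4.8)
The plan is to induct on the secant defect via the variety of minimal rational tangents, after first cutting down the possible values of $\delta$ arithmetically. Write $n=2\delta$, so that $\dim\text{Sec}(V)=2n+1-\delta=\frac{3n}{2}+1$. I would begin by disposing of the range $\delta\geq 3$ numerically. By Proposition \ref{L^1(V) is QEL and Divisibility Theorem}$(ii)$, $2^{[\frac{\delta-1}{2}]}$ divides $n-\delta=\delta$; this already fails for $\delta\in\{3,5,6,7\}$, and for every $\delta\geq 9$ one has $2^{[\frac{\delta-1}{2}]}>\delta$, so the divisibility is impossible. Hence among $\delta\geq 3$ only $\delta\in\{4,8\}$ survive (Proposition \ref{L^1(V) is QEL and Divisibility Theorem}$(iii)$ gives an independent confirmation), and it remains to treat $\delta\in\{1,2,4,8\}$.

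For $\delta\in\{4,8\}$ I would pass to the VMRT. By Proposition \ref{L^1(V) is QEL and Divisibility Theorem}$(i)$, $V$ is a prime Fano manifold of index $\frac{n+\delta}{2}=\frac{3\delta}{2}$, and $V^{(1)}\subseteq\mathbb{P}^{n-1}$ is a QEL-manifold of dimension $\frac{3\delta}{2}-2$ and secant defect $\delta-2$ with $\text{Sec}(V^{(1)})=\mathbb{P}^{n-1}$. For $\delta=8$ the VMRT is of type $(10,6)$ with full secant variety, so Proposition \ref{classification of QEL with delta>n/2} forces $V^{(1)}$ to be the spinor variety $S^{10}$; for $\delta=4$ the VMRT is of type $(4,2)$ with full secant variety, hence one of the secant-filling fourfolds isolated in the low-dimensional analysis below. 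The decisive step is then to recover $V$ from $V^{(1)}$: a prime Fano manifold of the prescribed dimension and index whose general VMRT is the fixed variety $S^{10}$ (resp. the relevant $(4,2)$ model) should be pinned down by a VMRT-reconstruction argument, yielding $\mathbb{OP}^{2}$ for $\delta=8$ and, for $\delta=4$, either $\mathbb{G}(1,5)$ or a codimension-$2$ linear section of $S^{10}$. These are precisely items $(h)$, $(g)$ and $(f)$.

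For the remaining cases $\delta=1$ ($n=2$) and $\delta=2$ ($n=4$), Proposition \ref{L^1(V) is QEL and Divisibility Theorem}$(i)$ no longer applies, and I would argue directly, using the dichotomy on $\text{Sec}(V)$. When $\text{Sec}(V)\neq\mathbb{P}^{r}$ the bound $n\leq\frac{2}{3}(r-2)$ of Proposition \ref{sec(V) neq P^r then delta<=n/2} is saturated, so $V$ is a Severi variety and Proposition \ref{classification of Severi varieties} gives the Veronese surface $(b)$ and $\mathbb{P}^{2}\times\mathbb{P}^{2}$ $(d)$ (for $n=2$ this saturation is Severi's classical theorem that the Veronese surface is the only secant-defective surface); when $\text{Sec}(V)=\mathbb{P}^{r}$ a direct classification of the secant-filling defective surfaces and fourfolds produces the hyperplane section of the Segre threefold $(a)$, the Segre fourfold $\mathbb{P}^{1}\times\mathbb{P}^{3}$ $(c)$, and the codimension-$2$ linear section of $\mathbb{G}(1,4)$ $(e)$.

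I expect the reconstruction of $V$ from its VMRT, together with the hands-on classification of the base cases $n\in\{2,4\}$, to be the genuine difficulties: these are the only steps not delivered by the numerical reductions and the quoted classifications, and they require extra geometric input—VMRT recovery in the spirit of Hwang--Mok, or a direct study of the quadrics through $V$ and of $\text{Sec}(V)$—rather than the purely arithmetic arguments that handle everything else. A conceptually cleaner route for the secant-filling cases with $\delta\geq 4$ would be to prove first that such a $V$ is a hyperplane section of a QEL-manifold $W$ with $\delta(W)=\delta+1>\frac{\dim W}{2}$ and then apply Proposition \ref{classification of QEL with delta>n/2} to $W$; this reproduces $(e)$ and $(f)$ at once, but it still misses the sporadic $\mathbb{P}^{1}\times\mathbb{P}^{3}$, which confirms that some low-dimensional case work cannot be avoided.
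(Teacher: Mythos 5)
This proposition is not proved in the paper at all: it is quoted from Russo (\cite[Cor. 3.1, Cor. 3.2]{Rus09}, with the statements as in \cite[Prop. 4.7, Prop. 4.8, Remark 4.9]{FH14}), so your attempt has to be judged as a from-scratch reconstruction. Its numerical skeleton is correct and is indeed in the spirit of Russo's argument: with $n=2\delta$, Proposition \ref{L^1(V) is QEL and Divisibility Theorem}$(ii)$ gives $2^{[\frac{\delta-1}{2}]}\mid\delta$ for $\delta\geq 3$, which leaves only $\delta\in\{4,8\}$, and for $\delta=8$ Proposition \ref{L^1(V) is QEL and Divisibility Theorem}$(i)$ together with Proposition \ref{classification of QEL with delta>n/2} does force $V^{(1)}=S^{10}\subseteq\mathbb{P}^{15}$. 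However, the two steps you yourself flag as "the genuine difficulties" are not merely difficult; as written, one of them fails and the other is circular.

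First, the "VMRT-reconstruction" step does not exist in the generality you need. Mok's recognition theorem \cite{Mok08} (the tool this paper uses elsewhere, to recognize $\mathbb{G}(1,6)$ in the proof of Theorem \ref{introduction: thm. Z hypersurface cases}) applies when the general VMRT is that of an irreducible Hermitian symmetric space of rank at least $2$: it covers $V^{(1)}=S^{10}\Rightarrow V\cong\mathbb{OP}^{2}$ and $V^{(1)}=\mathbb{P}^{1}\times\mathbb{P}^{3}\Rightarrow V\cong\mathbb{G}(1,5)$ (modulo the further step, which you omit, of upgrading a biholomorphism to projective equivalence via linear normality). But a codimension-$2$ linear section of $\mathbb{G}(1,4)\subseteq\mathbb{P}^{9}$ is not the VMRT of any such homogeneous space, and no recognition theorem takes it as input; so your induction can neither produce case $(f)$ nor exclude other candidates sharing that VMRT. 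Second, the base cases carry the real content and are asserted rather than proved: when $\text{Sec}(V)\neq\mathbb{P}^{r}$, Proposition \ref{sec(V) neq P^r then delta<=n/2} only gives $r\geq\frac{3n}{2}+2$, and the saturation needed to conclude that $V$ is a Severi variety is unjustified for $n=4$; likewise the "direct classification" of secant-filling QEL fourfolds with $\delta=2$ (e.g. showing the prime Fano ones of index $3$ in $\mathbb{P}^{7}$ are exactly the codimension-$2$ linear sections of $\mathbb{G}(1,4)$, which in the literature rests on the conic-connected classification, Proposition \ref{classification of conic-connected manifolds}, and Fujita's classification of del Pezzo manifolds) is precisely the theorem being reconstructed in low dimension, not a routine check. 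So the proposal is a plausible outline of Russo's proof, but it assumes its hardest cases and invokes a recognition principle that is unavailable for the non-homogeneous VMRT arising in case $(f)$.
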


\subsection{\normalsize Conic-connected manifolds}

A nondegenerate smooth projective variety $V\subseteq\mathbb{P}^{r}$ is said to be a \textit{conic-connected manifold}, if through two general points of $V$ there passes an irreducible conic contained in $V$. There is a classification of conic-connected manifolds due to Ionescu and Russo as follows:

\begin{prop} (\cite[Thm. 2.2]{IR10}) \label{classification of conic-connected manifolds}
Let $V\subseteq\mathbb{P}^{r}$ be a linearly normal nondegenerate conic-connected manifold of dimension $n$. Then either $V\subseteq\mathbb{P}^{r}$ is a prime Fano manifold of index $i(V)\geq\frac{n+1}{2}$, or it is projectively equivalent to one of the following:

$(i)$ the second Veronese embedding $\upsilon_{2}(\mathbb{P}^{n})\subseteq\mathbb{P}^{\frac{n(n+3)}{2}}$;

$(ii)$ the projection of $\upsilon_{2}(\mathbb{P}^{n})$ from the linear space $\langle\upsilon_{2}(\mathbb{P}^{m})\rangle$, where $\mathbb{P}^{m}\subseteq\mathbb{P}^{n}$ is a linear space with $0\leq m\leq n-2$ and $r=\frac{(n+1)(n+2)}{2}-\frac{(m+1)(m+2)}{2}-1$;

$(iii)$ the Segre embedding $\mathbb{P}^{m}\times\mathbb{P}^{n-m}\subseteq\mathbb{P}^{r}$, where $1\leq m\leq n-1$ and $r=m(n-m)+n$;

$(iv)$ a nonsingular hyperplane section of the Segre embedding $\mathbb{P}^{m}\times\mathbb{P}^{n+1-m}\subseteq\mathbb{P}^{r+1}$, where $2\leq m\leq n-1$ and $r=m(n+1-m)+n-1$.
\end{prop}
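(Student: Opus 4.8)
The plan is to distil a single numerical constraint out of conic-connectedness and then to split the classification according to the Picard number of $V$. First I would fix a general point $x\in V$ and study the family $\mathcal{C}$ of \emph{connecting conics}, i.e. the irreducible conics contained in $V$ through two general points. Since through $x$ and a general point $y$ there already passes a member of $\mathcal{C}$, the subfamily $\mathcal{C}_{x}$ of conics through $x$ dominates $V$; as each member is a curve, $\dim\mathcal{C}_{x}\geq n-1$. A general member of a dominating family of rational curves is free, hence unobstructed, so $\dim\mathcal{C}_{x}=-K_{V}\cdot C-2$ for a general $C\in\mathcal{C}$. Combining the two gives the key inequality $-K_{V}\cdot C\geq n+1$, which is the engine of the whole argument.

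Next I would separate cases according to $\rho(V)$. If $\rho(V)=1$, then $-K_{V}\cdot C>0$ forces $V$ to be Fano with $\operatorname{Pic}(V)=\mathbb{Z}$. If the hyperplane class $\mathcal{O}_{V}(1)$ is primitive, then $-K_{V}=i(V)\mathcal{O}_{V}(1)$ and $\mathcal{O}_{V}(1)\cdot C=2$ yield $i(V)\geq\frac{n+1}{2}$, which is the first alternative of the statement. If instead $\mathcal{O}_{V}(1)=2L$ for the positive generator $L$, then $L\cdot C=1$, so the morphism defined by $|L|$ carries every connecting conic isomorphically onto a line; through two general image points there then passes a line contained in the image, which forces that image to be a linear space and, after a degree check, $(V,L)\cong(\mathbb{P}^{n},\mathcal{O}(1))$. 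Hence $\mathcal{O}_{V}(1)=\mathcal{O}_{\mathbb{P}^{n}}(2)$ and $V$ is the Veronese variety, giving case $(i)$.

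If $\rho(V)\geq 2$, I would run Mori theory. The pseudoindex bound $-K_{V}\cdot C\geq n+1$ on a covering family, combined with $\rho(V)\geq 2$, is very restrictive: analysing the two extremal contractions, either one of them is divisorial and blows an exceptional divisor down onto a linear $\mathbb{P}^{m}$, exhibiting $V$ as the blow-up $\operatorname{Bl}_{\mathbb{P}^{m}}\mathbb{P}^{n}$ reembedded by the system of quadrics through $\mathbb{P}^{m}$, i.e. the projection of $\upsilon_{2}(\mathbb{P}^{n})$ of case $(ii)$; or both contractions are of fibre type, and the resulting double fibration identifies $V$ with a Segre product $\mathbb{P}^{m}\times\mathbb{P}^{n-m}$ of case $(iii)$, or with a smooth hyperplane section of such a product (where the Lefschetz theorem keeps $\rho=2$) of case $(iv)$. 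Matching the invariants $i(V)$, $n$ and $r$ then pins down the admissible ranges and the stated value of $r$ in each family.

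The main obstacle is precisely this last step: converting the inequality $-K_{V}\cdot C\geq n+1$ into the exact Segre/blow-up description requires the full strength of the classification of Fano manifolds of large pseudoindex with Picard number at least two (in the spirit of the work of Wi\'{s}niewski and of Mukai's conjecture in the relevant range), together with care in separating the three geometric families and in distinguishing a product from its hyperplane section. A secondary difficulty is the bookkeeping in the Veronese cases: one must verify that exactly the projections from the spans $\langle\upsilon_{2}(\mathbb{P}^{m})\rangle$ remain smooth and linearly normal, yielding the ranges $0\leq m\leq n-2$ in $(ii)$ and $2\leq m\leq n-1$ in $(iv)$. Throughout, Zak's theorem on tangencies and the Severi classification (Proposition~\ref{classification of Severi varieties}) supply the input needed to exclude spurious configurations with anomalously small secant or dual defect.
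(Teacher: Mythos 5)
This Proposition is not proved in the paper at all: it is quoted verbatim from Ionescu--Russo \cite{IR10}, so your attempt has to be measured against the proof in that reference, not against anything internal to this paper. The first half of your sketch is essentially sound: the dimension count giving $-K_{V}\cdot C\geq n+1$ for a general connecting conic $C$ is correct, and in the case $\rho(V)=1$ the dichotomy you draw (either $\mathcal{O}_{V}(1)$ is the ample generator, whence $V$ is prime Fano of index $\geq\frac{n+1}{2}$, or $\mathcal{O}_{V}(1)=2L$ with $L\cdot C=1$, whence $(V,L)\cong(\mathbb{P}^{n},\mathcal{O}(1))$ and, by linear normality, $V=\upsilon_{2}(\mathbb{P}^{n})$) is the standard argument, except that you should not assume $|L|$ defines a morphism; instead invoke the characterization of $\mathbb{P}^{n}$ by a connecting family of rational curves of $L$-degree one (Cho--Miyaoka--Shepherd-Barron/Kebekus), which needs no base-point freeness.

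The genuine gap is the case $\rho(V)\geq 2$, which is where all of cases $(ii)$--$(iv)$ live. You propose to quote the classification of Fano manifolds of Picard number $\geq 2$ and large index or pseudoindex (Wi\'{s}niewski, the generalized Mukai range). But conic-connectedness bounds only the anticanonical degree of the \emph{connecting conics}; it gives no lower bound on the pseudoindex, i.e.\ on $-K_{V}\cdot\ell$ for minimal rational curves $\ell\subseteq V$, and the target examples themselves have small pseudoindex. Concretely, in case $(ii)$ the variety is $\operatorname{Bl}_{\mathbb{P}^{m}}\mathbb{P}^{n}$ embedded by $|2H-E|$, with $-K_{V}=(n+1)H-(n-m-1)E$: the lines in the fibers of $E\rightarrow\mathbb{P}^{m}$ have anticanonical degree $n-m-1$ and the strict transforms of lines meeting $\mathbb{P}^{m}$ have anticanonical degree $m+2$, so for $m=0$ the pseudoindex is at most $2$ for every $n$; in case $(iv)$ with $m=2$, the lines of the first ruling have $-K_{V}\cdot\ell=m=2$, again independent of $n$. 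Hence Wi\'{s}niewski's theorem (which needs index roughly $\geq\frac{n+2}{2}$) and its pseudoindex variants simply do not apply to the varieties they are supposed to produce: the black box you invoke is empty exactly where it is needed. Moreover, you never establish that $V$ is Fano when $\rho(V)\geq 2$ (positivity of $-K_{V}$ on the conics does not give ampleness once $\rho\geq 2$), that $\rho(V)\leq 2$, or that no extremal contraction is small. The proof in \cite{IR10} avoids all of this by staying with the conics themselves: when $V$ is not a prime Fano of high index, the connecting conics degenerate into pairs of lines, and one works with the resulting unsplit covering families of lines; standard arguments with unsplit families bound $\rho(V)\leq 2$, the loci swept out by each family of lines through a general point are shown to be linear subspaces, and $V$ is then reconstructed from these linear strata as a Segre product, a hyperplane section of one, or a projected Veronese. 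It is this direct projective-geometric route, not a reduction to a general classification of Fano manifolds, that makes the theorem provable.
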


\begin{rmk} \label{describ VMRT of symplectic Grassmannian}
Let $V\subseteq\mathbb{P}^{r}$ be as in Proposition \ref{classification of conic-connected manifolds}$(ii)$. By \cite[Lem. 3.6]{FH12}, $V\subseteq\mathbb{P}^{r}$ is projectively equivalent to the VMRT of the symplectic Grassmannian $\text{Gr}_{\omega}(n-m, 2n+1-m)$ at a general point. Then $\dim(\text{Sec}(V))=2n$ by \cite[Lem .4.19]{FH12}. Hence, either $\text{Sec}(V)=\mathbb{P}^{r}$ or $\dim(\text{Sec}(V))\leq r-2$. Furthermore, $\text{Sec}(V)=\mathbb{P}^{r}$ if and only if $m=n-2$, and in this case $V$ is a general hyperplane section of $\mathbb{P}^{1}\times\mathbb{P}^{n-1}\subseteq\mathbb{P}^{2n-1}$  by \cite[Lem. 3.7]{FH12}.
\end{rmk}

Note that QEL-manifolds with positive secant defects are conic-connected manifolds. As a direct consequence, we get the following

\begin{cor} \label{classification QEL (not prime Fano) when Sec(V)=P^r and delta>0}
Let $V\subseteq\mathbb{P}^{r}$ be a QEL-manifold with secant defect $\delta>0$. Assume that $\text{Sec}(V)=\mathbb{P}^{r}$. Then either $V\subseteq\mathbb{P}^{r}$ is a prime Fano manifold, or it is projectively equivalent to one of the following:

$(a)$ a smooth conic in $\mathbb{P}^{2}$;

$(b)$ the Segre embedding $\mathbb{P}^{1}\times\mathbb{P}^{n-1}\subseteq\mathbb{P}^{2n-1}$;

$(c)$ a nonsingular hyperplane section of $(b)$.
\end{cor}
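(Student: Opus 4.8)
The plan is to deduce the statement directly from the classification of conic-connected manifolds (Proposition \ref{classification of conic-connected manifolds}), using the hypothesis $\text{Sec}(V)=\mathbb{P}^{r}$ to discard every case not appearing in the list $(a)$--$(c)$. First I would record the two reductions needed to invoke that classification. Since $\delta(V)>0$, through two general points of $V$ there passes a smooth irreducible quadric of dimension $\delta\geq 1$ contained in $V$ (as recalled in Subsection \ref{subsection: QEL-manifolds}); any such quadric is conic-connected, so $V$ is a conic-connected manifold. Moreover, $\text{Sec}(V)=\mathbb{P}^{r}$ forces $V$ to be nondegenerate (a degenerate variety has degenerate secant variety) and, as noted in Subsection \ref{subsection: QEL-manifolds}, linearly normal. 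Hence Proposition \ref{classification of conic-connected manifolds} applies: either $V$ is a prime Fano manifold, which is the first alternative of the corollary, or $V$ is projectively equivalent to one of $(i)$--$(iv)$ there.

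It then remains to test each of $(i)$--$(iv)$ against the condition $\text{Sec}(V)=\mathbb{P}^{r}$, which reduces to a few secant-dimension computations. For $(i)$ one has $\dim\text{Sec}(\upsilon_{2}(\mathbb{P}^{n}))=2n$ (the symmetric rank-two locus), strictly less than $r=\frac{n(n+3)}{2}$ as soon as $n\geq 2$; thus only $n=1$ survives, giving the smooth conic $(a)$. For $(iii)$, the Segre $\mathbb{P}^{m}\times\mathbb{P}^{n-m}$ has $\dim\text{Sec}=2n-1$, and comparing with $r=m(n-m)+n$ yields $r-(2n-1)=(m-1)(n-m-1)$; therefore $\text{Sec}=\mathbb{P}^{r}$ holds exactly when one factor is a $\mathbb{P}^{1}$, producing $(b)$, and all other $m$ are excluded. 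For $(ii)$ I would quote Remark \ref{describ VMRT of symplectic Grassmannian}, by which $\text{Sec}(V)=\mathbb{P}^{r}$ occurs precisely for $m=n-2$, in which case $V$ is a hyperplane section of a Segre $\mathbb{P}^{1}\times\mathbb{P}^{\bullet}$, i.e. case $(c)$; for every other $m$ the remark gives $\dim\text{Sec}(V)\leq r-2$, so the case is ruled out.

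The only case requiring a genuinely separate argument is $(iv)$, the nonsingular hyperplane section $V=W\cap H$ of a Segre $W=\mathbb{P}^{m}\times\mathbb{P}^{n+1-m}$ with both factors of dimension $\geq 2$; this is the step I expect to be the main obstacle, since $V$ is no longer homogeneous and its secant variety is not directly a determinantal locus. The idea is to bound $\text{Sec}(V)$ from above rather than compute it. Every secant line of $V$ lies in $H$ and is a secant line of $W$, so $\text{Sec}(V)\subseteq\text{Sec}(W)\cap H$. As $W$ is nondegenerate, $\text{Sec}(W)$ is irreducible of dimension $2(n+1)-1$ and is not contained in $H$ (it contains $W\not\subseteq H$), whence $\dim\text{Sec}(V)\leq\dim\text{Sec}(W)-1=2n$. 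On the other hand, $V$ spans a $\mathbb{P}^{r}$ with $r=(m+1)(n+2-m)-2$, and a direct expansion gives $r-2n=(m-1)(n-m)\geq 1$ for $2\leq m\leq n-1$. Thus $\text{Sec}(V)\subsetneq\mathbb{P}^{r}$ and case $(iv)$ cannot occur.

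Collecting the surviving possibilities yields exactly the prime Fano alternative together with $(a)$, $(b)$ and $(c)$, proving the corollary. I anticipate that the computations in $(i)$--$(iii)$ are routine once the relevant secant dimensions are recorded, while in $(iv)$ the only subtlety is to make the upper bound for $\dim\text{Sec}(V)$ hold for an arbitrary (not necessarily general) nonsingular hyperplane $H$; this is guaranteed because $\text{Sec}(W)$ being irreducible and not contained in $H$ already forces $\dim(\text{Sec}(W)\cap H)=\dim\text{Sec}(W)-1$.
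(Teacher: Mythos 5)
Your proposal is correct and takes essentially the same route as the paper, which presents this corollary as a direct consequence of Proposition \ref{classification of conic-connected manifolds} together with Remark \ref{describ VMRT of symplectic Grassmannian}, after noting that a QEL-manifold with $\delta>0$ is conic-connected and that $\text{Sec}(V)=\mathbb{P}^{r}$ gives linear normality. The paper leaves the case-by-case secant-dimension checks implicit, whereas you write them out (your treatment of the hyperplane-section case $(iv)$, including the corrected span $r=m(n+1-m)+n$, is accurate), so your write-up simply supplies details the paper omits.
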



\section{\normalsize Quadratic special birational transformations} \label{section: type (2, b)}

Now we fix some notations in this paper. Let $\phi: \mathbb{P}^{r}\dashrightarrow Z$ be a special birational transformation of type $(2, b)$. We always assume $b\geq 2$, and $Z\subsetneq\mathbb{P}^{N}$ is a nondegenerate prime Fano manifold. Let $X$ (resp. $Y$) be the base locus scheme of $\phi$ (resp. $\phi^{-1}$). Denote by $n$ (resp. $m$) the dimension of $X$ (resp. $Y$), $\delta$ the secant defect of $X$, and $c=N-r$. Let $\sigma: W\rightarrow\mathbb{P}^{r}$ be the blow-up of $\mathbb{P}^{r}$ along $X$. There is a natural morphism $\tau: W\rightarrow Z$ such that $\phi=\tau\circ\sigma^{-1}$. Denote by $E_{\mathbb{P}}=\sigma^{-1}(X)$, $E_{Z}=\tau^{-1}(Y)$ (scheme theoretically), $D_{\mathbb{P}}=\tau(E_{\mathbb{P}})$, and $H_{\mathbb{P}}\subseteq\mathbb{P}^{r}$ (resp. $H_{Z}\subseteq Z$) a hyperplane section.

\subsection{\normalsize Properties of $X$ and $Z^{(1)}$}

Now we recall some facts on $X$, most of which are from \cite[Prop. 1.3]{ES89} and \cite[Prop. 1, Prop. 2, Prop. 3, Rmk. 2]{AS13a}, except the linear normality of $X$ is $\mathbb{P}^{r}$ follows from \cite[Prop. 4.4(1)]{Sta12}.

\begin{prop} \label{X is QEL and Sec(X) is hypersurface}

$(i)$ $\text{Sec}(X)=\sigma(E_{Z})\subseteq\mathbb{P}^{r}$ is a hypersurface of degree $2b-1$, and $r=2n+2-\delta$;

$(ii)$ $X\subseteq\mathbb{P}^{r}$ is a nondegenerate linearly normal quadratic QEL-manifold;

$(iii)$ $\sigma^{*}H_{\mathbb{P}}=b\tau^{*}H_{Z}-E_{Z}$, and $\tau^{*}H_{Z}=2\sigma^{*}H_{\mathbb{P}}-E_{\mathbb{P}}$ in $\text{Pic}(W)$;

$(iv)$ $m=2n-2\delta$.
\end{prop}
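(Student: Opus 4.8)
The plan is to realize all four statements as intersection-theoretic identities on the resolution $W$, using the two contractions $\sigma$ and $\tau$. Since $X$ is smooth and connected, $\text{Pic}(W)=\mathbb{Z}\,\sigma^{*}H_{\mathbb{P}}\oplus\mathbb{Z}\,E_{\mathbb{P}}$, so everything reduces to expressing the relevant classes in this basis. I would start with $(iii)$. Because $\phi=\tau\circ\sigma^{-1}$ is given by a linear subsystem of $|\mathcal{O}_{\mathbb{P}^{r}}(2)|$ whose scheme-theoretic base locus is the smooth variety $X$, the strict transforms of these quadrics form the base-point-free system defining $\tau$; this forces $\tau^{*}H_{Z}=2\sigma^{*}H_{\mathbb{P}}-E_{\mathbb{P}}$. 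Running the identical argument for $\phi^{-1}$, which is defined by degree-$b$ forms with base scheme $Y$ and is resolved by $\tau$ with exceptional divisor $E_{Z}$, yields $\sigma^{*}H_{\mathbb{P}}=b\,\tau^{*}H_{Z}-E_{Z}$. These two relations are exactly $(iii)$.

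Assertion $(i)$ I would then deduce formally. Substituting the first relation of $(iii)$ into the second gives $E_{Z}=(2b-1)\sigma^{*}H_{\mathbb{P}}-b\,E_{\mathbb{P}}$, hence $\sigma_{*}E_{Z}=(2b-1)H_{\mathbb{P}}$ since $\sigma_{*}E_{\mathbb{P}}=0$; as $\sigma$ is birational this shows $\sigma(E_{Z})$ is a hypersurface of degree $2b-1$. To identify $\sigma(E_{Z})$ with $\text{Sec}(X)$, I would check that every secant line is contracted by $\phi$: a general secant line $\ell$ meets $X$ in two points $p,q$, and each quadric $Q_{i}$ of the defining system restricts on $\ell\cong\mathbb{P}^{1}$ to a degree-two form vanishing at $p$ and $q$, hence $Q_{i}|_{\ell}=\lambda_{i}\,s_{p}s_{q}$ for the unique (up to scale) such form $s_{p}s_{q}$; the ratios $[\lambda_{0}:\cdots:\lambda_{N}]$ are then constant along $\ell$, so $\phi(\ell)$ is a single point of $Y$. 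Therefore $\text{Sec}(X)=\overline{\phi^{-1}(Y)}=\sigma(\tau^{-1}(Y))=\sigma(E_{Z})$. Finally $\dim\text{Sec}(X)=r-1$ together with the definition $\delta=2n+1-\dim\text{Sec}(X)$ gives $r=2n+2-\delta$.

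For $(iv)$ I would study the fibers of $\tau$ over $Y$. By $(i)$, $E_{Z}=\tau^{-1}(Y)$ is a divisor in $W$, so $\dim E_{Z}=r-1$; since $\tau|_{E_{Z}}\colon E_{Z}\to Y$ is surjective and $\tau^{-1}(y)\subseteq E_{Z}$, a general fiber $\tau^{-1}(y)$ has dimension $r-1-m$. Now $\sigma$ maps this fiber birationally onto $\overline{\phi^{-1}(y)}$, which is swept out by the secant lines contracted to $y$, i.e. it is the secant cone $C_{u}(X)$ for a general $u$ over $y$; this coincides with the linear span $\langle\Sigma_{u}\rangle$ of the entry locus, of dimension $\delta+1$ since $\Sigma_{u}$ is a $\delta$-dimensional quadric. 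Equating, $r-1-m=\delta+1$, and with $(i)$ this gives $m=2n-2\delta$. Part $(ii)$ collects the remaining facts: $X$ is nondegenerate and, by \cite[Prop. 4.4(1)]{Sta12}, linearly normal; it is a quadratic manifold because it is the scheme-theoretic base locus of a system of quadrics; and it is a QEL-manifold because $\Sigma_{u}=X\cap\langle\Sigma_{u}\rangle$ is a smooth $\delta$-dimensional quadric, by the fiber analysis above.

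The genuine content, and the step I expect to be the main obstacle, is the input that the general fibers of $\phi$ are linear spaces and that the entry loci are smooth quadrics—these are the structural theorems of \cite{ES89} and \cite{AS13a}, and I would import them rather than reprove them. Once the linearity of the contracted fibers is in hand, the Picard-group relation in $(iii)$, the degree and secant-variety identification in $(i)$, and the dimension count in $(iv)$ are all formal consequences.
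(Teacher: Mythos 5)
The paper offers no proof of this proposition at all: it is recalled as known, with items $(i)$, $(iii)$, $(iv)$ and most of $(ii)$ cited from \cite[Prop. 1.3]{ES89} and \cite[Prop. 1, Prop. 2, Prop. 3, Rmk. 2]{AS13a}, and linear normality from \cite[Prop. 4.4(1)]{Sta12}. So your decision to import the two structural theorems (linearity of contracted fibers, quadric entry loci) from exactly those sources is consistent with the paper's treatment, and your Picard-group relations in $(iii)$ and the dimension count in $(iv)$ are the right formal skeleton for deducing the rest.

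However, the portions you chose to rederive contain a genuine gap. In $(i)$ you prove only the inclusion $\text{Sec}(X)\subseteq\sigma(E_{Z})$ (general secant lines are contracted), and then write ``Therefore $\text{Sec}(X)=\overline{\phi^{-1}(Y)}=\sigma(E_{Z})$''. The reverse inclusion $\sigma(E_{Z})\subseteq\text{Sec}(X)$ --- i.e., that every positive-dimensional fiber of $\phi$ is swept out by secant or tangent lines of $X$ --- is the substantive half and is nowhere argued; your pairwise argument (two points $z,z'$ in one fiber force the quadrics through $z$ to vanish at $z'$) does not by itself put the line $\langle z,z'\rangle$ through $X$, since the restricted system on that line may be a base-point-free pencil. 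This cannot be waved away: the identity $r=2n+2-\delta$ is \emph{equivalent} to $\text{Sec}(X)$ being a hypersurface, and from the inclusion you prove one gets only $r\geq 2n+2-\delta$. A second gap is the degree claim: $\sigma_{*}E_{Z}=(2b-1)H_{\mathbb{P}}$ says that $\sum_{i}m_{i}\deg(\sigma(F_{i}))=2b-1$, summed over the components $F_{i}$ of $E_{Z}$ with multiplicities $m_{i}$ (each maps birationally onto its image, since no $F_{i}$ equals $E_{\mathbb{P}}$); to conclude $\deg\sigma(E_{Z})=2b-1$ you need $E_{Z}$ reduced and irreducible. For $b=2$ this can be rescued ($3=(\sum m_{i})\deg\text{Sec}(X)$ and $\deg\text{Sec}(X)\geq 2$ by nondegeneracy of $X$ force $\sum m_{i}=1$), but the proposition concerns all types $(2,b)$, and irreducibility of $E_{Z}$ is precisely the nontrivial point that \cite{ES89} settle with their Picard-rank-two lemma, quoted later in this paper as Proposition \ref{ein-shepherd pic =Z+Z then E irreducible}.

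Two smaller imprecisions: a contracted secant line maps to a point \emph{of $Y$} only because a point of $Z$ with positive-dimensional $\phi$-fiber must be a fundamental point of $\phi^{-1}$, and the fundamental locus of a map defined by a system with codimension-$\geq 2$ base scheme is the support of that base scheme; this is used silently. And in $(iii)$, the relation $\sigma^{*}H_{\mathbb{P}}=b\tau^{*}H_{Z}-E_{Z}$ is not ``the identical argument'', because $W$ is not constructed as a blow-up of $Z$ along $Y$; the correct justification is that $\sigma$ is a morphism, hence the base ideal $I_{Y}\cdot\mathcal{O}_{W}$ of the pulled-back system must be invertible, which simultaneously shows that $E_{Z}=\tau^{-1}(Y)$ is Cartier and yields the relation. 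These two points are repairable, but the missing reverse inclusion in $(i)$ and the unaddressed irreducibility of $E_{Z}$ are essential omissions in the parts you set out to prove.
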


\begin{prop} (\cite[Prop. 2.12]{AS13b}) \label{X linearly embedded into L^1(Z)}
Take a general point $z\in Z$. Let $p\in\phi^{-1}(z)$. Denote by $\pi_{p}: \mathbb{P}^{r}\dashrightarrow\mathbb{P}^{r-1}=\mathbb{P}((T_{z}Z)^{*})$ the linear projection from $p$. Then $\pi_{p}$ has definition on every point of $X$, $\pi_{p}(X)\subseteq Z^{(1)}$ and $\pi_{p}|_{X}: X\rightarrow\pi_{p}(X)$ is an isomorphism.
\end{prop}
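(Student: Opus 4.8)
The plan is to analyze the images under $\phi$ of the lines joining $p$ to the points of $X$. Everything reduces to locating $p$ with respect to $X$ and $\text{Sec}(X)$, together with the elementary fact that a quadratic map restricted to a line through a base point drops degree.

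First I would establish that, for general $z$, the preimage $p$ satisfies $p \notin \text{Sec}(X)$; in particular $p \notin X$, so the projection $\pi_p$ is defined along all of $X$. Since $\phi$ is a morphism precisely on $\mathbb{P}^r \setminus X$ and is birational, for general $z$ its unique preimage lies in $\mathbb{P}^r \setminus X$, giving $p \notin X$. To control $\text{Sec}(X)$, recall from Proposition \ref{X is QEL and Sec(X) is hypersurface}$(i)$ that $\text{Sec}(X) = \sigma(E_Z)$, where $E_Z = \tau^{-1}(Y)$. Let $\tilde p \in W$ be the unique point with $\tau(\tilde p) = z$; as $z$ is general it avoids the base locus $Y$ of $\phi^{-1}$, so $\tilde p \notin \tau^{-1}(Y) = E_Z$. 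Because $p \notin X$, the blow-up $\sigma$ is an isomorphism near $\tilde p$ with $\sigma^{-1}(p) = \{\tilde p\}$, whence $p \in \sigma(E_Z)$ if and only if $\tilde p \in E_Z$, which fails. Thus $p \notin \text{Sec}(X)$, and since $\text{Tan}(X) \subseteq \text{Sec}(X)$, no secant or tangent line of $X$ passes through $p$.

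Next I would prove the containment $\pi_p(X) \subseteq Z^{(1)}$. Fix $x \in X$ and set $\ell = \overline{px}$, a genuine line as $p \neq x$. Since $x$ lies in the base locus $X$, each defining quadric $Q_i$ of $\phi$ vanishes at $x$, so the binary quadratic $q_i := Q_i|_\ell$ vanishes at $x$ and factors as $q_i = s\, t_i$, where $s$ is the linear form on $\ell \cong \mathbb{P}^1$ cutting out $x$ and $t_i$ is linear. Hence $\phi|_\ell = [q_0 : \cdots : q_N] = [t_0 : \cdots : t_N]$ has degree at most one, and it is non-constant because $\phi$ is an immersion at the general point $p$. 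Therefore $\phi(\ell)$ is a line in $Z$ through $z = \phi(p)$, i.e. a minimal rational curve. Under the identification $\mathbb{P}^{r-1} = \mathbb{P}((T_z Z)^*)$ induced by the isomorphism $d\phi_p$, the point $\pi_p(x)$ is exactly the tangent direction of $\phi(\ell)$ at $z$, which therefore lies in $\mathcal{L}_z(Z) = Z^{(1)}$.

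Finally, the isomorphism statement follows formally from the first step: since $X$ is smooth and $p$ lies on no secant and no tangent line of $X$, the linear projection $\pi_p$ is injective and unramified on $X$, hence $\pi_p|_X$ is a closed embedding onto $\pi_p(X)$. I expect the main obstacle to be the first step, namely the verification that $p \notin \text{Sec}(X)$: it is this fact, and not merely $p \notin X$, that underlies the embedding, and it relies on translating the generality of $z$ through the two morphisms $\sigma$ and $\tau$ via the identity $\text{Sec}(X) = \sigma(E_Z)$. The degree-drop computation and the identification of $\pi_p(x)$ with a VMRT direction are then routine once immersivity of $\phi$ at $p$ is invoked.
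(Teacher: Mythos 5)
Your proof is correct. The paper itself gives no argument for this statement --- it is quoted from \cite[Prop. 2.12]{AS13b} --- and your proof is essentially the argument of that source: restricting the defining quadrics of $\phi$ to a line $\langle p, x\rangle$ with $x\in X$ produces a common linear factor, so such lines map to lines of $Z$ through $z$ and $\pi_{p}|_{X}$ is identified (via $d\phi_{p}$) with a map into $\mathcal{L}_{z}(Z)$, while the embedding statement reduces to $p\notin\text{Sec}(X)$. Your verification of that last point, translating the generality of $z$ through $\sigma$ and $\tau$ via $\text{Sec}(X)=\sigma(E_{Z})$, is sound and makes ``generality of $p$'' precise; alternatively one can simply note that $p$ is a general point of $\mathbb{P}^{r}$ because $\phi$ is birational, while $\text{Sec}(X)$ is a hypersurface, hence proper, by Proposition~\ref{X is QEL and Sec(X) is hypersurface}$(i)$.
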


\begin{prop} (\cite[Cor. 2.15]{AS13b}) \label{L_z smooth and (when b=2) covered by lines}

$(i)$ $Z^{(1)}\subseteq\mathbb{P}^{r-1}$ is smooth irreducible and non-degenerate;

$(ii)$ $\dim(Z^{(1)})=n+(b-1)(\delta+1)$;

$(iii)$ if $b=2$, then $Z^{(1)}\subseteq\mathbb{P}^{r-1}$ is covered by lines.
\end{prop}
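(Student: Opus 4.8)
The engine of the proof is the isomorphism $\pi_{p}|_{X}\colon X\xrightarrow{\ \sim\ }\pi_{p}(X)\subseteq Z^{(1)}$ supplied by Proposition \ref{X linearly embedded into L^1(Z)}, together with two general facts: since $Z$ is a prime Fano manifold it is covered by lines, so $Z^{(1)}$ is smooth by \cite[Prop. 1.5]{Hwa00}; and since the minimal rational curves of $Z$ are its lines, $\dim Z^{(1)}=i(Z)-2$. I would prove (ii) by a Picard-group computation of $i(Z)$, (i) by combining the behaviour of $X$ under projection with a connectedness argument, and (iii) by producing planes in $Z$ through a general point.

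For (ii) I would compute $i(Z)$ on $W$. Blowing up the smooth centre $X\subseteq\mathbb{P}^{r}$ gives $K_{W}=-(r+1)\,\sigma^{*}H_{\mathbb{P}}+(r-n-1)\,E_{\mathbb{P}}$, and the relations $\tau^{*}H_{Z}=2\sigma^{*}H_{\mathbb{P}}-E_{\mathbb{P}}$, $\sigma^{*}H_{\mathbb{P}}=b\,\tau^{*}H_{Z}-E_{Z}$ of Proposition \ref{X is QEL and Sec(X) is hypersurface}(iii) let me rewrite $K_{W}$ in the basis $\{\tau^{*}H_{Z},E_{Z}\}$ of $\mathrm{Pic}(W)$ (the change of basis has determinant $-1$). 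Intersecting with the strict transform $\widetilde{\ell}$ of a general line $\ell\subseteq Z$ through a general point---one that avoids $Y$, so that $E_{Z}\cdot\widetilde{\ell}=0$ and $\tau^{*}H_{Z}\cdot\widetilde{\ell}=1$---and using that $\tau$ is an isomorphism over $Z\setminus Y$, I obtain $-K_{Z}\cdot\ell=i(Z)$. (For $b=2$ the lines through $z$ meeting $Y$ are exactly $\pi_{p}(X)$, of dimension $n<\dim Z^{(1)}$, so the general line does avoid $Y$.) The resulting arithmetic, with $r=2n+2-\delta$ from Proposition \ref{X is QEL and Sec(X) is hypersurface}(i), gives $i(Z)=n+1+b+(b-1)\delta$ and hence $\dim Z^{(1)}=n+(b-1)(\delta+1)$.

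For the non-degeneracy in (i), note that $X\subseteq\mathbb{P}^{r}$ is non-degenerate by Proposition \ref{X is QEL and Sec(X) is hypersurface}(ii), and that the centre $p$ of the projection is general, hence off $\mathrm{Sec}(X)$ and a fortiori off $X$. The projection of a non-degenerate variety from a point not lying on it remains non-degenerate, so $\pi_{p}(X)$ spans $\mathbb{P}^{r-1}=\mathbb{P}((T_{z}Z)^{*})$; as $\pi_{p}(X)\subseteq Z^{(1)}$, the VMRT is non-degenerate. With smoothness already in hand, irreducibility is equivalent to connectedness of $Z^{(1)}$. I expect this to follow from the VMRT being at least half-dimensional, $2\dim Z^{(1)}\geq\dim Z$, or, more structurally, from realizing $Z^{(1)}$ as swept out by an irreducible family of linear subspaces meeting the irreducible locus $\pi_{p}(X)$. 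Establishing connectedness cleanly is, I anticipate, the first genuinely delicate point.

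Finally, for (iii) take $b=2$, so $\phi^{-1}$ is also quadratic, and let $[\ell]\in Z^{(1)}$ be general. Then $\gamma:=\phi^{-1}(\ell)$ is a conic through $p$, and $\deg\phi(\gamma)=1=2\cdot 2-(\gamma\cdot X)$ forces $\gamma\cdot X=3$, so $\gamma$ meets $X$ in three points; for general $\ell$ the plane $P=\langle\gamma\rangle\cong\mathbb{P}^{2}$ meets $X$ in exactly these three reduced points. Restricting the quadrics that define $\phi$ to $P$ gives a linear system of conics through these three points, and since $\phi$ is birational the image $\phi(P)$ is a surface, so the restricted system has projective dimension $2$ and therefore equals the full net of conics through three points. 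Hence $\phi|_{P}$ is the quadratic Cremona transformation and $\phi(P)$ is a \emph{linear} $\mathbb{P}^{2}\subseteq Z$ through $z=\phi(p)$ containing $\ell$; the lines of this plane through $z$ sweep out a $\mathbb{P}^{1}\subseteq Z^{(1)}$ passing through $[\ell]$. As $[\ell]$ was general, $Z^{(1)}$ is covered by lines. Besides connectedness in (i), the step I expect to demand the most care is verifying that $P\cap X$ is reduced of length three for the general $\gamma$, which is exactly what makes the Cremona picture---and hence the linearity of $\phi(P)$---valid.
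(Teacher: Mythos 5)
First, a point of comparison: the paper does not prove this proposition at all --- it is imported verbatim as \cite[Cor.~2.15]{AS13b} --- so your proposal can only be judged on its own terms, not against an internal argument. Much of it is sound. The computation of $K_{W}$ in the unimodular basis $\{\tau^{*}H_{Z},E_{Z}\}$ is arithmetically correct and gives $i(Z)=n+1+b+(b-1)\delta$, hence the formula in $(ii)$; smoothness and nondegeneracy in $(i)$ are handled correctly; and the Cremona-plane mechanism for $(iii)$ is essentially complete --- in fact your flagged worry about $P\cap X$ being three reduced points is dispatched by your own observation that $\phi(P)$ is a surface: the restricted net of conics then has no fixed component, which forces the base scheme $P\cap X$ to be finite of length exactly three, and reducedness is irrelevant to the image being a plane.

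The genuine gaps are two. First, the step on which $(ii)$ (and hence $(iii)$) rests --- that a general line through a general point $z$ avoids $Y$, so that its strict transform misses the whole exceptional locus of $\tau$ --- is justified circularly: you appeal to $n<\dim Z^{(1)}$, i.e.\ to the very formula being proved, and only for $b=2$ at that, while $(ii)$ concerns all $b\geq 2$. A priori $Z^{(1)}$ could coincide with $\pi_{p}(X)$, i.e.\ every line through $z$ could meet $Y$, and then there is no line to intersect with. This can be excluded, but it needs a further idea; for instance: every component of $Z^{(1)}$ has dimension $i(Z)-2$ (lines through a general point are free), so $Z^{(1)}=\pi_{p}(X)$ would force $i(Z)=n+2$; on the other hand, writing $-K_{W}=\tau^{*}(-K_{Z})-D$ with $D$ effective and positive along every $\tau$-exceptional divisor (as $Z$ is smooth), and intersecting with the strict transform $\widetilde{m}$ of $\langle p,x\rangle$ for general $x\in X$ (so that $\sigma^{*}H_{\mathbb{P}}\cdot\widetilde{m}=E_{\mathbb{P}}\cdot\widetilde{m}=1$) gives $i(Z)=n+2+D\cdot\widetilde{m}$ with $D\cdot\widetilde{m}\geq 1$, because in this scenario the line $\phi(\langle p,x\rangle)$ meets $Y$ --- a contradiction. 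Second, irreducibility of $Z^{(1)}$, which is the only nontrivial content of $(i)$, is not proved: ``I expect this to follow'' from half-dimensionality or from a sweeping family is not an argument, and neither suggestion is a citable theorem (a smooth subvariety of large dimension, even a VMRT, is not automatically connected). Note also that $(iii)$ silently needs $(i)$: if $\pi_{p}(X)\cong X$ were a component of $Z^{(1)}$, your construction would say nothing about lines through points of that component --- and $X$ itself (e.g.\ a Veronese surface) may contain no lines at all.
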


A projective variety $V\subseteq\mathbb{P}^{r}$ is called a \textit{complete intersection of type $(d_{1}, \ldots, d_{c})$}, if it is the complete intersection of $c$ independent hypersurfaces of degrees $d_{1}, \ldots, d_{c}$, respectively.

\begin{prop} (\cite[Thm. 2.4(3)]{IR13}) \label{VMRT of complete intersection is also complete intersection}
Let $V\subseteq\mathbb{P}^{r}$ be a nondegenerate smooth complete intersection of type $(d_{1}, \ldots, d_{c})$. Assume that $V$ is covered by lines. Then $V^{(1)}\subseteq\mathbb{P}^{r-c-1}$ is a smooth complete intersection of type $(2, 3, \ldots, d_{1}, \ldots, 2, 3, \ldots, d_{c})$.
\end{prop}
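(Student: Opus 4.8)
The plan is to realize $V^{(1)}$ explicitly as the zero scheme of the higher-order Taylor coefficients of the defining equations of $V$ along the lines through a general point, and then to upgrade ``cut out by these forms'' to ``complete intersection'' by a dimension count. First I would fix a general point $v\in V$ and choose homogeneous coordinates $x_{0},\ldots,x_{r}$ with $v=[1:0:\cdots:0]$. Writing $V=\{F_{1}=\cdots=F_{c}=0\}$ with $\deg F_{i}=d_{i}$, a line through $v$ has the form $\ell_{w}=\{[s:tw_{1}:\cdots:tw_{r}]\}$ for a direction $w\in\mathbb{P}^{r-1}$, and restricting $F_{i}$ to $\ell_{w}$ yields
\[
F_{i}(s,tw_{1},\ldots,tw_{r})=\sum_{j=0}^{d_{i}}s^{\,d_{i}-j}\,t^{\,j}\,a_{i,j}(w),
\]
where each $a_{i,j}$ is a homogeneous form of degree $j$ in $w$. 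The line $\ell_{w}$ lies in $V$ exactly when $a_{i,j}(w)=0$ for all $i$ and all $j=1,\ldots,d_{i}$, the term $a_{i,0}=F_{i}(v)$ vanishing because $v\in V$.

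Next I would separate off the linear part. The forms $a_{i,1}$ are precisely the differentials $d_{v}F_{i}$, and since $V$ is smooth of codimension $c$ at $v$ these $c$ linear forms are independent, so $\{a_{1,1}=\cdots=a_{c,1}=0\}$ cuts out the space of tangent directions $\mathbb{P}((T_{v}V)^{*})\cong\mathbb{P}^{r-c-1}$. Inside this $\mathbb{P}^{r-c-1}$ the variety of lines in $V$ through $v$, namely $V^{(1)}$, is therefore the zero locus of the remaining forms $a_{i,j}$ with $2\le j\le d_{i}$, which have degrees $2,3,\ldots,d_{1},\ldots,2,3,\ldots,d_{c}$, exactly the claimed type; at most $\sum_{i=1}^{c}(d_{i}-1)$ forms are involved.

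Finally I would force the complete intersection property by a dimension count. Since $V$ is covered by lines, $V^{(1)}\neq\emptyset$ and, by the cited result of Hwang (Proposition 1.5 of \cite{Hwa00}), $V^{(1)}$ is smooth; moreover the dimension of the VMRT at a general point equals $-K_{V}\cdot\ell-2$ for a line $\ell\subseteq V$, and for a complete intersection $-K_{V}=(r+1-\sum_{i}d_{i})H$ gives $\dim V^{(1)}=r-1-\sum_{i}d_{i}$. Hence the codimension of $V^{(1)}$ in $\mathbb{P}^{r-c-1}$ equals $\sum_{i=1}^{c}(d_{i}-1)$. By Krull's height theorem the codimension of a subscheme is at most the number of its defining forms, so equality forces all of the forms $a_{i,j}$ ($j\ge 2$) to be nonzero and, since the coordinate ring of $\mathbb{P}^{r-c-1}$ is Cohen--Macaulay, to form a regular sequence. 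Therefore $V^{(1)}$ is a smooth complete intersection of the stated type.

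The main obstacle is precisely this last step. Exhibiting the forms $a_{i,j}$ and their degrees is elementary, but controlling the codimension — that is, ruling out any excess-dimensional component of the scheme of lines through the general point $v$ — is where the hypothesis that $V$ is covered by lines enters in an essential way, through the dimension formula for the VMRT. Once the expected codimension $\sum_{i}(d_{i}-1)$ is matched by the actual one, the complete intersection conclusion and the appearance of every listed degree follow formally.
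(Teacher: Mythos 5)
The paper offers no proof of this proposition: it is quoted verbatim from Ionescu--Russo \cite{IR13}, Thm.~2.4(3), so there is no internal argument to compare yours against. Your proof is correct in outline and is, in substance, the standard argument (and essentially the one in \cite{IR13} itself): expand each $F_{i}$ along the lines through a general point $v$, observe that the Taylor coefficients $a_{i,j}$ with $2\le j\le d_{i}$ cut out the space of lines through $v$ inside $\mathbb{P}((T_{v}V)^{*})\cong\mathbb{P}^{r-c-1}$, compute $\dim V^{(1)}=-K_{V}\cdot\ell-2=r-1-\sum_{i}d_{i}$ using freeness/generality, and then use Krull's height theorem together with Cohen--Macaulayness of the polynomial ring to conclude that the $\sum_{i}(d_{i}-1)$ forms are all nonzero and form a regular sequence.

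One step deserves to be made explicit. The regular-sequence argument shows that the \emph{scheme} $W$ cut out by the forms $a_{i,j}$ ($j\ge 2$) is a complete intersection of the stated type; but the proposition is about $V^{(1)}$, which a priori is only the reduction $W_{\mathrm{red}}$, and a complete intersection scheme can be non-reduced while having smooth reduction (a double line, for instance). So smoothness of $V^{(1)}$ by itself does not yet give $V^{(1)}=W$. The fix uses exactly the facts you already invoke: for $v$ general, every line $\ell\subseteq V$ through $v$ is free (\cite{Kol96}, II.3.11), the Zariski tangent space of $W$ at $[\ell]$ is $H^{0}(\ell, N_{\ell/V}\otimes\mathfrak{m}_{v})$, and freeness gives $h^{0}(\ell, N_{\ell/V}\otimes\mathfrak{m}_{v})=-K_{V}\cdot\ell-2=\dim W$, so $W$ is smooth as a scheme, hence reduced, hence equal to $V^{(1)}$. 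Equivalently, the smoothness statement of \cite{Hwa00}, Prop.~1.5 should be read as applying to the natural scheme structure on the space of lines through $v$, which is precisely $W$. With that remark added, your proof is complete.
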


\begin{prop} (\cite[Thm. 1.1]{Li14}) \label{my result: deg(var)|deg(subvar) and codim_spansub(sub)>=codim_spanvar(var)}
Let $V\subsetneq\mathbb{P}^{r}$ be an $n$-dimensional nondegenerate smooth projective variety and let $S\subseteq V$ be an $s$-dimensional closed subvariety. Assume that either

$(i)$ $s\geq \frac{r}{2}$; or

$(ii)$ $V$ is a complete intersection in $\mathbb{P}^{r}$, and $s>\frac{n}{2}$.

Then $\deg(V)$ divides $\deg(S)$.
\end{prop}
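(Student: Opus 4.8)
The plan is to reduce both cases to one cohomological statement about $V$ and then to compute degrees by intersection theory. Set $h = c_{1}(\mathcal{O}_{V}(1)) \in H^{2}(V,\mathbb{Z})$, and let $[S]\in H^{2(n-s)}(V,\mathbb{Z})$ be the Poincar\'{e} dual of the fundamental class of $S$, which exists because $V$ is smooth. The crucial claim I would prove is that
\[
H^{2(n-s)}(V,\mathbb{Z}) = \mathbb{Z}\cdot h^{n-s},
\]
equivalently that the restriction map $H^{2(n-s)}(\mathbb{P}^{r},\mathbb{Z})\to H^{2(n-s)}(V,\mathbb{Z})$ is surjective. Granting this, I can write $[S] = d\,h^{n-s}$ with $d\in\mathbb{Z}$, and then the projection formula gives
\[
\deg(S) = \int_{S} h^{s} = \int_{V} [S]\cdot h^{s} = d\int_{V} h^{n} = d\cdot\deg(V),
\]
so that $\deg(V)$ divides $\deg(S)$. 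Thus everything comes down to verifying the displayed cohomological identity under hypothesis $(i)$ or $(ii)$.

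For case $(i)$ I would invoke the Barth--Larsen theorem: for a smooth $V^{n}\subseteq\mathbb{P}^{r}$ the restriction $H^{i}(\mathbb{P}^{r},\mathbb{Z})\to H^{i}(V,\mathbb{Z})$ is an isomorphism for all $i\leq 2n-r$. The hypothesis $s\geq\frac{r}{2}$ is exactly the inequality $2(n-s)\leq 2n-r$, so the restriction is an isomorphism in degree $2(n-s)$ and the claim follows; note that the boundary case $s=\frac{r}{2}$ is still covered, since Barth--Larsen allows $i=2n-r$. For case $(ii)$, since $V$ is a smooth complete intersection, the Lefschetz hyperplane theorem gives that $H^{i}(\mathbb{P}^{r},\mathbb{Z})\to H^{i}(V,\mathbb{Z})$ is an isomorphism for $i<n$; here the hypothesis $s>\frac{n}{2}$ is exactly $2(n-s)<n$, and the claim follows again.

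The only real content is the recognition that the two numerical hypotheses are precisely the thresholds at which Barth--Larsen (respectively, Lefschetz for complete intersections) forces the low-degree integral cohomology of $V$ to coincide with that of $\mathbb{P}^{r}$; after that the divisibility is formal. The two points I would be careful about are: working with \emph{integral} coefficients throughout, since with rational coefficients one would only obtain $d\in\mathbb{Q}$ and lose the divisibility statement; and matching the inequality in each case with the correct strict or non-strict range in the cited theorem, so that the boundary case $s=\frac{r}{2}$ in $(i)$ is genuinely included while the strict inequality $s>\frac{n}{2}$ in $(ii)$ is what keeps $[S]$ inside the Lefschetz range.
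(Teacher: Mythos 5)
This proposition is quoted in the paper from \cite{Li14} without an internal proof, so there is no in-paper argument to compare against; your proof is correct as written. Your reduction of both cases to the surjectivity of $H^{2(n-s)}(\mathbb{P}^{r},\mathbb{Z})\to H^{2(n-s)}(V,\mathbb{Z})$ — via Barth--Larsen in case $(i)$ (where $s\geq\frac{r}{2}$ is exactly $2(n-s)\leq 2n-r$) and via the Lefschetz theorem for complete intersections in case $(ii)$ (where $s>\frac{n}{2}$ is exactly $2(n-s)<n$) — is precisely the standard cohomological route underlying the cited result, with integral coefficients and the boundary thresholds handled correctly.
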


As an application of previous results, we have the following

\begin{prop} \label{deg(L_z) divides deg(X)}
Assume that either

$(i)$ $\delta\geq 1$; or

$(ii)$ $Z\subsetneq\mathbb{P}^{N}$ is a nondegenerate smooth complete intersection of type $(d_{1}, \ldots, d_{c})$ such that $(c, d_{1})\neq (1, 2)$.

Then $\deg(Z^{(1)})$ divides $\deg(X)$.
\end{prop}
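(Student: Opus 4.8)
The plan is to realize $X$ as a subvariety of $Z^{(1)}$ having the same degree, and then to invoke Proposition \ref{my result: deg(var)|deg(subvar) and codim_spansub(sub)>=codim_spanvar(var)}. Fix a general point $z\in Z$ and a point $p\in\phi^{-1}(z)$, and consider the linear projection $\pi_{p}\colon\mathbb{P}^{r}\dashrightarrow\mathbb{P}^{r-1}=\mathbb{P}((T_{z}Z)^{*})$. By Proposition \ref{X linearly embedded into L^1(Z)}, $\pi_{p}$ is defined at every point of $X$ (so $p\notin X$), one has $\pi_{p}(X)\subseteq Z^{(1)}$, and $\pi_{p}|_{X}\colon X\rightarrow\pi_{p}(X)$ is an isomorphism. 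Since $p\notin X$, the restriction $\pi_{p}|_{X}$ is a finite morphism, and being an isomorphism it is birational onto its image; hence $\deg(\pi_{p}(X))=\deg(X)$. If $\deg(Z^{(1)})=1$ the assertion is trivial, so we may assume $Z^{(1)}\subsetneq\mathbb{P}^{r-1}$ is a proper nondegenerate smooth irreducible subvariety (Proposition \ref{L_z smooth and (when b=2) covered by lines}$(i)$). It then suffices to apply Proposition \ref{my result: deg(var)|deg(subvar) and codim_spansub(sub)>=codim_spanvar(var)} with $V=Z^{(1)}$ and $S=\pi_{p}(X)$, which has dimension $s=\dim(X)=n$.

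In case $(i)$ I would verify hypothesis $(i)$ of that proposition, namely $s\geq\frac{r-1}{2}$ for the ambient space $\mathbb{P}^{r-1}$. By Proposition \ref{X is QEL and Sec(X) is hypersurface}$(i)$ we have $r=2n+2-\delta$, so the inequality $n\geq\frac{r-1}{2}=\frac{2n+1-\delta}{2}$ is equivalent to $\delta\geq 1$, which is exactly the standing hypothesis. Therefore $\deg(Z^{(1)})$ divides $\deg(\pi_{p}(X))=\deg(X)$.

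In case $(ii)$, since $Z\subsetneq\mathbb{P}^{N}$ is a smooth complete intersection covered by lines, Proposition \ref{VMRT of complete intersection is also complete intersection} shows that $Z^{(1)}\subseteq\mathbb{P}^{r-1}$ is a smooth complete intersection whose codimension equals $\sum_{i=1}^{c}(d_{i}-1)$. I would then check hypothesis $(ii)$, namely $s>\frac{1}{2}\dim(Z^{(1)})$. Combining $\dim(Z^{(1)})=n+(b-1)(\delta+1)$ (Proposition \ref{L_z smooth and (when b=2) covered by lines}$(ii)$) with $r=2n+2-\delta$, a direct computation of the codimension gives
\[
2n-\dim(Z^{(1)})=n-(b-1)(\delta+1)=\sum_{i=1}^{c}(d_{i}-1)+\delta-1 .
\]
Since each $d_{i}\geq 2$ and $(c,d_{1})\neq(1,2)$, we have $\sum_{i=1}^{c}(d_{i}-1)\geq 2$, whence $2n-\dim(Z^{(1)})\geq 1>0$, that is $n>\frac{1}{2}\dim(Z^{(1)})$. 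Applying Proposition \ref{my result: deg(var)|deg(subvar) and codim_spansub(sub)>=codim_spanvar(var)}$(ii)$ then yields that $\deg(Z^{(1)})$ divides $\deg(X)$.

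The numerical bookkeeping is routine; the point worth isolating is that the excluded value $(c,d_{1})=(1,2)$ is precisely the boundary case where $\sum_{i}(d_{i}-1)=1$ and the inequality $n>\frac{1}{2}\dim(Z^{(1)})$ just fails, so the assumption $(c,d_{1})\neq(1,2)$ is exactly what makes the complete-intersection criterion applicable. The only genuinely geometric input is the degree preservation of $X$ under $\pi_{p}$, which rests entirely on Proposition \ref{X linearly embedded into L^1(Z)}; everything else reduces to the two divisibility criteria already recorded.
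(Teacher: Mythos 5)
Your proof is correct and follows essentially the same route as the paper: embed $X$ degree-preservingly into $Z^{(1)}$ via Proposition \ref{X linearly embedded into L^1(Z)}, then verify the numerical hypotheses of Proposition \ref{my result: deg(var)|deg(subvar) and codim_spansub(sub)>=codim_spanvar(var)}, using Proposition \ref{VMRT of complete intersection is also complete intersection} in case $(ii)$. The only (harmless) difference is that in case $(ii)$ the paper first reduces to $\delta=0$ by invoking case $(i)$, whereas you verify the inequality $n>\frac{1}{2}\dim(Z^{(1)})$ uniformly for all $\delta\geq 0$ via the identity $2n-\dim(Z^{(1)})=\sum_{i}(d_{i}-1)+\delta-1$.
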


\begin{proof}
By Proposition \ref{L_z smooth and (when b=2) covered by lines}$(i)$, $Z^{(1)}$ is an irreducible smooth closed subvariety of $\mathbb{P}^{r-1}$. By Proposition \ref{X linearly embedded into L^1(Z)}, there is a linear projection on $\mathbb{P}^{r}$ sending $X$ is isomorphically to a closed subvariety $X'$ of $Z^{(1)}$. In particular, $\dim(X')=\dim(X)$ and $\deg(X')=\deg(X)$.

$(i)$ By Proposition \ref{X is QEL and Sec(X) is hypersurface}$(i)$, $2\dim(X)=2n\geq r-1$. Thus, $\deg(Z^{(1)})$ divides $\deg(X')=\deg(X)$ by Proposition \ref{my result: deg(var)|deg(subvar) and codim_spansub(sub)>=codim_spanvar(var)}.

$(ii)$ Let $d=\sum\limits_{i=1}^{r}(d_{i}-1)$. By assumption, $d\geq 2$. By $(i)$, we can assume that $\delta=0$. By Proposition \ref{VMRT of complete intersection is also complete intersection}, $Z^{(1)}\subseteq\mathbb{P}^{r-1}$ is a smooth complete intersection of dimension $r-d-1\leq r-3$. Then $\dim(Z^{(1)})< 2\dim(X)$ by Proposition \ref{X is QEL and Sec(X) is hypersurface}$(i)$. Thus, $\deg(Z^{(1)})$ divides $\deg(X')=\deg(X)$ by Proposition \ref{my result: deg(var)|deg(subvar) and codim_spansub(sub)>=codim_spanvar(var)}.
\end{proof}

\subsection{\normalsize Properties of $Y$}

\begin{prop} \label{D is divisor degree 2b-1 and Y<=D<=Sec(Y)}
$D_{\mathbb{P}}\subseteq Z$ is a divisor, $\deg(D_{\mathbb{P}})=(2b-1)\deg(Z)$, and $Y\subseteq D_{\mathbb{P}}\subseteq\text{Sec}(Y)$.
\end{prop}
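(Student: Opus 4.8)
The plan is to exploit the common resolution $W$ together with the Picard-group relations recorded in Proposition~\ref{X is QEL and Sec(X) is hypersurface}$(iii)$. Since $W=\mathrm{Bl}_X\mathbb{P}^{r}$ is smooth and $Z$ is a smooth prime Fano manifold, we have $\rho(W)=2$ and $\rho(Z)=1$, so the birational morphism $\tau$ has exactly one exceptional prime divisor. As $\tau(E_Z)=Y$ has dimension $m=2n-2\delta<r-1$ by Proposition~\ref{X is QEL and Sec(X) is hypersurface}$(iv)$, that divisor is $E_Z$; hence $\tau$ is an isomorphism over $Z\setminus Y$ and $E_{\mathbb{P}}$ is not contracted. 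Consequently $D_{\mathbb{P}}=\tau(E_{\mathbb{P}})$ is a prime divisor and $\tau|_{E_{\mathbb{P}}}$ is birational onto it.

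To compute the class, I would write $D_{\mathbb{P}}\sim kH_Z$ in $\mathrm{Pic}(Z)=\mathbb{Z}(H_Z)$ and $\tau^{*}D_{\mathbb{P}}=E_{\mathbb{P}}+\ell E_Z$, the divisor $E_Z$ being the only exceptional one that can occur. Expressing everything in the basis $\{\sigma^{*}H_{\mathbb{P}},E_{\mathbb{P}}\}$ of $\mathrm{Pic}(W)$ via Proposition~\ref{X is QEL and Sec(X) is hypersurface}$(iii)$ — in particular $\tau^{*}H_Z=2\sigma^{*}H_{\mathbb{P}}-E_{\mathbb{P}}$ and $E_Z=(2b-1)\sigma^{*}H_{\mathbb{P}}-bE_{\mathbb{P}}$ — and comparing coefficients in $k\tau^{*}H_Z=E_{\mathbb{P}}+\ell E_Z$ forces $\ell=2$ and $k=2b-1$. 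Thus $\deg(D_{\mathbb{P}})=k\deg(Z)=(2b-1)\deg(Z)$, and since $\ell=2>0$ the support of $\tau^{*}D_{\mathbb{P}}$ contains $E_Z$, so $Y=\tau(E_Z)\subseteq D_{\mathbb{P}}$.

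For the inclusion $D_{\mathbb{P}}\subseteq\mathrm{Sec}(Y)$, I would argue fiber by fiber. For $x\in X$ let $F_x=\sigma^{-1}(x)\cong\mathbb{P}^{e-1}$, where $e=r-n\ge 2$ is the codimension of $X$, and let $\zeta$ be the hyperplane class of $F_x$. Restricting the relations of Proposition~\ref{X is QEL and Sec(X) is hypersurface}$(iii)$ to $F_x$ gives $\sigma^{*}H_{\mathbb{P}}|_{F_x}=0$ and $E_{\mathbb{P}}|_{F_x}=-\zeta$, whence $\tau^{*}H_Z|_{F_x}=\zeta$ and $E_Z|_{F_x}=b\zeta$. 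Because $\tau$ is a morphism and $\tau^{*}H_Z|_{F_x}=\zeta=\mathcal{O}_{\mathbb{P}^{e-1}}(1)$, the restriction $\tau|_{F_x}$ is given by a base-point-free subsystem of $|\mathcal{O}_{\mathbb{P}^{e-1}}(1)|$, which must be the complete system; hence $\Lambda_x:=\tau(F_x)\cong\mathbb{P}^{e-1}$ is a linear subspace of $\mathbb{P}^{N}$ contained in $Z$. Moreover $E_Z\cap F_x\in|\mathcal{O}_{\mathbb{P}^{e-1}}(b)|$ maps isomorphically onto a degree-$b$ hypersurface $\Lambda_x\cap Y\subseteq Y$ spanning $\Lambda_x$. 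Since $b\ge 2$, a general line in $\Lambda_x$ meets this hypersurface in at least two points, so $\mathrm{Sec}(\Lambda_x\cap Y)=\Lambda_x$ and therefore $\Lambda_x\subseteq\mathrm{Sec}(Y)$. Taking the union over $x\in X$ yields $D_{\mathbb{P}}=\tau(E_{\mathbb{P}})=\bigcup_{x}\Lambda_x\subseteq\mathrm{Sec}(Y)$.

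I expect the last step to be the main obstacle: one must control the image $\tau(E_{\mathbb{P}})$ finely enough to produce honest secant lines of $Y$. The decisive input is the computation of the restrictions of $\tau^{*}H_Z$ and $E_Z$ to the fibers $F_x$, which reduces the geometric claim to the elementary fact that a hypersurface of degree $\ge 2$ in a projective space has that space as its secant variety. The points requiring care are that $\tau|_{F_x}$ is a genuine linear embedding (not merely finite) and that $\Lambda_x\cap Y$ is a nondegenerate hypersurface of $\Lambda_x$ rather than a degenerate or lower-dimensional image; both follow from $\tau|_{F_x}$ being the full system $|\mathcal{O}_{\mathbb{P}^{e-1}}(1)|$.
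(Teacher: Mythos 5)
Your proposal is correct in substance and, at its core, runs along the same route as the paper's proof: everything is extracted from the relations $\tau^{*}H_{Z}=2\sigma^{*}H_{\mathbb{P}}-E_{\mathbb{P}}$ and $E_{Z}=(2b-1)\sigma^{*}H_{\mathbb{P}}-bE_{\mathbb{P}}$ in $\mathrm{Pic}(W)$, and the decisive inclusion $D_{\mathbb{P}}\subseteq\mathrm{Sec}(Y)$ comes from restricting them to the fibers $F_{x}=\sigma^{-1}(x)$. The paper works with a general line $l\subseteq F_{x}$ (computing $\tau^{*}H_{Z}\cdot l=1$, $E_{Z}\cdot l=b$), while you globalize the same computation to say that $\tau(F_{x})=\Lambda_{x}$ is a linearly embedded $\mathbb{P}^{e-1}$ meeting $Y$ in a degree-$b$ hypersurface; this is a repackaging, not a new idea. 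The intermediate steps are where you genuinely diverge, and your versions mostly buy self-containedness: the paper asserts, citing the analogue in Ein--Shepherd-Barron, that $D_{\mathbb{P}}$ is a divisor and $\tau|_{E_{\mathbb{P}}}$ is birational onto it, whereas you deduce this from $\rho(W)=2$, $\rho(Z)=1$; the paper gets the degree from the projection formula $\deg(D_{\mathbb{P}})=E_{\mathbb{P}}\cdot(\tau^{*}H_{Z})^{r-1}$, which carries the same information as your coefficient comparison; and for $Y\subseteq D_{\mathbb{P}}$ the paper intersects $E_{\mathbb{P}}$ with curves $C$ in fibers over points of $Y$ (getting $E_{\mathbb{P}}\cdot C=-2E_{Z}\cdot C>0$), while you use positivity of the exceptional coefficient of $\tau^{*}D_{\mathbb{P}}$. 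Two caveats. First, your inference ``$\ell=2>0$, hence $Y=\tau(E_{Z})\subseteq D_{\mathbb{P}}$'' literally yields only $\tau(E)\subseteq D_{\mathbb{P}}$ for the unique exceptional prime divisor $E$; to conclude you need $\tau(E)=Y$, i.e.\ $\mathrm{Supp}(E_{Z})=E=\mathrm{Exc}(\tau)$, which follows from Zariski's main theorem (a finite fiber over $y\in Y$ would make $\phi^{-1}=\sigma\circ\tau^{-1}$ defined at $y$) together with purity of the exceptional locus; this is standard but should be said, and the paper's curve argument sidesteps it. Second, your claim that $Y\cap\Lambda_{x}$ spans $\Lambda_{x}$ does not follow from completeness of the linear system: a priori the degree-$b$ form cutting $Y\cap\Lambda_{x}$ could be the $b$-th power of a linear form, so that set-theoretically $Y\cap\Lambda_{x}$ is a hyperplane and $\mathrm{Sec}(Y\cap\Lambda_{x})\subsetneq\Lambda_{x}$. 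The paper's own phrase ``$\tau(l)$ intersects $Y$ at $b$ points'' glosses over exactly the same degenerate possibility, so this is a shared rather than an added imprecision; it can be repaired by noting that a line whose scheme-theoretic intersection with $Y$ has length at least $2$ at a smooth point of $Y$ is a limit of secant lines, hence lies in $\mathrm{Sec}(Y)$.
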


\begin{proof}
The discussion is an analogue of \cite[Prop. 2.3]{ES89}. Remark that $\tau|_{E_{\mathbb{P}}}: E_{\mathbb{P}}\rightarrow D_{\mathbb{P}}$ is birational, and $D_{\mathbb{P}}\subseteq Z$ is a divisor. By the projection formula and Proposition \ref{X is QEL and Sec(X) is hypersurface}$(iii)$,
$$\deg(D_{\mathbb{P}})=D_{\mathbb{P}}\cdot H_{Z}^{r-1}=E_{\mathbb{P}}\cdot(\tau^{*}H_{Z})^{r-1}=((2b-1)\tau^{*}H_{Z}-2E_{Z})\cdot(\tau^{*}H_{Z})^{r-1}=(2b-1)\deg(Z).$$

Take a general point $y\in Y$ and an irreducible curve $C\subseteq\tau^{-1}(y)$. Then
$$E_{\mathbb{P}}\cdot C=((2b-1)\tau^{*}H_{Z}-2E_{Z})\cdot C=-2E_{Z}\cdot C>0.$$
In particular, $E_{\mathbb{P}}\cap\tau^{-1}(y)\neq\emptyset$. Hence, $y\in\tau(E_{\mathbb{P}})$ and $Y\subseteq \tau(E_{\mathbb{P}})=D_{\mathbb{P}}$.

Let $z\in D_{\mathbb{P}}$ be a general point. Then $\tau^{-1}(z)=\{w\}\subseteq E_{\mathbb{P}}$. Moreover, $L=\sigma^{-1}\sigma(w)$ is a linear space contained in $E_{\mathbb{P}}$, and $\tau|_{L}: L\rightarrow\tau(L)$ is birational. Take a general line $l$ in $L$ passing through $w$. Then
\begin{eqnarray*}
&&E_{Z}\cdot l=((2b-1)\sigma^{*}H_{\mathbb{P}}-bE_{\mathbb{P}})\cdot l=b, \\
&&H_{Z}\cdot\tau(l)=\tau^{*}H_{Z}\cdot l=(2\sigma^{*}H_{\mathbb{P}}-E_{\mathbb{P}})\cdot l=1.
\end{eqnarray*}
Thus, $\tau(l)$ is a line contained in $D_{\mathbb{P}}$ passing through $z$, and it intersects $Y$ at $b$ points. By assumption, $b\geq 2$. Hence, $z\in\text{Sec}(Y)$ and $D_{\mathbb{P}}\subseteq\text{Sec}(Y)$.
\end{proof}

\begin{prop} \label{Y is nondegenerate}
$Y$ is nondegenerate in $\mathbb{P}^{N}$.
\end{prop}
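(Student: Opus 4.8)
The plan is to argue by contradiction, exploiting the containment chain $Y\subseteq D_{\mathbb{P}}\subseteq\text{Sec}(Y)$ together with the degree computation $\deg(D_{\mathbb{P}})=(2b-1)\deg(Z)$, both of which are supplied by Proposition \ref{D is divisor degree 2b-1 and Y<=D<=Sec(Y)}. Suppose for contradiction that $Y$ is degenerate, so that $Y\subseteq\Lambda$ for some hyperplane $\Lambda\subsetneq\mathbb{P}^{N}$. The first observation is that the secant variety of a subset of a linear space is again contained in that linear space: every secant line of $Y$ joins two points of $\Lambda$ and hence lies in $\Lambda$, and since $\Lambda$ is closed we get $\text{Sec}(Y)\subseteq\Lambda$. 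Combined with $D_{\mathbb{P}}\subseteq\text{Sec}(Y)$, this yields $D_{\mathbb{P}}\subseteq Z\cap\Lambda$.

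Next I would compare dimensions and degrees. Since $Z$ is nondegenerate it is not contained in $\Lambda$, so $\Lambda$ cuts $Z$ properly and $Z\cap\Lambda$ is a divisor of $Z$ of pure dimension $r-1$ whose degree as a cycle equals $\deg(Z)$. On the other hand, $D_{\mathbb{P}}$ is an \emph{irreducible} divisor of $Z$: it is $\tau(E_{\mathbb{P}})$, the image of the irreducible exceptional divisor $E_{\mathbb{P}}$ (a projective bundle over the smooth connected, hence irreducible, center $X$), and it has dimension $r-1$ by Proposition \ref{D is divisor degree 2b-1 and Y<=D<=Sec(Y)}. Therefore $D_{\mathbb{P}}$ must occur as an irreducible component of $Z\cap\Lambda$, forcing $\deg(D_{\mathbb{P}})\leq\deg(Z\cap\Lambda)=\deg(Z)$. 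But Proposition \ref{D is divisor degree 2b-1 and Y<=D<=Sec(Y)} gives $\deg(D_{\mathbb{P}})=(2b-1)\deg(Z)\geq 3\deg(Z)$, since $b\geq 2$, which is the desired contradiction. Hence $Y$ is nondegenerate.

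The only point requiring any care is the degree comparison in the last step: one must ensure that the set-theoretic containment of the $(r-1)$-dimensional irreducible variety $D_{\mathbb{P}}$ inside $Z\cap\Lambda$ is controlled by degrees, which relies on $Z\cap\Lambda$ having pure dimension $r-1$ and cycle-degree $\deg(Z)$—and this is exactly what nondegeneracy of $Z$ guarantees. I do not expect any genuine obstacle here, as the two essential inputs are already packaged in Proposition \ref{D is divisor degree 2b-1 and Y<=D<=Sec(Y)}, and what remains is the elementary principle that an irreducible divisor of $Z$ of degree a multiple $(2b-1)\geq 3$ of $\deg(Z)$ cannot be contained in a hyperplane section of $Z$.
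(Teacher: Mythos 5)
Your proposal is correct and follows essentially the same route as the paper: both reduce nondegeneracy of $Y$ to nondegeneracy of $D_{\mathbb{P}}$ via the chain $Y\subseteq D_{\mathbb{P}}\subseteq\text{Sec}(Y)$ (you spell out the step $\text{Sec}(Y)\subseteq\Lambda$ that the paper leaves implicit), and both derive the contradiction from $\deg(D_{\mathbb{P}})=(2b-1)\deg(Z)\geq 3\deg(Z)$ versus $\deg(D_{\mathbb{P}})\leq\deg(Z\cap\Lambda)\leq\deg(Z)$.
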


\begin{proof}
By Proposition \ref{D is divisor degree 2b-1 and Y<=D<=Sec(Y)}, it suffices to show that $D_{\mathbb{P}}$ is nondegenerate in $\mathbb{P}^{N}$. We assume that there is a hyperplane $\mathbb{P}^{N-1}\subseteq\mathbb{P}^{N}$ containing $D_{\mathbb{P}}$. Then $\mathbb{P}^{N-1}\cap Z$ is of pure dimension $r-1$, and $\deg(D_{\mathbb{P}})\leq\deg(\mathbb{P}^{N-1}\cap Z)\leq\deg(Z)$. This contradicts Proposition \ref{D is divisor degree 2b-1 and Y<=D<=Sec(Y)}, since $b\geq 2$.
\end{proof}

Let $V\subseteq\mathbb{P}^{N}$ be a projective variety. Denote by $S_{V}$ the closure of the set of triples $(v_{1}, v_{2}, u)$ in $V\times V\times\mathbb{P}^{N}$ such that $v_{1}$ and $v_{2}$ are distinct points and $u\in\langle v_{1}, v_{2}\rangle$. Let $p_{i}$ be the $i$-th projection from $S_{V}$, and $p_{12}: S_{V}\rightarrow V\times V$ the projection to $V\times V$. For any point $u\in\text{Sec}(V)\backslash V$, denote by $\widetilde{\Sigma}_{u}=p_{1}(p_{3}^{-1}(u))$.
Remark that when $u\in\text{Sec}(V)\backslash V$ is a general point, we have $\widetilde{\Sigma}_{u}=\Sigma_{u}$.

\begin{lem} \label{C_z intersects Z in the union of Y and z}
Assume $b=2$. For any point (if exists) $z\in (\text{Sec}(Y)\cap Z)\backslash D_{\mathbb{P}}$, we have $C_{z}\cap Z\subseteq Y\cup\{z\}$. In particular, $\{z\}$ is the unique irreducible component of $C_{z}\cap Z$ containing $z$.
\end{lem}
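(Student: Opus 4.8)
The plan is to show that, for $b=2$, the inverse map $\phi^{-1}$ collapses every secant line of $Y$ passing through $z$ to a single point, and then to use the birationality of $\phi^{-1}$ together with the blow-up structure of $\sigma$ and $\tau$ to forbid a second point of $Z$ lying on such a line outside $Y$. First I would record that, since $z\notin D_{\mathbb{P}}$ and $Y\subseteq D_{\mathbb{P}}$ by Proposition \ref{D is divisor degree 2b-1 and Y<=D<=Sec(Y)}, we have $z\notin Y$, so $\phi^{-1}$ is a morphism at $z$.

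Because $b=2$, we may write $\phi^{-1}=[G_0:\cdots:G_r]$ with $G_0,\dots,G_r$ quadratic forms on $\mathbb{P}^N$ whose common zero scheme on $Z$ is $Y$; this is consistent with the relation $\sigma^{*}H_{\mathbb{P}}=2\tau^{*}H_{Z}-E_{Z}$ of Proposition \ref{X is QEL and Sec(X) is hypersurface}$(iii)$, which says the system defining $\phi^{-1}$ is cut by quadrics through $Y$. Since $z\notin Y$, some $G_{j_0}(z)\neq 0$. Let $\ell$ be any line through $z$ meeting $Y$ in a subscheme of length $\geq 2$ (in particular, any secant line of $Y$ through $z$). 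Restricting to $\ell\cong\mathbb{P}^{1}$, the form $G_{j_0}|_{\ell}$ is a nonzero section of $\mathcal{O}_{\mathbb{P}^{1}}(2)$ vanishing on $\ell\cap Y$; as a nonzero quadratic form on $\mathbb{P}^{1}$ has exactly two zeros, $\ell\cap Y$ has length exactly $2$ and $G_{j_0}|_{\ell}=c_{j_0}q$, where $q$ is the section of $\mathcal{O}_{\mathbb{P}^{1}}(2)$ with divisor $\ell\cap Y$. Every other $G_{j}|_{\ell}$ also vanishes on $\ell\cap Y$, hence $G_{j}|_{\ell}=c_{j}q$ for suitable constants $c_{j}$. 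Therefore $\phi^{-1}$ is constant, equal to $p:=[c_0:\cdots:c_r]$, on all of $(\ell\cap Z)\setminus Y$.

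Next I would exploit birationality. Suppose $(\ell\cap Z)\setminus Y$ contained a point $w\neq z$. Since $\tau$ is an isomorphism over $Z\setminus Y$, the points $\tilde{z}=\tau^{-1}(z)$ and $\tilde{w}=\tau^{-1}(w)$ are distinct, and both satisfy $\sigma(\tilde{z})=\sigma(\tilde{w})=p$. As $\sigma$ is the blow-up of $\mathbb{P}^{r}$ along $X$, it is an isomorphism over $\mathbb{P}^{r}\setminus X$, so if $p\notin X$ then $\sigma^{-1}(p)$ is a single point, forcing $\tilde{z}=\tilde{w}$, a contradiction. Hence $p\in X$, so $\sigma^{-1}(p)\subseteq E_{\mathbb{P}}$, and therefore $z=\tau(\tilde{z})\in\tau(E_{\mathbb{P}})=D_{\mathbb{P}}$, contradicting $z\notin D_{\mathbb{P}}$. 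This shows $(\ell\cap Z)\setminus Y=\{z\}$, that is, $\ell\cap Z\subseteq Y\cup\{z\}$, for every line $\ell$ of the above type.

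Finally, $C_{z}$ is covered by the secant lines of $Y$ through $z$ together with their limits; by semicontinuity of intersection length, each such limit line still meets $Y$ in length $\geq 2$, so the computation above applies to it verbatim. Taking the union over all these lines yields $C_{z}\cap Z\subseteq Y\cup\{z\}$, and since $z\notin Y$ with $Y$ closed, $\{z\}$ is forced to be the unique irreducible component of $C_{z}\cap Z$ through $z$, giving the ``in particular'' assertion. The step I expect to require the most care is the reduction to lines meeting $Y$ in length exactly $2$: one must control the tangent (limit) secant lines appearing in the closure $C_{z}$ and confirm that the defining forms of $\phi^{-1}$ may be taken as honest quadrics on the ambient $\mathbb{P}^{N}$, so that the restriction-to-$\ell$ argument is legitimate. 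Granting this, the collapse-plus-birationality mechanism is robust and handles both the case $\ell\subseteq Z$ and $\ell\not\subseteq Z$ uniformly.
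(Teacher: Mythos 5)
Your proposal follows essentially the same route as the paper's proof: all defining quadrics of $\phi^{-1}$ restrict proportionally to any line meeting $Y$ in a length~$\geq 2$ subscheme, so $\phi^{-1}$ collapses $(\ell\cap Z)\setminus Y$ to a single point, and birationality (phrased by you through the blow-up structure of $\sigma$ and $\tau$, by the paper through ``$\phi^{-1}(z)$ lies in the base locus $X$'') forces that point into $X$, whence $z\in\tau\sigma^{-1}(X)=D_{\mathbb{P}}$, a contradiction; your handling of limit secant lines via semicontinuity is just a more explicit version of the paper's ``secant line or limit of secant lines.'' The caveat you flag --- that the forms defining $\phi^{-1}$ restrict to genuine degree-two forms on $\ell$ even when $\ell\not\subseteq Z$ --- is likewise left implicit in the paper's one-line justification, so it does not separate your argument from the original.
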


\begin{proof}
By Proposition \ref{D is divisor degree 2b-1 and Y<=D<=Sec(Y)}, $Y\subseteq D_{\mathbb{P}}$ and $z\notin Y$. Hence, $\widetilde{\Sigma}_{z}$ is well-defined. Take an arbitrary point $y\in\widetilde{\Sigma}_{z}$. Then the line $l=\langle y, z\rangle$ is either a secant line or a tangent line of $Y$.

To get the conclusion, it suffices to show $l\cap Z\subseteq Y\cup\{z\}$. Assume that there is a point $z'\in (l\cap Z)\backslash (Y\cup\{z\})$. Then $\phi^{-1}(z)=\phi^{-1}(z')$, since $\phi^{-1}$ is defined by quadric hypersurfaces, and $l$ is either a secant line or a limit of secant lines of $Y$. This implies that $\phi^{-1}(z)$ lies in the base locus $X$ of $\phi$. Hence, $z\in\tau\sigma^{-1}(X)=D_{\mathbb{P}}$, which contradicts the choice of $z$.
\end{proof}

\begin{cor} \label{D is the intersection of Z and Sec(Y)}
Assume $b=2$. If $Z$ is a quadratic manifold, then $\text{Sec}(Y)\cap Z=D_{\mathbb{P}}$.
\end{cor}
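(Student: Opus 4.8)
The plan is to prove the two inclusions of $\text{Sec}(Y)\cap Z=D_{\mathbb{P}}$ separately. One inclusion is free: Proposition \ref{D is divisor degree 2b-1 and Y<=D<=Sec(Y)} already gives $D_{\mathbb{P}}\subseteq Z$ and $D_{\mathbb{P}}\subseteq\text{Sec}(Y)$, hence $D_{\mathbb{P}}\subseteq\text{Sec}(Y)\cap Z$. So the entire content lies in the reverse inclusion $\text{Sec}(Y)\cap Z\subseteq D_{\mathbb{P}}$, which I would establish by contradiction. Suppose there exists $z\in(\text{Sec}(Y)\cap Z)\backslash D_{\mathbb{P}}$. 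Since $Y\subseteq D_{\mathbb{P}}$ by Proposition \ref{D is divisor degree 2b-1 and Y<=D<=Sec(Y)}, such a $z$ lies in $\text{Sec}(Y)\backslash Y$; in particular the projection $p_{3}\colon S_{Y}\to\text{Sec}(Y)$ is surjective, so $\widetilde{\Sigma}_{z}=p_{1}(p_{3}^{-1}(z))$ is nonempty. Fix $y\in\widetilde{\Sigma}_{z}$ and set $l=\langle y,z\rangle$. As observed in the proof of Lemma \ref{C_z intersects Z in the union of Y and z}, the line $l$ is either a secant line or a tangent line of $Y$, with $y\in Y$ and $z\neq y$ (because $z\notin Y$).

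The key step is to use the hypothesis that $Z$ is a quadratic manifold to deduce $l\subseteq Z$. Since $Z$ is the scheme-theoretic intersection of quadric hypersurfaces, it suffices to check $l\subseteq Q$ for every quadric $Q$ containing $Z$, and for this I would compare the degree-$2$ divisor $Q|_{l}$ on $l\cong\mathbb{P}^{1}$ with the intersection data forced by $Y\cup\{z\}\subseteq Z\subseteq Q$. If $l$ is a genuine secant line, it passes through two distinct points $y_{1},y_{2}\in Y$ and through $z$, three distinct points of $Q$ (as $z\notin Y$), so $Q|_{l}$ has degree $\geq 3>2$ unless $l\subseteq Q$. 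If instead $l$ is tangent to $Y$ at $y$, then $l\subseteq\mathbb{T}_{y}Y\subseteq\mathbb{T}_{y}Z\subseteq\mathbb{T}_{y}Q$, so $Q|_{l}$ vanishes to order $\geq 2$ at $y$; together with the extra zero at $z\neq y$ this again forces $\deg(Q|_{l})\geq 3$ and hence $l\subseteq Q$. In either case $l\subseteq Q$ for all such $Q$, so $l\subseteq Z$.

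Finally I would feed $l\subseteq Z$ back into Lemma \ref{C_z intersects Z in the union of Y and z}. Since the secant cone $C_{z}$ is the union of the lines $\langle y,z\rangle$ over $y\in\widetilde{\Sigma}_{z}$, the line $l$ lies in $C_{z}$, so $l\subseteq C_{z}\cap Z$. By Lemma \ref{C_z intersects Z in the union of Y and z} we have $C_{z}\cap Z\subseteq Y\cup\{z\}$, whence $l\subseteq Y\cup\{z\}$; as $l$ is irreducible of dimension one and $z\notin Y$, this forces $l\subseteq Y$ and therefore $z\in Y$, contradicting $z\notin D_{\mathbb{P}}\supseteq Y$ (equivalently, $l$ is a positive-dimensional component of $C_{z}\cap Z$ through $z$, contradicting that $\{z\}$ is the unique such component). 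This contradiction shows that no such $z$ exists, giving $\text{Sec}(Y)\cap Z\subseteq D_{\mathbb{P}}$ and completing the proof. The main obstacle is the middle step: verifying that the quadratic-manifold hypothesis genuinely forces $l\subseteq Z$, and in particular handling the tangent case through the tangent-space chain $\mathbb{T}_{y}Y\subseteq\mathbb{T}_{y}Q$ and the resulting length count, which is exactly where the assumption "quadratic manifold" (rather than merely "cut out by hypersurfaces of higher degree") is essential.
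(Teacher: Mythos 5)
Your first two steps coincide with the paper's proof: the inclusion $D_{\mathbb{P}}\subseteq\text{Sec}(Y)\cap Z$ comes for free from Proposition \ref{D is divisor degree 2b-1 and Y<=D<=Sec(Y)}, and the key use of the quadratic-manifold hypothesis --- a quadric containing $Z$ restricts to $l$ as a degree-$2$ form with at least three zeros counted with multiplicity, forcing $l\subseteq Z$ --- is exactly the paper's argument, with your tangent-case analysis being a correct elaboration of what the paper leaves implicit. The gap is in your final step. You assert that $C_{z}$ equals $\bigcup_{y\in\widetilde{\Sigma}_{z}}\langle y,z\rangle$, hence that $l\subseteq C_{z}$, so that the statement of Lemma \ref{C_z intersects Z in the union of Y and z} applies to $l$. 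Only one inclusion holds in general, namely $C_{z}\subseteq\bigcup_{y\in\widetilde{\Sigma}_{z}}\langle y,z\rangle$. The secant cone $C_{z}$ is by definition the closure of the union of \emph{genuine} secant lines passing through $z$, whereas $\widetilde{\Sigma}_{z}=p_{1}(p_{3}^{-1}(z))$ also contains points $y$ arising from limits of triples $(y_{i},y_{i}',u_{i})$ in which the secant lines $\langle y_{i},y_{i}'\rangle$ pass through $u_{i}$, not through $z$; the limit line $\langle y,z\rangle$ is then a limit of secant lines but not of secant lines through $z$, so it need not lie in $C_{z}$. Indeed, for special $z$ there may be no genuine secant line through $z$ at all (for instance, a general point on the tangent developable of a twisted cubic lies on a tangent line but on no secant line), in which case $C_{z}$ is strictly smaller than your union. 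So the justification of $l\subseteq C_{z}$ fails precisely in the tangent case that you were otherwise careful to handle.

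The gap is minor and can be closed in either of two ways. First, what you actually need is $l\cap Z\subseteq Y\cup\{z\}$ for every $y\in\widetilde{\Sigma}_{z}$, and this is literally the sub-claim established inside the proof of Lemma \ref{C_z intersects Z in the union of Y and z}: that proof reduces the lemma to showing $l\cap Z\subseteq Y\cup\{z\}$ for an arbitrary $y\in\widetilde{\Sigma}_{z}$ and then does so; citing that claim instead of the lemma's statement makes your contradiction $l\subseteq Y\cup\{z\}$ go through verbatim. Second, you can finish as the paper does, bypassing the lemma entirely: since $\phi^{-1}$ is defined by quadric hypersurfaces, these quadrics restrict to the line $l\subseteq Z$ as degree-$2$ forms vanishing on the length-two (or longer) scheme $l\cap Y$, so $\phi^{-1}(l\setminus(l\cap Y))$ is a single point; this places $l$ inside $\tau\sigma^{-1}(X)=D_{\mathbb{P}}$, contradicting $z\notin D_{\mathbb{P}}$ directly. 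Note that the paper runs this argument without a contradiction setup, proving $\text{Sec}(Y)\cap Z\subseteq D_{\mathbb{P}}\cup Y$ and then using $Y\subseteq D_{\mathbb{P}}$; your proof-by-contradiction framing is equivalent once the gap above is patched.
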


\begin{proof}
Take any point $z\in (\text{Sec}(Y)\cap Z)\backslash Y$ and any point $y\in\widetilde{\Sigma}_{z}$. Then the line $l=\langle y, z\rangle$ is either a secant line or a tangent line of $Y$. Since $Z$ is a quadratic manifold, and $l$ intersects $Z$ at three or more points (counted with multiplicities), we get that $l\subseteq Z$. Since $\phi^{-1}$ is defined by quadric hypersurfaces, $\phi^{-1}(l\backslash (l\cap Y))$ is a point. Then $l\subseteq\tau\sigma^{-1}(X)=D_{\mathbb{P}}$. Thus, $\text{Sec}(Y)\cap Z \subseteq D_{\mathbb{P}}\cup Y$. By Proposition \ref{D is divisor degree 2b-1 and Y<=D<=Sec(Y)}, $Y\subseteq D_{\mathbb{P}}\subseteq\text{Sec}(Y)$. Hence, $\text{Sec}(Y)\cap Z=D_{\mathbb{P}}$.
\end{proof}

\begin{lem} \label{c>= n-2 delta}
Assume $b=2$. If $Y$ is smooth, then $c\geq n-2\delta$.
\end{lem}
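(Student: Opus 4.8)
The plan is to bound the codimension of $Y$ in $\mathbb{P}^{N}$ from below by studying $\mathrm{Sec}(Y)$. By Proposition \ref{X is QEL and Sec(X) is hypersurface} we have $r=2n+2-\delta$ and $m=2n-2\delta$, so the desired inequality $c\geq n-2\delta$ is equivalent to $N=r+c\geq 3n+2-3\delta$; equivalently, the codimension of $Y$ in $\mathbb{P}^{N}$ must be at least that of $X$ in $\mathbb{P}^{r}$. The idea is to exhibit a large family of quadrics inside $Y$ coming from the resolution $W$, to identify them with the entry loci of $Y$, and finally to use that $\mathrm{Sec}(Y)$ is a proper subvariety of $\mathbb{P}^{N}$.

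For $x\in X$ put $M_{x}:=\tau(\sigma^{-1}(x))$. The fibre $\sigma^{-1}(x)\cong\mathbb{P}^{r-n-1}=\mathbb{P}^{n+1-\delta}$ of the exceptional divisor is a linear space, and the computation in the proof of Proposition \ref{D is divisor degree 2b-1 and Y<=D<=Sec(Y)} shows that $\tau$ sends its lines $l$ to lines of $Z$, since $\tau^{*}H_{Z}\cdot l=1$; hence $M_{x}\cong\mathbb{P}^{n+1-\delta}$ is a linear subspace of $Z$ contained in $D_{\mathbb{P}}$. As $E_{Z}=\tau^{-1}(Y)$ and $E_{Z}\cdot l=b=2$ for such lines, the intersection $Q_{x}:=M_{x}\cap Y$ is a quadric hypersurface of dimension $n-\delta$ spanning $M_{x}$, smooth for general $x$ because $Y$ is smooth. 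Thus $Y$ carries an $n$-dimensional family of $(n-\delta)$-dimensional quadrics $Q_{x}$, each with $\mathrm{Sec}(Q_{x})=M_{x}\subseteq Z$.

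The next step is to show that $Y$ is a QEL-manifold with $\delta(Y)=n-\delta$, i.e. that the $Q_{x}$ are the general entry loci of $Y$. This should use that $\phi^{-1}$ is defined by quadrics, so that any secant or tangent line of $Y$ lying on $Z$ is contracted by $\phi^{-1}$ to a point of $X$ (compare the proof of Corollary \ref{D is the intersection of Z and Sec(Y)}); tracing this through the correspondence $W$ identifies the locus of $Y$ joined to a general point of $D_{\mathbb{P}}$ with a quadric $Q_{x}$. Granting this, $\dim\mathrm{Sec}(Y)=2m+1-\delta(Y)=3n+1-3\delta$, so by Terracini's lemma $N\geq\dim\mathrm{Sec}(Y)=3n+1-3\delta$, and the sharp bound $N\geq 3n+2-3\delta$ follows as soon as $\mathrm{Sec}(Y)\neq\mathbb{P}^{N}$. (When $\delta\geq n$ the inequality $c\geq n-2\delta$ is trivial, so I may assume $n-\delta>0$ throughout.)

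The main obstacle is precisely this last point, together with making the identification of entry loci rigorous: one must pass from the secant structure of $Y$ internal to $Z$ (carried by the $Q_{x}\subseteq D_{\mathbb{P}}$) to the global secant variety in $\mathbb{P}^{N}$, controlling those secant lines of $Y$ that leave $Z$. To exclude $\mathrm{Sec}(Y)=\mathbb{P}^{N}$ I would argue by contradiction: if it held, then $Y$ would be a QEL-manifold of positive secant defect $n-\delta$ with $\mathrm{Sec}(Y)=\mathbb{P}^{N}$, hence by Corollary \ref{classification QEL (not prime Fano) when Sec(V)=P^r and delta>0} either a prime Fano manifold or one of the short list there. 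Each possibility is then tested against the constraints $Y\subseteq Z\subsetneq\mathbb{P}^{N}$, $\dim Y=2n-2\delta$ and $\delta(Y)=n-\delta=\tfrac{m}{2}$: the non prime-Fano cases have very small secant defect and are excluded on dimension grounds, while in the remaining case the inclusion $Z\setminus D_{\mathbb{P}}\subseteq\mathrm{Sec}(Y)$ would contradict Lemma \ref{C_z intersects Z in the union of Y and z}, since a general point of $Z$ would then have to be isolated in the intersection of its own secant cone with $Z$. Combining $\dim\mathrm{Sec}(Y)=3n+1-3\delta$ with $\mathrm{Sec}(Y)\neq\mathbb{P}^{N}$ yields $N\geq 3n+2-3\delta$, that is $c\geq n-2\delta$.
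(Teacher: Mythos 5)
Your reduction of the statement to the bound $N\geq 3n+2-3\delta$ is correct, and the quadrics $Q_{x}=\tau(\sigma^{-1}(x))\cap Y$ you construct do exist, but the proof has a genuine gap exactly where you flag it, and that gap is the whole difficulty rather than a technicality. The quadrics $Q_{x}$ live inside the linear spaces $M_{x}\subseteq Z$, so they only record secant lines of $Y$ that lie on $Z$; a general point of $\text{Sec}(Y)$ lies outside $Z$, and nothing in the present setting (the lemma sits in Section \ref{section: type (2, b)}, where $Z$ is merely a prime Fano manifold: no complete intersection or quadratic hypothesis is available, so even the tool of Corollary \ref{D is the intersection of Z and Sec(Y)} cannot be invoked) lets you control secant lines of $Y$ through such a point. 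Hence the assertion that the $Q_{x}$ are the general entry loci, so that $Y$ is a QEL-manifold with $\delta(Y)=n-\delta$, is unproven, and everything downstream (the value $\dim\text{Sec}(Y)=3n+1-3\delta$, the case analysis via Corollary \ref{classification QEL (not prime Fano) when Sec(V)=P^r and delta>0}) rests on it. Moreover, even granting that claim, your exclusion of $\text{Sec}(Y)=\mathbb{P}^{N}$ is incomplete: Lemma \ref{C_z intersects Z in the union of Y and z} tells you that $\{z\}$ is a component of $C_{z}\cap Z$ for $z\in Z\backslash D_{\mathbb{P}}$, but isolation by itself is no contradiction. The contradiction must come from the projective dimension theorem --- every component of $C_{z}\cap Z$ through $z$ has dimension at least $\dim(C_{z})+\dim(Z)-N$ --- together with a lower bound on $\dim(C_{z})$, and you never carry out this count.

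The paper's proof shows that the entire QEL apparatus is unnecessary, because Zak's linear normality theorem (Proposition \ref{sec(V) neq P^r then delta<=n/2}) converts the single statement ``$\text{Sec}(Y)\neq\mathbb{P}^{N}$'' into the desired inequality: since $m=2n-2\delta$ and $Y$ is smooth and nondegenerate (Proposition \ref{Y is nondegenerate}), the Zak bound $m\leq\frac{2}{3}(N-2)$ is literally equivalent to $c\geq n-2\delta$. So the paper assumes $c<n-2\delta$; then $m>\frac{2}{3}(N-2)$, whence Zak forces $\text{Sec}(Y)=\mathbb{P}^{N}$. Now any $z\in Z\backslash D_{\mathbb{P}}$ lies in $\text{Sec}(Y)$, and a fiber-dimension (semicontinuity) estimate that requires no knowledge of entry loci gives $\dim(C_{z})\geq\delta(Y)+1=2m+2-N=2n-3\delta-c$; taking a maximal component $M$ of the cone $C_{z}$ (which contains the vertex $z$), every component of $M\cap Z$ through $z$ has dimension at least $\dim(M)+r-N\geq 2n-3\delta-2c>\delta\geq 0$, contradicting the isolation of $z$ supplied by Lemma \ref{C_z intersects Z in the union of Y and z}. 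So your instinct to use Lemma \ref{C_z intersects Z in the union of Y and z} is the right one, but it must be paired with Zak's theorem and this dimension count; that combination replaces your unproved identification of entry loci entirely.
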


\begin{proof}
By Proposition \ref{X is QEL and Sec(X) is hypersurface}$(i)(iv)$, $r=2n+2-\delta$ and $m=2n-2\delta$. Assume that $c<n-2\delta$. Then $m>\frac{2}{3}(N-2)$. Since $Y$ is smooth and nondegenerate, the secant variety $\text{Sec}(Y)=\mathbb{P}^{N}$ by Proposition \ref{sec(V) neq P^r then delta<=n/2}. In particular, $Z\subseteq\text{Sec}(Y)$. Take an arbitrary point $z\in Z\backslash D_{\mathbb{P}}$. By Lemma \ref{C_z intersects Z in the union of Y and z}, $\{z\}$ is the unique irreducible component of $C_{z}\cap Z$ containing $z$.
Let $M$ be a maximal dimensional irreducible component of $C_{z}$. Then
\begin{eqnarray*}
\dim(M)=\dim(C_{z})=\dim(\Sigma_{z})+1\geq\delta(Y)+1=2m+2-\dim(\text{Sec}(Y))=2n-3\delta-c.
\end{eqnarray*}
Since $C_{z}$ is a cone over the vertex $z$, we get $z\in M\cap Z$. Thus, $\{z\}$ is an irreducible component of $M\cap Z$. Hence,
\begin{eqnarray*}
\dim\{z\}\geq \dim(M)+\dim(Z)-\dim(\mathbb{P}^{N})\geq 2n-3\delta-2c>\delta\geq 0,
\end{eqnarray*}
which is a contradiction.
\end{proof}

\subsection{\normalsize Cases with large $\delta$}

Remark that $X$ is nondegenerate in $\mathbb{P}^{r}$ and $\text{Sec}(X)\neq\mathbb{P}^{r}$ by Proposition \ref{X is QEL and Sec(X) is hypersurface}$(i)$. Thus, $\delta\leq\frac{n}{2}$ by Proposition \ref{sec(V) neq P^r then delta<=n/2}. If $\delta=\frac{n}{2}$, then $X\subseteq\mathbb{P}^{r}$ is a Severi variety by definition. In this case, we know the following result due to Ein and Shepherd-Barron.

\begin{prop} (\cite[Thm. 2.6]{ES89}) \label{Z=P^r if and only if X is Severi}
Let $\Phi: \mathbb{P}^{r}\dashrightarrow\mathbb{P}^{r}$ be a special birational transformation of type $(a, b)$ with base locus scheme $V$.
Then $(a, b)=(2, 2)$ if and only if $V\subseteq\mathbb{P}^{r}$ is a Severi variety. Moreover, if $(a, b)=(2, 2)$ then $\Phi^{-1}$ is also a special birational transformation of type $(2, 2)$.
\end{prop}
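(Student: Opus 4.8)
The plan is to prove the two implications of the equivalence separately and then extract the ``moreover'' assertion from the second one. Set $n=\dim V$, $\delta=\delta(V)$, and write $\Phi=[q_{0}:\cdots:q_{r}]$ with the $q_{i}$ quadrics vanishing on $V$. For the implication $(2,2)\Rightarrow$ Severi, I would first note that $\Phi$ contracts every secant line of $V$: a general secant line $\ell$ meets $V$ in two points, each $q_{i}|_{\ell}$ is a binary quadratic form vanishing at those two points, so all the $q_{i}|_{\ell}$ are proportional and $\Phi|_{\ell}$ is constant. Hence $\Phi$ contracts $\mathrm{Sec}(V)$, forcing $\mathrm{Sec}(V)\neq\mathbb{P}^{r}$, and Proposition \ref{sec(V) neq P^r then delta<=n/2} gives $\delta\leq n/2$. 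As in Proposition \ref{X is QEL and Sec(X) is hypersurface}, whose analysis applies verbatim to the target $Z=\mathbb{P}^{r}$, the variety $V$ is a quadratic QEL-manifold, $\mathrm{Sec}(V)$ is a cubic hypersurface, and $r=2n+2-\delta$.

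The decisive step is a VMRT dimension count. Since the target is $Z=\mathbb{P}^{r}$, its variety of minimal rational tangents at a general point is the whole space $Z^{(1)}=\mathbb{P}^{r-1}$, of dimension $r-1$; on the other hand Proposition \ref{L_z smooth and (when b=2) covered by lines}$(ii)$ with $b=2$ gives $\dim Z^{(1)}=n+\delta+1$. Combined with $r=2n+2-\delta$, the identity $n+\delta+1=2n+1-\delta$ forces $\delta=n/2$, whence $n=\tfrac{2}{3}(r-2)$ while $\mathrm{Sec}(V)\neq\mathbb{P}^{r}$, so $V$ is a Severi variety by definition.

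For the converse, I would reduce to the four cases of Zak's classification (Proposition \ref{classification of Severi varieties}). In each case $\delta=n/2$, $\mathrm{Sec}(V)$ is a cubic hypersurface, and the system $|\mathcal{I}_{V}(2)|$ of quadrics through $V$ has projective dimension exactly $r$ (indeed $6,9,15,27$ quadrics cut out the four Severi varieties), so it defines a rational map $\Phi:\mathbb{P}^{r}\dashrightarrow\mathbb{P}^{r}$. These are precisely the adjoint maps of the rank-three Jordan algebras of Hermitian $3\times 3$ matrices over $\mathbb{R},\mathbb{C},\mathbb{H},\mathbb{O}$; since the adjoint satisfies $(A^{\#})^{\#}=\det(A)\,A$, the map $\Phi$ is a projective involution, in particular birational with $\Phi^{-1}$ again defined by quadrics. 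Because $\Phi^{-1}$ is projectively equivalent to $\Phi$, its base locus $Y$ is projectively equivalent to $V$, hence smooth and connected, so $\Phi^{-1}$ is special of type $(2,2)$. This yields both the converse and the ``moreover'' statement at once.

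The forward implication is clean once the VMRT identity is in hand; the subtle point is the ``moreover'' assertion, where one cannot deduce the smoothness of $Y$ simply by applying the forward implication to $\Phi^{-1}$, since that already presupposes $Y$ smooth. I expect this apparent circularity to be the main obstacle, and the route above breaks it by importing the involutive structure from Zak's classification, so that $Y\cong V$ is smooth a priori. A classification-free alternative would be to combine $\dim Y=2n-2\delta=n$ (Proposition \ref{X is QEL and Sec(X) is hypersurface}$(iv)$) with a direct study of the fibers of $\tau$ over $Y$ to establish smoothness, and this is where I would anticipate the real work.
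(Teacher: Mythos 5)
The paper itself contains no proof of this statement: it is quoted directly from \cite[Thm.~2.6]{ES89}, so your attempt can only be measured against Ein--Shepherd-Barron's original argument, which is numerical (see below). Judged on that basis, your proposal has one clear logical gap and one step that is not justified as written. The clear gap is the converse direction. You are handed a special Cremona transformation $\Phi$ of \emph{unspecified} type $(a,b)$ whose base locus scheme $V$ happens to be a Severi variety, and you must show $(a,b)=(2,2)$; instead you construct \emph{some} type-$(2,2)$ map with base locus $V$ (the Jordan-algebra adjoint). Nothing in your argument excludes that the given $\Phi$ has $a\geq 3$: the count $h^{0}(\mathcal{I}_{V}(2))=r+1$ identifies $\Phi$ with the adjoint map only after one already knows $a=2$, and for $a\geq 3$ it says nothing. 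The missing ingredient is the canonical-class relation on $W$: writing $K_{W}=-(r+1)\sigma^{*}H_{\mathbb{P}}+(c-1)E_{\mathbb{P}}$ and $K_{W}=-(r+1)\tau^{*}H_{Z}+\gamma E_{Z}$ (the exceptional divisor of $\tau$ is irreducible by Proposition \ref{ein-shepherd pic =Z+Z then E irreducible}), and substituting $\tau^{*}H_{Z}=a\sigma^{*}H_{\mathbb{P}}-E_{\mathbb{P}}$ and $E_{Z}=(ab-1)\sigma^{*}H_{\mathbb{P}}-bE_{\mathbb{P}}$, one obtains $\frac{n+2}{b}=\frac{(r+1)(a-1)}{ab-1}$. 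For a Severi variety $r+1=\frac{3}{2}(n+2)$, so the relation collapses to $b(3-a)=2$, forcing $(a,b)=(2,2)$. This is, in substance, the Ein--Shepherd-Barron argument; it also repairs your ``moreover'' step, since once $a=2$ is known your identification of $\Phi$ with the involutive adjoint map (via $h^{0}(\mathcal{I}_{V}(2))=r+1$) is legitimate and gives $Y\cong V$ smooth and connected, exactly breaking the circularity you flagged.

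In the forward direction, your decisive step applies Proposition \ref{L_z smooth and (when b=2) covered by lines}$(ii)$ to $Z=\mathbb{P}^{r}$. In this paper that proposition sits under the standing assumptions of Section \ref{section: type (2, b)}, which require $Z\subsetneq\mathbb{P}^{N}$ to be nondegenerate, so $Z=\mathbb{P}^{r}$ is excluded; and the point is not cosmetic, because, as your own computation shows, the equality $\dim Z^{(1)}=n+\delta+1$ for $Z=\mathbb{P}^{r}$ is \emph{equivalent} to the desired conclusion $\delta=n/2$. As written, the entire content of the implication is delegated to a cited result applied outside its stated scope, which is uncomfortably close to assuming what is to be proved. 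The formula is in fact true for $Z=\mathbb{P}^{r}$, and the honest proof of it is again the computation above: $\dim Z^{(1)}=i(Z)-2$ for a Fano manifold covered by lines, and the two expressions for $K_{W}$ give $i(Z)=b(r+1)-(ab-1)(c-1)$, which for $(a,b)=(2,2)$ and $r=2n+2-\delta$ yields $n+\delta+1$; equivalently, with $i(\mathbb{P}^{r})=r+1$ one gets $r=\frac{3n+4}{2}$, i.e.\ $n=\frac{2}{3}(r-2)$, which together with your (correct) observation that the quadrics contract secant lines, hence $\mathrm{Sec}(V)\neq\mathbb{P}^{r}$, gives the Severi property directly. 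So either verify that \cite[Cor.~2.15]{AS13b} is proved in the generality $Z=\mathbb{P}^{N}$, or replace the citation by this short computation; without one of these, the forward implication has a genuine hole.
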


\begin{prop} \label{delta<n/2}
We have $\delta<\frac{n}{2}$.
\end{prop}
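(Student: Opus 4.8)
The plan is to prove the strict inequality directly from the dimension of the VMRT $Z^{(1)}$, without first reducing to the Severi case. We already know $\delta\leq\frac{n}{2}$ from Proposition \ref{sec(V) neq P^r then delta<=n/2} (as noted before the statement), so it suffices to rule out equality; in fact the argument will yield the sharper bound $\delta\leq\frac{n-1}{2}$ in one step. The two inputs are the formula $\dim(Z^{(1)})=n+(b-1)(\delta+1)$ from Proposition \ref{L_z smooth and (when b=2) covered by lines}(ii) and the identity $r=2n+2-\delta$ from Proposition \ref{X is QEL and Sec(X) is hypersurface}(i). Since $b\geq 2$ we get $\dim(Z^{(1)})\geq n+\delta+1$, while $Z^{(1)}\subseteq\mathbb{P}^{r-1}$ with $r-1=2n+1-\delta$. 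So the whole proof reduces to showing that $Z^{(1)}$ is a \emph{proper} subvariety of $\mathbb{P}^{r-1}$, i.e. $\dim(Z^{(1)})\leq r-2=2n-\delta$; the inequality $n+\delta+1\leq 2n-\delta$ then rearranges to $2\delta+1\leq n$, giving $\delta\leq\frac{n-1}{2}<\frac{n}{2}$.

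The main obstacle is therefore the single claim $Z^{(1)}\neq\mathbb{P}^{r-1}$, which I would prove by contradiction. Suppose the VMRT $Z^{(1)}=\mathcal{L}_{z}(Z)$ at a general point $z$ were all of $\mathbb{P}((T_{z}Z)^{*})=\mathbb{P}^{r-1}$. Recall that $Z^{(1)}$ is irreducible by Proposition \ref{L_z smooth and (when b=2) covered by lines}(i), and that $Z$ is covered by lines (it is a prime Fano manifold). Then through $z$ there is a line of $Z$ in every tangent direction, so the union of the lines of $Z$ through $z$ — the image of the proper universal line over $\mathcal{L}_{z}(Z)\cong\mathbb{P}^{r-1}$ — is an irreducible closed subvariety of dimension $r=\dim(Z)$ contained in $Z$, hence equal to $Z$. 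Thus $Z$ is a cone with vertex the (smooth) point $z$. But a projective variety that is a cone with vertex at a smooth point is a linear subspace, since its affine tangent cone at the vertex is the whole cone and, by smoothness, is linear. Hence $Z$ would be a linear $\mathbb{P}^{r}\subseteq\mathbb{P}^{N}$, contradicting the nondegeneracy of the proper subvariety $Z\subsetneq\mathbb{P}^{N}$ (where $\dim(Z)=r<N$). Therefore $Z^{(1)}\subsetneq\mathbb{P}^{r-1}$ and $\dim(Z^{(1)})\leq r-2$.

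Combining the two pieces, $n+\delta+1\leq\dim(Z^{(1)})\leq 2n-\delta$ yields $\delta<\frac{n}{2}$. I expect the cone/linear-space step to be the only delicate point; it may alternatively be phrased as the Cho--Miyaoka--Shepherd-Barron characterization of projective space by its VMRT. If instead one prefers to stay inside the Severi picture suggested by Proposition \ref{Z=P^r if and only if X is Severi}, one can assume $\delta=\frac{n}{2}$, note that $X$ is then a Severi variety so $\text{Sec}(X)$ is a cubic and $b=2$ by Proposition \ref{X is QEL and Sec(X) is hypersurface}(i), and observe that $\dim(Z^{(1)})=n+(\delta+1)=r-1$ again forces $Z=\mathbb{P}^{r}$, i.e. $c=0$, which is the same contradiction.
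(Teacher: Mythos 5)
Your proof is correct, and it takes a genuinely different route from the paper's. The paper's own argument is a two-step deduction from quoted results: Zak's theorem (Proposition \ref{sec(V) neq P^r then delta<=n/2}) gives $\delta\leq\frac{n}{2}$, and if equality held, $X$ would be a Severi variety by definition, so the Ein--Shepherd-Barron characterization (Proposition \ref{Z=P^r if and only if X is Severi}) would force $Z=\mathbb{P}^{r}$, contradicting the standing hypothesis that $Z\subsetneq\mathbb{P}^{N}$ is a nondegenerate proper subvariety. You instead bypass the Severi/Cremona input altogether: from $\dim(Z^{(1)})=n+(b-1)(\delta+1)\geq n+\delta+1$ (Proposition \ref{L_z smooth and (when b=2) covered by lines}(ii), using $b\geq 2$) and $r=2n+2-\delta$ (Proposition \ref{X is QEL and Sec(X) is hypersurface}(i)), the bound $\delta\leq\frac{n-1}{2}<\frac{n}{2}$ follows once one knows $Z^{(1)}\subsetneq\mathbb{P}^{r-1}$, and you prove this properness by an elementary cone argument: if $\mathcal{L}_{z}(Z)=\mathbb{P}^{r-1}$ at a general point $z$, the evaluation map from the universal family of lines is injective away from the fiber over $z$, so the union of the lines through $z$ is an irreducible closed $r$-dimensional subvariety of $Z$, hence all of $Z$; a cone that is smooth at its vertex equals its tangent cone there, i.e.\ is a linear space, and a linear $\mathbb{P}^{r}\subsetneq\mathbb{P}^{N}$ is degenerate, a contradiction. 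All of these steps check out, and your fallback alternative (assuming $\delta=\frac{n}{2}$ and noting $\dim(Z^{(1)})=r-1$ forces the same contradiction) is also sound. The trade-off: the paper's proof is a two-line consequence of results it has already quoted and keeps the Severi picture in view (which reappears in Proposition \ref{case delta> n/3} and afterwards), whereas yours replaces the deep ES89 Cremona-transformation theorem by the Alzati--Sierra dimension formula (also quoted in the paper) plus a self-contained geometric lemma, treats all $b\geq 2$ uniformly in one stroke, and makes transparent that the only place the hypothesis $Z\neq\mathbb{P}^{r}$ enters is through the nondegeneracy of $Z$.
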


\begin{proof}
By Proposition \ref{X is QEL and Sec(X) is hypersurface}$(i)(ii)$, $r=2n+2-\delta$, $X\subseteq\mathbb{P}^{r}$ is a nondegenerate quadratic QEL-manifold and $\text{Sec}(X)\neq\mathbb{P}^{r}$. Then $\delta\leq\frac{n}{2}$ by Proposition \ref{sec(V) neq P^r then delta<=n/2}. If $\delta=\frac{n}{2}$, then $X$ is a Severi variety by definition, and $Z=\mathbb{P}^{r}$ by Proposition \ref{Z=P^r if and only if X is Severi}. This contradicts our assumption $Z\neq\mathbb{P}^{r}$.
\end{proof}




\begin{prop} \label{classification of X when is not a prime Fano manifold}
Assume that $\delta>0$. Then either $X\subseteq\mathbb{P}^{r}$ is a prime Fano manifold with index $\frac{n+\delta}{2}$, or it is projectively equivalent to one of the following:

$(a)$ the second Veronese embedding $\upsilon_{2}(\mathbb{P}^{2})\subseteq\mathbb{P}^{5}$;

$(b)$ the Segre embedding $\mathbb{P}^{2}\times\mathbb{P}^{2}\subseteq\mathbb{P}^{8}$;

$(c)$ a nonsingular hyperplane of $(b)$;
\end{prop}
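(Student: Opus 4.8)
The plan is to identify $X$ as a conic-connected manifold, apply the Ionescu--Russo classification, and then cut the resulting list down using the fact that $\text{Sec}(X)$ is a hypersurface. By Proposition \ref{X is QEL and Sec(X) is hypersurface}$(ii)$, $X\subseteq\mathbb{P}^{r}$ is a nondegenerate, linearly normal QEL-manifold, and by hypothesis $\delta>0$. Since a QEL-manifold of positive secant defect is conic-connected (as observed just before Corollary \ref{classification QEL (not prime Fano) when Sec(V)=P^r and delta>0}), $X$ satisfies all the hypotheses of Proposition \ref{classification of conic-connected manifolds}. Hence either $X$ is a prime Fano manifold with $i(X)\geq\frac{n+1}{2}$, or $X$ is projectively equivalent to one of the four families $(i)$--$(iv)$ listed there.

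If $X$ is a prime Fano manifold, then it is in addition a QEL-manifold with $\delta>0$, so Proposition \ref{L^1(V) is QEL and Divisibility Theorem}$(iv)$ yields $i(X)=\frac{n+\delta}{2}$ at once; this is the first alternative of the statement. Thus I may assume $X$ is not prime Fano, so that it falls into one of the families $(i)$--$(iv)$.

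The extra input that separates the surviving cases is Proposition \ref{X is QEL and Sec(X) is hypersurface}$(i)$: $\text{Sec}(X)$ is a hypersurface, so $\dim\text{Sec}(X)=r-1$, i.e.\ $\text{Sec}(X)$ has codimension exactly $1$. I would run through the four families and compute or bound this codimension. For the projected Veronese of type $(ii)$, Remark \ref{describ VMRT of symplectic Grassmannian} gives $\dim\text{Sec}=2n$, so $\text{Sec}$ is either all of $\mathbb{P}^{r}$ or of codimension at least $2$, and this family is eliminated. For $\upsilon_{2}(\mathbb{P}^{n})$ of type $(i)$ one has $\dim\text{Sec}=2n$, hence $\text{codim}\,\text{Sec}=\frac{n(n-1)}{2}$, which equals $1$ only for $n=2$, giving case $(a)$. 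For the Segre $\mathbb{P}^{a}\times\mathbb{P}^{b}$ of type $(iii)$ the secant variety is the determinantal locus of matrices of rank $\leq 2$, of codimension $(a-1)(b-1)$; this is $1$ exactly when $a=b=2$ (and is $0$ when one factor is a $\mathbb{P}^{1}$), giving case $(b)$. Finally, for a hyperplane section $V=S\cap H$ of a Segre $S=\mathbb{P}^{a}\times\mathbb{P}^{b}$ with $a,b\geq 2$ of type $(iv)$, I would use the inclusion $\text{Sec}(V)\subseteq\text{Sec}(S)\cap H$ together with the nondegeneracy of $\text{Sec}(S)$ to obtain $\text{codim}\,\text{Sec}(V)\geq (a-1)(b-1)$; the hypersurface condition then forces $a=b=2$, giving case $(c)$.

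The main obstacle I anticipate is the bookkeeping for the two Segre families: identifying $\text{Sec}(\mathbb{P}^{a}\times\mathbb{P}^{b})$ with the rank-$\leq 2$ determinantal variety in order to read off the codimension $(a-1)(b-1)$, and then, for the hyperplane section, transferring this bound to $V$ through $\text{Sec}(V)\subseteq\text{Sec}(S)\cap H$ while checking that $\text{Sec}(S)\not\subseteq H$ (which holds because $S\subseteq\text{Sec}(S)$ is nondegenerate). A convenient feature is that for the full Veronese and Segre the secant dimension is classically known, pinning the parameters exactly, whereas for the hyperplane section only the lower bound on the codimension is available; since the hypersurface property is already supplied by Proposition \ref{X is QEL and Sec(X) is hypersurface}$(i)$, this lower bound is all that is needed, and one never has to verify the converse that the distinguished parameters produce a hypersurface. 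The families $(i)$ and $(ii)$ then reduce to one-line dimension counts. (I note in passing that the sharper bound $\delta<\frac{n}{2}$ of Proposition \ref{delta<n/2} would further exclude the Severi varieties $(a)$ and $(b)$, leaving only $(c)$ among the exceptional cases, but the statement as phrased follows already from the argument above.)
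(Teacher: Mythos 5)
Your proposal is correct and follows essentially the same route as the paper: reduce to the Ionescu--Russo classification of conic-connected manifolds via Proposition \ref{X is QEL and Sec(X) is hypersurface}$(ii)$, pin down the index in the prime Fano case by Proposition \ref{L^1(V) is QEL and Divisibility Theorem}$(iv)$, and eliminate cases $(i)$--$(iv)$ using the hypersurface condition on $\text{Sec}(X)$ together with Remark \ref{describ VMRT of symplectic Grassmannian} for family $(ii)$. Note that your ``$\text{codim}\,\text{Sec}(X)=1$'' criterion is literally equivalent to the paper's constraint $r=2n+2-\delta$, since $\delta=2n+1-\dim\text{Sec}(X)$; you simply carry out explicitly the secant-variety dimension counts that the paper leaves to the reader.
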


\begin{proof}
By Proposition \ref{X is QEL and Sec(X) is hypersurface}$(ii)$, $X$ is a nondegenerate linearly normal QEL-manifold. Thus, $X$ is conic-connected. By Proposition \ref{classification of conic-connected manifolds}, $X\subseteq\mathbb{P}^{r}$ is either a prime Fano manifold with index $i(X)=\frac{n+\delta}{2}$, or it is projectively equivalent to the cases $(i)-(iv)$ listed there. Now assume the latter case holds. By Proposition \ref{X is QEL and Sec(X) is hypersurface}$(i)$ $r=2n+2-\delta$. Hence, the only possible case in $(i)$ is $(a)$, the only possible case in $(iii)$ is $(b)$, and the only possible case in $(iv)$ is $(c)$. By Remark \ref{describ VMRT of symplectic Grassmannian}, there is no case in $(ii)$ satisfying $r=2n+2-\delta$.
\end{proof}


\begin{defi}
A QEL-manifold $V\subseteq\mathbb{P}^{r}$ is called a \textit{Special-QEL-manifold} (SQEL-manifold for short), if a general point $u\in\text{Sec}(V)\backslash V$ satisfies that for any point $u'\in C_{u}\backslash V$, we have $\Sigma_{u'}=\Sigma_{u}$.
\end{defi}

\begin{e.g.} \label{X is a SQEL-mfd e.g.}
By the proof of \cite[Prop. 2.3]{ES89} (see also \cite[Rmk. 2.4]{AS13b}), $X$ is a quadratic SQEL-manifold.
\end{e.g.}

The following Proposition will be proved in Section \ref{section: Proof of non-existence of certain 25-dim. quadratic QEL-manifolds}.

\begin{prop} \label{not exist 25-dim. quadric SQEL-mfd with delta=9}
There does not exist any nondegenerate $25$-dimensional quadratic SQEL-manifold $V\subseteq\mathbb{P}^{43}$ with secant defect $9$.
\end{prop}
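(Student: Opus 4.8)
The plan is to derive a contradiction by exploiting the rigid numerical constraints that the Divisibility Theorem and the VMRT-descent machinery impose on a hypothetical $25$-dimensional quadratic SQEL-manifold $V\subseteq\mathbb{P}^{43}$ with $\delta=9$. First I would record the basic invariants: since $\delta=9\geq 3$, Proposition \ref{L^1(V) is QEL and Divisibility Theorem}(i) tells us $V$ is a prime Fano manifold, and its VMRT $V^{(1)}\subseteq\mathbb{P}^{24}$ is again a QEL-manifold, of dimension $\frac{n+\delta}{2}-2=\frac{25+9}{2}-2=15$ with secant defect $\delta-2=7$, and moreover $\text{Sec}(V^{(1)})=\mathbb{P}^{24}$. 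I would also note that by Proposition \ref{L^1(V) is QEL and Divisibility Theorem}(ii), with $r_V=[\frac{\delta-1}{2}]=4$, we need $2^4=16$ to divide $n-\delta=25-9=16$, which is indeed satisfied; so the crude divisibility test alone does \emph{not} rule this case out. This is precisely why it was isolated as a separate proposition, and it signals that the genuine obstruction must come from a more refined analysis.

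The natural next step is to iterate the VMRT construction and track the secant-defect drop by two at each stage, reducing to a low-dimensional situation that can be classified outright. Applying Proposition \ref{L^1(V) is QEL and Divisibility Theorem}(i) once more to $V^{(1)}$ (dimension $15$, $\delta=7\geq 3$) yields $V^{(2)}$ of dimension $\frac{15+7}{2}-2=9$ with secant defect $5$; a further step gives $V^{(3)}$ of dimension $\frac{9+5}{2}-2=5$ with secant defect $3$; and one more gives a QEL-manifold of dimension $\frac{5+3}{2}-2=2$ with secant defect $1$. At the level $V^{(3)}$ we have a $5$-dimensional QEL-manifold with $\delta=3=\frac{n+1}{2}$, so $\delta>\frac{n}{2}$ and $\delta<n$, which puts it squarely in the scope of Proposition \ref{classification of QEL with delta>n/2}. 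That classification forces $V^{(3)}$ to be (a nonsingular hyperplane section of) one of a very short list, and I would read off the implied index, degree, and quadratic-intersection data, then lift these constraints back up the tower using the dimension/secant-defect bookkeeping together with the fact (Subsection \ref{subsection: quadratic manifolds}) that each $V^{(i)}$ is again a quadratic manifold.

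The decisive ingredient, and the reason this is stated as its own proposition rather than handled inline, is the SQEL hypothesis: I expect the contradiction to emerge not from dimension counting alone (which is consistent, as the divisibility check showed) but from the extra entry-locus rigidity. Concretely, the SQEL condition says that the entry locus $\Sigma_{u'}$ is constant as $u'$ ranges over $C_u\setminus V$, which forces the secant cones $C_u$ to have a very special linear-join structure over the $\delta$-dimensional quadrics $\Sigma_u$. I would combine this with the known projective geometry of the candidate $V^{(3)}$ from Proposition \ref{classification of QEL with delta>n/2} to show that the quadric-fibration structure that would have to exist on $V$ is incompatible with the actual VMRT of any variety on that list---for instance by comparing the predicted VMRT of the classified model against the $15$-dimensional $\delta=7$ manifold $V^{(1)}$ that our tower produces, and checking that no member reproduces the required degree or index.

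The step I expect to be the main obstacle is closing this final incompatibility: since the numerology is internally consistent, the argument cannot be purely arithmetic and must instead extract genuinely geometric information from the SQEL structure---most likely by analyzing the entry loci of $V$ directly (as the introduction hints, ``we need to explore some delicate structure of entry loci of QEL-manifolds'') and showing that the forced family of $9$-dimensional quadrics through general pairs of points cannot be organized consistently on a $25$-fold whose VMRT tower terminates in the classified $5$-dimensional model. I would therefore allocate the bulk of the work to an explicit study of $C_u$, $\Sigma_u$, and their interaction with the linear projection realizing $V^{(1)}$, aiming to exhibit a dimension or degree discrepancy that no variety in the relevant classification can satisfy.
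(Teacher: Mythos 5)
Your setup is correct and coincides with the paper's first step: $V^{(1)}\subseteq\mathbb{P}^{24}$ is a $15$-dimensional QEL-manifold with secant defect $7$ and $\text{Sec}(V^{(1)})=\mathbb{P}^{24}$, the divisibility test of Proposition \ref{L^1(V) is QEL and Divisibility Theorem}$(ii)$ is passed ($16\mid 16$), and your further descent $V^{(2)}$ (dim $9$, $\delta=5$, a hyperplane section of $S^{10}$ by Proposition \ref{classification of QEL with delta>n/2}) and $V^{(3)}$ (dim $5$, $\delta=3$, a hyperplane section of $\mathbb{G}(1,4)$) is numerically accurate. But this is where your proposal stops and the paper's proof begins, and the gap is the entire content of the proposition. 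There is no classificatory contradiction anywhere in the tower: every level is realized by an honest QEL-manifold, so ``lifting constraints back up'' has no mechanism to produce a contradiction --- there is no recognition theorem that pins down a QEL-manifold from its VMRT in this setting, and you name no invariant (degree, index, or otherwise) that actually fails. Your final paragraph explicitly defers the decisive step (``exhibit a dimension or degree discrepancy''), which is precisely the part that has to be proved.

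Two concrete ingredients are missing. First, to use the SQEL hypothesis after descending you need the nontrivial fact that the SQEL property passes to the VMRT; the paper devotes Lemma \ref{V^1 is a SQEL-mfd lem.} to this, identifying entry loci of $V^{(1)}$ with hyperplane sections of entry loci of $V$ via Lemma \ref{C_u cap V=Sigma_u, C_(u_1) cap C_(u_2)=Sigma_(u_1) cap Sigma_(u_2) lem.}. Second, and more seriously, the paper reduces to the standalone statement that no $15$-dimensional quadratic SQEL-manifold in $\mathbb{P}^{24}$ with $\text{Sec}=\mathbb{P}^{24}$ exists (Proposition \ref{not exist 15-dim. quadratic SQEL-mfd s.t. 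Sec=P^24}) --- note this case is covered by neither Proposition \ref{classification of QEL with delta>n/2} nor Proposition \ref{classification of QEL with delta=n/2}, since $\delta=7<15/2$ --- and proves it by projecting from the $8$-dimensional secant cone $C_{u}$, showing all fibers are linear spaces meeting $\Sigma_{u}$ in hyperplanes, and splitting into two cases according to the dimension of the locus $V_{u}$ swept out by lines meeting $\Sigma_{u}$. When $\dim(V_{u})=15$ the contradiction comes from entry-locus geometry: the projection forces two general entry loci through a point to meet in a $2$-plane, so their VMRTs at $v$ meet in a line, whereas distinct entry loci of a hyperplane section of $S^{10}$ (which is what $V^{(1)}$ must be) meet in linear spaces of dimension at least $2$ (Example \ref{entry loci on S^10 e.g.}). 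When $\dim(V_{u})=14$ the projection is birational and the contradiction is via the Ein--Shepherd-Barron results (Propositions \ref{ein-shepherd pic =Z+Z then E irreducible} and \ref{ein-shepherd blowing up}): a canonical-class computation forces $V'_{u}=\frac{5}{4}H_{V}-\frac{9}{4}E_{V}$, which is not Cartier. Neither of these arguments, nor any substitute for them, appears in your proposal; what you have is a correct reduction plus a research plan for the hard part.
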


Now we can rule out some cases with $\delta>\frac{n}{3}$.

\begin{prop} \label{case delta> n/3}
If $\delta>\frac{n}{3}$, then $(n, \delta)=(7, 3)$, $(10, 4)$, $(13, 5)$, $(14, 6)$, or $(15, 7)$.
\end{prop}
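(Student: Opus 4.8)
The plan is to combine the numerical constraints on $(n,\delta)$ with the Divisibility Theorem to cut down to a short finite list of candidate pairs, and then to eliminate the two surviving exceptional pairs by a VMRT argument and by the dedicated non-existence statement, respectively.

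First I would record the elementary numerical input. Since $\delta>\frac n3>0$, Proposition~\ref{classification of X when is not a prime Fano manifold} applies; its three exceptional cases have $(n,\delta)=(2,1),(4,2),(3,1)$, none of which satisfies $\frac n3<\delta<\frac n2$ (cases $(a),(b)$ have $\delta=\frac n2$, excluded by Proposition~\ref{delta<n/2}, and case $(c)$ has $\delta=\frac n3$). Hence $X$ is a prime Fano manifold, and by Proposition~\ref{L^1(V) is QEL and Divisibility Theorem}$(iv)$ its index $i(X)=\frac{n+\delta}{2}$ is a positive integer, so $n\equiv\delta\pmod 2$. The hypothesis, combined with the upper bound $\delta<\frac n2$ of Proposition~\ref{delta<n/2}, reads $2\delta<n<3\delta$, i.e. $\delta<n-\delta<2\delta$.

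Next I would invoke the Divisibility Theorem. For $\delta\geq 3$, Proposition~\ref{L^1(V) is QEL and Divisibility Theorem}$(ii)$ gives $q:=2^{[\frac{\delta-1}{2}]}\mid(n-\delta)$, so the positive integer $n-\delta$ is a multiple of $q$ lying in the open interval $(\delta,2\delta)$. A direct check shows $q>2\delta$ once $\delta\geq 11$, whence no such multiple exists and $\delta\leq 10$; the cases $\delta=1,2$ are disposed of by parity (for $\delta=1$ the interval $(2\delta,3\delta)$ contains no integer, and for $\delta=2$ the sole candidate $n=5$ violates $n\equiv\delta$). Running through $\delta=3,\dots,10$, for each $\delta$ the divisibility forces $n-\delta$ to be the unique multiple of $q$ in $(\delta,2\delta)$ of the correct parity, and one finds exactly
\begin{equation*}
(n,\delta)\in\{(7,3),\ (10,4),\ (13,5),\ (14,6),\ (15,7),\ (25,9),\ (26,10)\},
\end{equation*}
with $\delta=8$ giving no solution.

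It then remains to eliminate $(26,10)$ and $(25,9)$. For $(26,10)$ I would pass to the variety of minimal rational tangents: by Proposition~\ref{L^1(V) is QEL and Divisibility Theorem}$(i)$, $X^{(1)}\subseteq\mathbb{P}^{n-1}=\mathbb{P}^{25}$ is a QEL-manifold of dimension $\frac{n+\delta}{2}-2=16$ with secant defect $\delta-2=8$ and $\text{Sec}(X^{(1)})=\mathbb{P}^{25}$; in particular $X^{(1)}$ is nondegenerate in $\mathbb{P}^{25}$ and its secant defect equals $\frac{\dim X^{(1)}}{2}$. By Proposition~\ref{classification of QEL with delta=n/2} the only $16$-dimensional such manifold is the $E_6$-variety $\mathbb{OP}^2\subseteq\mathbb{P}^{26}$, which is nondegenerate in $\mathbb{P}^{26}$, contradicting $X^{(1)}\subseteq\mathbb{P}^{25}$. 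For $(25,9)$ one computes $r=2n+2-\delta=43$, and by Example~\ref{X is a SQEL-mfd e.g.} the base locus $X\subseteq\mathbb{P}^{43}$ is a $25$-dimensional quadratic SQEL-manifold of secant defect $9$; no such manifold exists by Proposition~\ref{not exist 25-dim. quadric SQEL-mfd with delta=9}. This leaves precisely the five asserted pairs. The genuinely hard step is the exclusion of $(25,9)$: unlike $(26,10)$, it is fully compatible with the VMRT recursion of Proposition~\ref{L^1(V) is QEL and Divisibility Theorem} and survives every standard numerical test, so it can only be ruled out by the separate non-existence result Proposition~\ref{not exist 25-dim. quadric SQEL-mfd with delta=9}, whose proof is deferred to the final section.
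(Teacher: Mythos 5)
Your proposal is correct and follows essentially the same route as the paper: the Divisibility Theorem plus the bounds $2\delta<n<3\delta$ produce the finite candidate list, the pair $(5,2)$ is killed by the parity of the index $\frac{n+\delta}{2}$ via Proposition \ref{classification of X when is not a prime Fano manifold} (you just do this upfront rather than as a separate case), $(26,10)$ is killed by applying Proposition \ref{classification of QEL with delta=n/2} to $X^{(1)}\subseteq\mathbb{P}^{25}$, and $(25,9)$ by Example \ref{X is a SQEL-mfd e.g.} together with Proposition \ref{not exist 25-dim. quadric SQEL-mfd with delta=9}. The only difference is that you verify the numerical list directly where the paper also cites \cite[Prop.~3.6]{IR08}; the substance is identical.
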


\begin{proof}
By Proposition \ref{delta<n/2}, $\frac{n}{3}<\delta<\frac{n}{2}$. Then by Proposition \ref{L^1(V) is QEL and Divisibility Theorem} $(ii)$, we get $\delta\leq 10$ and the following list for the possible values of the pair $(n, \delta)$: $(5, 2)$, $(7, 3)$, $(10, 4)$, $(13, 5)$, $(14, 6)$, $(15, 7)$, $(25, 9)$ and $(26, 10)$. One can also find this list in \cite[Prop. 3.6]{IR08}. Now it suffices to exclude the three cases not appearing in the conclusion.

Case 1. $(n, \delta, r)=(5, 2, 10)$: It is excluded by Proposition \ref{classification of X when is not a prime Fano manifold}, since $n+\delta$ is odd.

Case 2. $(n, \delta, r)=(25, 9, 43)$: By Example \ref{X is a SQEL-mfd e.g.}, $X$ is a quadratic SQEL-manifold. Then the existence of such $X$ contradicts Proposition \ref{not exist 25-dim. quadric SQEL-mfd with delta=9}.

Case 3. $(n, \delta, r)=(26, 10, 44)$. By Proposition \ref{L^1(V) is QEL and Divisibility Theorem}$(i)$, $X^{(1)}\subseteq\mathbb{P}^{25}$ is a QEL-manifold of dimension $16$ with secant defect $8$ and $\text{Sec}(X^{(1)})=\mathbb{P}^{25}$. However, such a QEL-manifold as $X^{(1)}$ does not exist by Proposition \ref{classification of QEL with delta=n/2}.
\end{proof}

\begin{rmk}
Let $\Phi: \mathbb{P}^{r+1}\dashrightarrow\mathbb{P}^{r+1}$ be a quadro-quadric special birational transformation. By Proposition \ref{Z=P^r if and only if X is Severi}, if $X\subseteq\mathbb{P}^{r}$ is the section of the base locus scheme of $\Phi$ by a general hyperplane $H\subseteq\mathbb{P}^{r+1}$, then $\phi=\Phi|_{H}: H\dashrightarrow \Phi(H)$ is a birational map defined by quadric hypersurfaces. Moreover, $\Phi(H)$ is a smooth quadric hypersurface (see for example \cite[Example 5.1]{Sta12}). This shows the possibility of $(n, \delta)=(7, 3)$ and $(15, 7)$ in Proposition \ref{case delta> n/3}. For the rest three cases there, we believe they are impossible. When $Z\subseteq\mathbb{P}^{r+1}$ is a nondegenerate smooth hypersurface, we can rule out them (see Proposition \ref{case d<=5 and case delta>n/3} in the following).
\end{rmk}

\section{\normalsize Quadro-quadric special birational transformations to smooth complete intersections}  \label{section: proof of main theorems}

\emph{Unless otherwise stated, we assume throughout this section that $b=2$, and $Z\subsetneq\mathbb{P}^{r+c}$ is a nondegenerate smooth complete intersection of type $(d_{1}, \ldots, d_{c})$, where $d_{1}\geq \cdots \geq d_{c}\geq 2$.}

\subsection{\normalsize Basic formulae}

\begin{prop} \label{delta>0 if Z complete intersection and n>1}
Assume that $n\geq 2$. Then $\delta>0$ and $\text{Pic}(Z^{(1)})=\mathbb{Z}(\mathcal{O}_{Z^{(1)}}(1))$.
\end{prop}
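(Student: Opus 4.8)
The plan is to establish the two conclusions in the order they logically depend on one another: first identify $Z^{(1)}$ as a smooth complete intersection of dimension at least $3$, deduce its Picard group from a Lefschetz-type theorem, and only then feed that Picard computation into a contradiction argument that forces $\delta>0$.

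First I would record the relevant dimensions. Since $b=2$, Proposition \ref{L_z smooth and (when b=2) covered by lines}$(ii)$ gives $\dim(Z^{(1)})=n+\delta+1$, so the hypothesis $n\geq 2$ together with $\delta\geq 0$ yields $\dim(Z^{(1)})\geq 3$ (this is exactly where $n\geq 2$ is used). As $Z$ is a prime Fano manifold it is covered by lines, so Proposition \ref{VMRT of complete intersection is also complete intersection} applies and exhibits $Z^{(1)}\subseteq\mathbb{P}^{r-1}$ as a smooth nondegenerate complete intersection. For a smooth complete intersection of dimension $\geq 3$ the Grothendieck--Lefschetz theorem shows that the restriction map $\mathrm{Pic}(\mathbb{P}^{r-1})\to\mathrm{Pic}(Z^{(1)})$ is an isomorphism, whence $\mathrm{Pic}(Z^{(1)})=\mathbb{Z}(\mathcal{O}_{Z^{(1)}}(1))$. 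This settles the second assertion.

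For the first assertion I would argue by contradiction, assuming $\delta=0$. Then $r=2n+2$ by Proposition \ref{X is QEL and Sec(X) is hypersurface}$(i)$ and $\dim(Z^{(1)})=n+1$. By Proposition \ref{X linearly embedded into L^1(Z)} the projection $\pi_p$ restricts to an isomorphism from $X$ onto a closed subvariety $X'=\pi_p(X)\subseteq Z^{(1)}$, and since $\pi_p$ is a linear projection from a point off $X$ one has $(\pi_p|_X)^{*}\mathcal{O}_{X'}(1)\cong\mathcal{O}_X(1)$. In particular $X'$ is an irreducible divisor in $Z^{(1)}$ (codimension one, as $\dim X'=n=\dim Z^{(1)}-1$) and $X'$ spans $\mathbb{P}^{r-1}$ because $X$ spans $\mathbb{P}^r$. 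Using the Picard computation just established, the prime divisor $X'$ lies in $|\mathcal{O}_{Z^{(1)}}(k)|$ for some $k\geq 1$; nondegeneracy of $X'$ excludes $k=1$ (a hyperplane section is degenerate), so $k\geq 2$, and since complete intersections are projectively normal every member of $|\mathcal{O}_{Z^{(1)}}(k)|$ is cut out by a hypersurface of $\mathbb{P}^{r-1}$. Thus $X'=Z^{(1)}\cap Q$ for a degree-$k$ hypersurface $Q$ meeting $Z^{(1)}$ in the expected codimension, so $X'$ is itself a smooth complete intersection of positive codimension in $\mathbb{P}^{r-1}$, hence projectively normal, hence linearly normal.

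The contradiction is then cohomological bookkeeping: linear normality and nondegeneracy of $X'\subseteq\mathbb{P}^{r-1}$ give $h^0(X',\mathcal{O}_{X'}(1))=r$, whereas $\mathcal{O}_{X'}(1)\cong\mathcal{O}_X(1)$ under $\pi_p|_X$ together with the linear normality of $X\subseteq\mathbb{P}^r$ (Proposition \ref{X is QEL and Sec(X) is hypersurface}$(ii)$) gives $h^0(X,\mathcal{O}_X(1))=r+1$. Since $r\neq r+1$ this is absurd, so $\delta\neq 0$ and therefore $\delta>0$. I expect the delicate point to be the clean transfer of linear normality across the projection $\pi_p$ --- one must use that $\pi_p|_X$ is a genuine isomorphism and that it preserves the polarization (both provided by Proposition \ref{X linearly embedded into L^1(Z)}) --- rather than the Picard computation, which is a direct appeal to Grothendieck--Lefschetz once $\dim(Z^{(1)})\geq 3$ is in hand.
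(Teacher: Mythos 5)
Your proposal is correct and takes essentially the same route as the paper: both identify $Z^{(1)}$ as a smooth complete intersection of dimension $n+\delta+1\geq 3$ (via Propositions \ref{L_z smooth and (when b=2) covered by lines} and \ref{VMRT of complete intersection is also complete intersection}), get $\text{Pic}(Z^{(1)})=\mathbb{Z}(\mathcal{O}_{Z^{(1)}}(1))$ from Lefschetz, and then, assuming $\delta=0$, realize $X'\cong X$ as a divisor in $Z^{(1)}$ cut out (by projective normality of $Z^{(1)}$) by a hypersurface of $\mathbb{P}^{r-1}$, so that $X'$ is a linearly normal smooth complete intersection, contradicting the nondegeneracy of $X\subseteq\mathbb{P}^{r}$. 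The only cosmetic differences are your separate (unneeded) exclusion of $k=1$ and your phrasing of the final contradiction as the count $h^{0}(X',\mathcal{O}_{X'}(1))=r$ versus $h^{0}(X,\mathcal{O}_{X}(1))=r+1$, where the paper instead states non-surjectivity of the restriction map $H^{0}(\mathbb{P}^{r-1},\mathcal{O}_{\mathbb{P}^{r-1}}(1))\rightarrow H^{0}(X',\mathcal{O}_{X'}(1))$.
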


\begin{proof}
By Proposition \ref{L_z smooth and (when b=2) covered by lines}$(i)(ii)$, $Z^{(1)}$ is an irreducible smooth projective variety of dimension $n+\delta+1\geq 3$. By Proposition \ref{VMRT of complete intersection is also complete intersection}, $Z^{(1)}\subseteq\mathbb{P}^{r-1}$ is a complete intersection. Then $H^{0}(\mathbb{P}^{r-1}, \mathcal{O}_{\mathbb{P}^{r-1}}(k))\rightarrow H^{0}(Z^{(1)}, \mathcal{O}_{Z^{(1)}}(k))$ is surjective for any integer $k$. By the Lefchetz Theorem on complete intersections, $\text{Pic}(Z^{(1)})=\mathbb{Z}(\mathcal{O}_{Z^{(1)}}(1))$.

Now we assume that $\delta=0$. By Proposition \ref{X linearly embedded into L^1(Z)}, there is a linear projection $\pi:\mathbb{P}^{r}\dashrightarrow\mathbb{P}^{r-1}$ such that $X$ is isomorphically mapped to a closed subvariety $X'$ of $Z^{(1)}$. By Proposition \ref{X is QEL and Sec(X) is hypersurface}$(i)$, $X$ is nondegenerate in $\mathbb{P}^{r}$. Hence, $H^{0}(\mathbb{P}^{r-1}, \mathcal{O}_{\mathbb{P}^{r-1}}(1))\rightarrow H^{0}(X', \mathcal{O}_{X'}(1))$ is not surjective.

Since $\dim(Z^{(1)})=\dim(X)+\delta+1=\dim(X')+1$ and $\text{Pic}(Z^{(1)})=\mathbb{Z}(\mathcal{O}_{Z^{(1)}}(1))$, we know that $\mathcal{O}_{Z^{(1)}}(X')=\mathcal{O}_{Z^{(1)}}(k_{0})$ for some positive integer $k_{0}$. The surjection of the map $H^{0}(\mathbb{P}^{r-1}, \mathcal{O}_{\mathbb{P}^{r-1}}(k_{0}))\rightarrow H^{0}(Z^{(1)}, \mathcal{O}_{Z^{(1)}}(k_{0}))$ implies that $X'$ is the scheme-theoretic intersection of $Z^{(1)}$ and a hypersurface of degree $k_{0}$ in $\mathbb{P}^{r-1}$. Hence, $X'$ is a smooth complete intersection in $\mathbb{P}^{r-1}$. This implies that $H^{0}(\mathbb{P}^{r-1}, \mathcal{O}_{\mathbb{P}^{r-1}}(1))\rightarrow H^{0}(X', \mathcal{O}_{X'}(1))$ is surjective, which is a contradiction.
\end{proof}

\begin{prop} \label{delta=(n+c-sum d_i)/2 and sum (d_i^2)<=3n+c}
We have $\sum\limits_{i=1}^{c}d_{i}=n+c-2\delta$ and $\sum\limits_{i=1}^{c}d_{i}^{2}\leq 3n+c$.
In particular, $\delta\geq\frac{n+c-\sqrt{c(3n+c)}}{2}$.
\end{prop}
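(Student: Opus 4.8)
The plan is to exploit that, since $Z$ is a prime Fano manifold (hence covered by lines) which is also a complete intersection, its VMRT $Z^{(1)}$ is again a complete intersection whose defining degrees are completely explicit, and then to compare two expressions for the dimension and one lower bound for the Fano index of $Z^{(1)}$.

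First I would use Proposition \ref{VMRT of complete intersection is also complete intersection}: because $b=2$ and $Z$ is covered by lines, $Z^{(1)}\subseteq\mathbb{P}^{r-1}$ is a smooth complete intersection whose defining degrees are exactly $2,3,\ldots,d_{i}$ for each $i$, so that its codimension in $\mathbb{P}^{r-1}$ equals $\sum_{i=1}^{c}(d_{i}-1)$. Hence $\dim(Z^{(1)})=(r-1)-\sum_{i=1}^{c}(d_{i}-1)$. On the other hand Proposition \ref{L_z smooth and (when b=2) covered by lines}$(ii)$ with $b=2$ gives $\dim(Z^{(1)})=n+\delta+1$. Equating the two values and substituting $r=2n+2-\delta$ from Proposition \ref{X is QEL and Sec(X) is hypersurface}$(i)$, a one-line rearrangement produces the first identity $\sum_{i=1}^{c}d_{i}=n+c-2\delta$.

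For the inequality I would compute the Fano index of $Z^{(1)}$ by adjunction for complete intersections, namely
$$i(Z^{(1)})=(r-1)+1-\sum_{i=1}^{c}(2+3+\cdots+d_{i})=r+c-\sum_{i=1}^{c}\frac{d_{i}(d_{i}+1)}{2}.$$
Since $Z^{(1)}$ is covered by lines by Proposition \ref{L_z smooth and (when b=2) covered by lines}$(iii)$, a general such line $\ell$ is free, so $0\le\deg N_{\ell/Z^{(1)}}$ and therefore $-K_{Z^{(1)}}\cdot\ell=\deg N_{\ell/Z^{(1)}}+2\ge 2$, i.e. $i(Z^{(1)})\ge 2$. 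Substituting $r=2n+2-\delta$ into this inequality, expanding $\sum_{i}d_{i}(d_{i}+1)=\sum_{i}d_{i}^{2}+\sum_{i}d_{i}$, and eliminating the linear term via the identity $\sum_{i}d_{i}=n+c-2\delta$ already obtained, the bound $i(Z^{(1)})\ge 2$ collapses directly to $\sum_{i=1}^{c}d_{i}^{2}\le 3n+c$.

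The final ``in particular'' clause is then purely formal: Cauchy--Schwarz gives $\bigl(\sum_{i}d_{i}\bigr)^{2}\le c\sum_{i}d_{i}^{2}$, whence $(n+c-2\delta)^{2}\le c(3n+c)$; as $\sum_{i}d_{i}=n+c-2\delta\ge 2c>0$ the base on the left is nonnegative, so taking square roots and solving for $\delta$ yields $\delta\ge\frac{n+c-\sqrt{c(3n+c)}}{2}$. The only genuinely nonformal input is the index estimate $i(Z^{(1)})\ge 2$, which rests entirely on $Z^{(1)}$ being covered by lines; everything else is bookkeeping of the degree sequence $(2,\ldots,d_{1},\ldots,2,\ldots,d_{c})$ and elementary algebra. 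The main points to verify carefully are the adjunction count for this iterated-degree complete intersection and the fact that no degree $1$ occurs, the latter being guaranteed by the standing assumption $d_{c}\ge 2$.
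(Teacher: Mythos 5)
Your proposal is correct and follows essentially the same route as the paper: equate the two formulas for $\dim(Z^{(1)})$ (from Proposition \ref{L_z smooth and (when b=2) covered by lines}$(ii)$ and Proposition \ref{VMRT of complete intersection is also complete intersection}), substitute $r=2n+2-\delta$ to get $\sum_{i}d_{i}=n+c-2\delta$, convert the bound $i(Z^{(1)})\geq 2$ into $\sum_{i}d_{i}^{2}\leq 3n+c$ via the adjunction formula $i(Z^{(1)})=r+c-\sum_{i}\frac{d_{i}(d_{i}+1)}{2}$, and finish with Cauchy--Schwarz. The one genuine difference is how you justify $i(Z^{(1)})\geq 2$: the paper first shows $Z^{(1)}$ is a \emph{prime} Fano manifold, using $\text{Pic}(Z^{(1)})=\mathbb{Z}(\mathcal{O}_{Z^{(1)}}(1))$ from Proposition \ref{delta>0 if Z complete intersection and n>1}; since that proposition requires $n\geq 2$, the paper must dispose of the case $n=1$ by a separate direct computation (there $\delta=0$, $c=1$, $d_{1}=2$). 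You instead note that for a smooth complete intersection, adjunction already gives $-K_{Z^{(1)}}=\mathcal{O}_{Z^{(1)}}(i(Z^{(1)}))$ with no Picard-group input, so freeness of a general line in the covering family yields $-K_{Z^{(1)}}\cdot\ell=\deg N_{\ell/Z^{(1)}}+2\geq 2$ uniformly in $n$ and $\delta$. This removes both the case split and the appeal to the prime-Fano/Picard statement, a slight but legitimate streamlining; the paper's extra step costs nothing globally since it needs Proposition \ref{delta>0 if Z complete intersection and n>1} elsewhere anyway. Both arguments are sound.
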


\begin{proof}
By Proposition \ref{L_z smooth and (when b=2) covered by lines}$(ii)$, $\dim(Z^{(1)})=n+\delta+1$. On the other hand, by Proposition \ref{VMRT of complete intersection is also complete intersection}, $\dim(Z^{(1)})=r-1+c-\sum\limits_{i=1}^{c}d_{i}$. Thus, $r=\sum\limits_{i=1}^{c}d_{i}+n+\delta+2-c$. By Proposition \ref{X is QEL and Sec(X) is hypersurface}$(i)$, $r=2n+2-\delta$. Hence, $\sum\limits_{i=1}^{c}d_{i}=n+c-2\delta$.

If $n=1$, then by the formula $\sum\limits_{i=1}^{c}d_{i}=n+c-2\delta$, we get that $\delta=0$, $c=1$ and $d_{1}=2$. In particular, $\sum\limits_{i=1}^{c}d_{i}^{2}\leq 3n+c$ holds when $n=1$. Now we assume that $n\geq 2$. Then by Proposition \ref{delta>0 if Z complete intersection and n>1}, $\delta>0$ and $\text{Pic}(Z^{(1)})=\mathbb{Z}(\mathcal{O}_{Z^{(1)}}(1))$. Then $Z^{(1)}$ is a prime Fano manifold with index $i(Z^{(1)})\geq 2$ by Proposition \ref{L_z smooth and (when b=2) covered by lines}$(iii)$. By Proposition \ref{VMRT of complete intersection is also complete intersection}, the index $i(Z^{(1)})=r+c-\sum\limits_{i=1}^{c}\frac{d_{i}(d_{i}+1)}{2}$. Combining with $i(Z^{(1)})\geq 2$ and $\sum\limits_{i=1}^{c}d_{i}=n+c-2\delta$, we get $\sum\limits_{i=1}^{c}d_{i}^{2}\leq 3n+c$. Remark that $(\sum\limits_{i=1}^{c}d_{i})^{2}\leq c\sum\limits_{i=1}^{c}d_{i}^{2}$. Then we have $\delta=\frac{n+c-\sum\limits_{i=1}^{c}d_{i}}{2}\geq\frac{n+c-\sqrt{c(3n+c)}}{2}$.
\end{proof}

\begin{cor} \label{e_1=n-2delta-c, e_2<=... etc.}
Let $e_{1}=\sum\limits_{i=1}^{c}(d_{i}-2)$ and $e_{2}=\sum\limits_{i=1}^{c}(d_{i}-2)^{2}$. Then $e_{1}=n-2\delta-c$ and $e_{2}\leq 8\delta+c-n$. In particular, $c\leq n-2\delta$ and $\delta\geq\frac{n-c}{8}$.
\end{cor}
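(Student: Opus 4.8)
The plan is to obtain everything by elementary algebraic substitution, using as the only inputs the two relations already furnished by Proposition \ref{delta=(n+c-sum d_i)/2 and sum (d_i^2)<=3n+c}: the identity $\sum_{i=1}^{c} d_i = n + c - 2\delta$ and the inequality $\sum_{i=1}^{c} d_i^2 \leq 3n + c$. Since $e_1$ and $e_2$ are defined as the first two shifted power sums $\sum (d_i - 2)$ and $\sum (d_i-2)^2$, the strategy is simply to expand these shifts in terms of $\sum d_i$ and $\sum d_i^2$ and then plug in.

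For the first identity I would write $e_1 = \sum_{i=1}^c (d_i - 2) = \left(\sum_{i=1}^c d_i\right) - 2c$ and substitute the identity for $\sum d_i$, which gives $e_1 = (n+c-2\delta) - 2c = n - 2\delta - c$, exactly as claimed. For the second I expand $(d_i-2)^2 = d_i^2 - 4 d_i + 4$ and sum to obtain $e_2 = \left(\sum d_i^2\right) - 4\left(\sum d_i\right) + 4c$; substituting both the identity and the inequality then yields $e_2 = \left(\sum d_i^2\right) - 4(n+c-2\delta) + 4c = \left(\sum d_i^2\right) - 4n + 8\delta \leq (3n+c) - 4n + 8\delta = 8\delta + c - n$.

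Finally, the two ``in particular'' statements follow from the sign constraints on the individual terms. Because each $d_i \geq 2$ by the standing assumption on the type $(d_1, \ldots, d_c)$, every summand $d_i - 2$ is nonnegative, so $e_1 \geq 0$; combined with $e_1 = n - 2\delta - c$ this gives $c \leq n - 2\delta$. Likewise each $(d_i - 2)^2 \geq 0$ forces $e_2 \geq 0$, and together with the bound $e_2 \leq 8\delta + c - n$ this yields $8\delta + c - n \geq 0$, that is, $\delta \geq \frac{n-c}{8}$.

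I do not expect any genuine obstacle here: the entire mathematical content is already packaged in the preceding proposition, and this corollary merely repackages those two relations into the ``defect-from-degree-two'' quantities $e_1$ and $e_2$, which are the natural measures of how far the type of $Z$ departs from a complete intersection of quadrics. The only point requiring a little care is the bookkeeping of constants in the expansion of $e_2$, together with the observation that it is precisely the hypothesis $d_i \geq 2$ (rather than merely $d_i \geq 1$) that makes $e_1$ and $e_2$ nonnegative and hence delivers the two clean inequalities.
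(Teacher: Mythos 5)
Your proof is correct and is essentially the paper's own argument: the paper likewise derives both relations by direct substitution from Proposition \ref{delta=(n+c-sum d_i)/2 and sum (d_i^2)<=3n+c} and uses $d_i\geq 2$ to get $e_1\geq 0$, $e_2\geq 0$ for the two "in particular" inequalities (the paper merely leaves the algebra you wrote out implicit). The only microscopic slip is in your closing remark: $e_2\geq 0$ holds because it is a sum of squares, irrespective of $d_i\geq 2$; that hypothesis is only needed for $e_1\geq 0$.
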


\begin{proof}
Remark that all $d_{i}\geq 2$. Then $e_{1}\geq 0$ and $e_{2}\geq 0$. So the conclusion is a direct consequence of Proposition \ref{delta=(n+c-sum d_i)/2 and sum (d_i^2)<=3n+c}.
\end{proof}

There is a classification of the cases where $c=1$ and $d_{1}=2$ due to Staglian\`{o} as follows:

\begin{prop} (\cite[Thm. 6.1]{Sta12}) \label{Z hyperquadric then Y Severi and X hyperplane section of Severi}
Assume that $c=1$ and $d_{1}=2$. Then $Y$ is a Severi variety and $X$ is a nonsingular hyperplane section of a Severi variety.
\end{prop}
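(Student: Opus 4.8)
The plan is to extract the numerical invariants, observe that $Y$ satisfies the definition of a Severi variety except possibly for smoothness, and then recover both the smoothness of $Y$ and the structure of $X$ by lifting $\phi$ to a quadro-quadric Cremona transformation of $\mathbb{P}^{r+1}$. Since $c=1$ and $d_{1}=2$, Proposition \ref{delta=(n+c-sum d_i)/2 and sum (d_i^2)<=3n+c} gives $n=2\delta+1$; then $m=2\delta+2$ by Proposition \ref{X is QEL and Sec(X) is hypersurface}$(iv)$ and $r=3\delta+4$ by Proposition \ref{X is QEL and Sec(X) is hypersurface}$(i)$, so that $N=r+1=3\delta+5$ and $\dim(Y)=m=\frac{2}{3}(N-2)$. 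Thus $Y$ sits in the correct ambient space and has exactly the extremal dimension for a Severi variety.

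Next I would check that $\text{Sec}(Y)\neq\mathbb{P}^{N}$. A smooth quadric hypersurface is the scheme-theoretic intersection of the quadrics containing it, hence a quadratic manifold, so Corollary \ref{D is the intersection of Z and Sec(Y)} applies and yields $\text{Sec}(Y)\cap Z=D_{\mathbb{P}}$. By Proposition \ref{D is divisor degree 2b-1 and Y<=D<=Sec(Y)}, $D_{\mathbb{P}}$ is a divisor in $Z$, hence a proper subset, so $Z\not\subseteq\text{Sec}(Y)$; were $\text{Sec}(Y)=\mathbb{P}^{N}$ this would be impossible. Since $Y$ is also nondegenerate by Proposition \ref{Y is nondegenerate}, the definition of a Severi variety is met the instant $Y$ is known to be smooth---note that only the definition is used here, not Zak's classification.

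The two remaining points, namely the smoothness of $Y$ (which is not hypothesized) and the precise shape of $X$, I would settle simultaneously by constructing the converse of the Remark following Proposition \ref{case delta> n/3}: a quadro-quadric special birational transformation $\Phi:\mathbb{P}^{r+1}\dashrightarrow\mathbb{P}^{r+1}$ with $\Phi|_{H}=\phi$ for a hyperplane $H\cong\mathbb{P}^{r}$ and $\Phi(H)=Z$. Concretely, one extends the $r+2$ quadrics on $H$ defining $\phi$ to quadrics $\widetilde{Q}_{0},\ldots,\widetilde{Q}_{r+1}$ on $\mathbb{P}^{r+1}$---the restriction map on quadratic forms being surjective---in such a way that, writing $F$ for the form defining $Z$, the quartic $F(\widetilde{Q}_{0},\ldots,\widetilde{Q}_{r+1})$ is divisible by the linear equation of $H$ (this places $\Phi(H)$ on $Z$) and that $\Phi$ is birational with smooth connected base locus. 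Granting such a $\Phi$, Proposition \ref{Z=P^r if and only if X is Severi} shows that $\Phi^{-1}$ is again special of type $(2,2)$ and that the base loci $V$ and $V'$ of $\Phi$ and $\Phi^{-1}$ are both Severi varieties. Then $X=V\cap H$ is a nonsingular hyperplane section of the Severi variety $V$, while $\phi^{-1}=\Phi^{-1}|_{Z}$ has base locus $V'\cap Z$; since $\dim(V')=\frac{2}{3}(r-1)=2\delta+2=\dim(Y)$ and $Z$ is a hypersurface, $V'\cap Z$ cannot drop dimension, forcing $V'\subseteq Z$ and hence $Y=V'$, a smooth Severi variety.

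I expect the main obstacle to be the construction of $\Phi$ together with the verification that it is special and quadro-quadric. The extension of the defining quadrics is highly non-unique, and the smoothness and connectedness of the base locus of $\Phi$ do not follow formally from the smoothness of its hyperplane section $X=V\cap H$; this is precisely where the real work lies. I would attack it either explicitly, determining the $\widetilde{Q}_{i}$ from the form $F$ and the quadrics defining $\phi^{-1}$, or intrinsically, by matching the linear system of quadrics through $Y$ in $\mathbb{P}^{r+1}$ with the system on $Z$ that defines $\phi^{-1}$. Once $\Phi$ is in hand, both assertions of the proposition follow as indicated.
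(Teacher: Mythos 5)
This proposition is not proved in the paper at all: it is quoted verbatim from Staglian\`{o} (\cite[Thm. 6.1]{Sta12}), so there is no internal argument to compare yours against, and your proposal must stand on its own. Its first half does stand: with $c=1$, $d_{1}=2$, Proposition \ref{delta=(n+c-sum d_i)/2 and sum (d_i^2)<=3n+c} gives $n=2\delta+1$, hence $m=2\delta+2$, $r=3\delta+4$, $N=3\delta+5$, so $m=\frac{2}{3}(N-2)$; and since a smooth quadric hypersurface is a quadratic manifold, Corollary \ref{D is the intersection of Z and Sec(Y)} together with Proposition \ref{D is divisor degree 2b-1 and Y<=D<=Sec(Y)} gives $\text{Sec}(Y)\cap Z=D_{\mathbb{P}}\subsetneq Z$, whence $\text{Sec}(Y)\neq\mathbb{P}^{N}$, while $Y$ is nondegenerate by Proposition \ref{Y is nondegenerate}. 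This is exactly the argument pattern the paper itself uses in proving Theorem \ref{introduction: thm. c.i. and Y smooth implies c=1 and deg(Z)=2}; but there smoothness of $Y$ is a standing hypothesis, and in the present proposition it is not.

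Everything therefore hinges on the smoothness of the scheme $Y$, and your route to it --- lifting $\phi$ to a quadro-quadric special Cremona transformation $\Phi$ of $\mathbb{P}^{r+1}$ with $\Phi|_{H}=\phi$ and $\Phi(H)=Z$ --- is never carried out; you defer it explicitly. This is not a removable technicality: the existence of such a lift is essentially equivalent to the statement being proved. Granting $\Phi$ special of type $(2,2)$, Proposition \ref{Z=P^r if and only if X is Severi} makes its base locus $V$ a Severi variety with $X=V\cap H$, so constructing $\Phi$ requires in particular exhibiting a Severi variety having the given $X$ as a nonsingular hyperplane section --- which is precisely the conclusion about $X$. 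None of the constraints you impose on the extension (divisibility of $F(\widetilde{Q}_{0},\ldots,\widetilde{Q}_{r+1})$ by the equation of $H$, birationality of $\Phi$, smoothness and connectedness of its base scheme) is shown to be satisfiable, and a generic extension of the quadrics will satisfy none of the last three; your fallback of ``matching the linear system of quadrics through $Y$ in $\mathbb{P}^{r+1}$'' presupposes control of $Y$, which is what is missing. The downstream logic (that $X=V\cap H$ and that $\dim(V')=2\delta+2=\dim(Y)$ forces $V'\subseteq Z$ and $Y=V'$) is fine, but it consumes a $\Phi$ you have not produced. So the proposal is a reasonable strategy sketch --- it is the converse of the Remark following Proposition \ref{case delta> n/3}, and a posteriori such lifts do exist in all four cases --- but as a proof it has a genuine gap at its central step; the statement should instead be credited to, and relied upon from, \cite[Thm. 6.1]{Sta12}.
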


As a consequence, we get the following

\begin{cor}
Assume $\delta=0$. Then $Z\subseteq\mathbb{P}^{5}$ is a $4$-dimensional nonsingular quadric hypersurface, $Y\subseteq\mathbb{P}^{5}$ is projectively equivalent to $\nu_{2}(\mathbb{P}^{2})\subseteq\mathbb{P}^{5}$ and $X\subseteq\mathbb{P}^{4}$ is projectively equivalent to a nonsingular hyperplane section of $\nu_{2}(\mathbb{P}^{2})\subseteq\mathbb{P}^{5}$.
\end{cor}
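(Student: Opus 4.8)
The plan is to extract all the numerical invariants from the single hypothesis $\delta = 0$, and then invoke Staglian\`{o}'s classification together with Zak's classification of Severi varieties to identify $Z$, $Y$ and $X$ explicitly. The whole argument is bookkeeping built on results already established in this section, so I expect it to be short.

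First I would determine $n$. Since $\delta = 0$, the contrapositive of Proposition~\ref{delta>0 if Z complete intersection and n>1} forces $n \leq 1$. On the other hand, by Proposition~\ref{X is QEL and Sec(X) is hypersurface}$(ii)$ the manifold $X$ is nondegenerate in $\mathbb{P}^{r}$ with $r = 2n + 2 - \delta = 2n + 2$, and a point (the case $n = 0$) can never be nondegenerate in $\mathbb{P}^{2}$; hence $n = 1$. Substituting $(n, \delta) = (1, 0)$ into the relation $\sum_{i=1}^{c} d_{i} = n + c - 2\delta = c + 1$ of Proposition~\ref{delta=(n+c-sum d_i)/2 and sum (d_i^2)<=3n+c} and using $d_{i} \geq 2$ gives $2c \leq c + 1$, whence $c = 1$ and $d_{1} = 2$. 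Therefore $r = 4$, $N = r + c = 5$, and $Z \subseteq \mathbb{P}^{5}$ is a complete intersection of type $(2)$, that is, a $4$-dimensional nonsingular quadric hypersurface.

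Next, because $c = 1$ and $d_{1} = 2$, I would apply Proposition~\ref{Z hyperquadric then Y Severi and X hyperplane section of Severi} directly: it yields that $Y$ is a Severi variety and that $X$ is a nonsingular hyperplane section of a Severi variety. To pin down which Severi varieties occur, I would read off the dimensions. By Proposition~\ref{X is QEL and Sec(X) is hypersurface}$(iv)$, $m = \dim(Y) = 2n - 2\delta = 2$, and the unique two-dimensional entry in the list of Proposition~\ref{classification of Severi varieties} is the Veronese surface $\nu_{2}(\mathbb{P}^{2}) \subseteq \mathbb{P}^{5}$; hence $Y$ is projectively equivalent to it. Likewise $\dim(X) = n = 1$, and since the Severi varieties have dimensions $2, 4, 8, 16$, their nonsingular hyperplane sections have dimensions $1, 3, 7, 15$, so only the Veronese surface produces a curve. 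Thus $X$ is a nonsingular hyperplane section of $\nu_{2}(\mathbb{P}^{2}) \subseteq \mathbb{P}^{5}$, sitting inside $\mathbb{P}^{4} = \mathbb{P}^{r}$, which matches the asserted conclusion.

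There is no serious obstacle here; the only points needing care are the exclusion of $n = 0$ via the nondegeneracy of $X$, and checking that the dimension counts single out the Veronese surface uniquely for both $Y$ and $X$.
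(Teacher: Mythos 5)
Your proposal is correct and follows essentially the same route as the paper: contrapositive of Proposition \ref{delta>0 if Z complete intersection and n>1} to get $n=1$, the formula $\sum d_i = n+c-2\delta$ with $d_i\geq 2$ to force $c=1$, $d_1=2$, then Proposition \ref{Z hyperquadric then Y Severi and X hyperplane section of Severi} combined with Zak's classification and a dimension count. Your explicit exclusion of $n=0$ via the nondegeneracy of $X$ in $\mathbb{P}^{2}$ is a small step the paper leaves implicit, but it does not change the argument.
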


\begin{proof}
By Proposition \ref{delta>0 if Z complete intersection and n>1}, $n=1$. Then by Proposition \ref{delta=(n+c-sum d_i)/2 and sum (d_i^2)<=3n+c}, $c=1$ and $d_{1}=2$. So the conclusion follows from Proposition \ref{Z hyperquadric then Y Severi and X hyperplane section of Severi} and the classification of Severi varieties (see Proposition \ref{classification of Severi varieties}).
\end{proof}

\subsection{\normalsize Hypersurfaces cases}

In this subsection, we study the case where $Z\subseteq\mathbb{P}^{N}$ is a hypersurface of degree $d$, namely $c=1$ and $d_{1}=d$. This has been studied by Staglian\`{o} in \cite{Sta12}. We need the following result from him.

\begin{prop} (\cite[Prop. 4.4]{Sta12}) \label{Hilbert polynomial}
Assume $\delta>0$, and $X$ is not a nonsingular hyperplane section of $\mathbb{P}^{2}\times\mathbb{P}^{2}\subseteq\mathbb{P}^{8}$. Then the following hold.

$(i)$ $X$ is a prime Fano manifold with index $i(X)=\frac{n+\delta}{2}$.

$(ii)$ The Hilbert polynomial $P_{X}(t)$ of $X$ satisfies that $P_{X}(0)=1$, $P_{X}(1)=r+1$, $P_{X}(2)=\frac{r(r+1)}{2}-1$, $P_{X}(t)=0$ for $-i(X)+1\leq t\leq -1$, and $P_{X}(t)=(-1)^{n}P_{X}(-t-i(X))$ for all $t\in\mathbb{Z}$. In particular, when the coindex $c(X):=n+1-i(X)\leq 5$, $P_{X}(t)$ is uniquely determined.
\end{prop}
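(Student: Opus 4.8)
The plan is to deduce (i) directly from the classification results already at hand, and then to read off every assertion of (ii) from cohomological computations that rely on the prime Fano structure produced in (i). For (i), since $\delta>0$ I would apply Proposition \ref{classification of X when is not a prime Fano manifold}: $X$ is either a prime Fano manifold of index $\frac{n+\delta}{2}$, or projectively equivalent to one of (a) $\upsilon_{2}(\mathbb{P}^{2})\subseteq\mathbb{P}^{5}$, (b) $\mathbb{P}^{2}\times\mathbb{P}^{2}\subseteq\mathbb{P}^{8}$, (c) a nonsingular hyperplane section of (b). Case (c) is ruled out by hypothesis. Cases (a) and (b) are Severi varieties, so from $r=2n+2-\delta$ (Proposition \ref{X is QEL and Sec(X) is hypersurface}(i)) one reads off $(n,\delta)=(2,1)$ and $(4,2)$, both with $\delta=\frac{n}{2}$, contradicting $\delta<\frac{n}{2}$ from Proposition \ref{delta<n/2}. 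Hence $X$ is a prime Fano manifold with $i(X)=\frac{n+\delta}{2}$, so $\text{Pic}(X)=\mathbb{Z}\mathcal{O}_{X}(1)$ and $K_{X}=\mathcal{O}_{X}(-i(X))$.

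For (ii) I would work with $P_{X}(t)=\chi(X,\mathcal{O}_{X}(t))$. Writing $\mathcal{O}_{X}(t)=K_{X}\otimes\mathcal{O}_{X}(t+i(X))$, Kodaira vanishing gives $H^{j}(X,\mathcal{O}_{X}(t))=0$ for $j>0$ and $t>-i(X)$, while $H^{0}(X,\mathcal{O}_{X}(t))=0$ for $t<0$ because $\mathcal{O}_{X}(1)$ is ample. Therefore $P_{X}(t)=h^{0}(\mathcal{O}_{X}(t))$ for $t\ge 0$ and $P_{X}(t)=0$ for $-i(X)+1\le t\le -1$. Then $P_{X}(0)=\chi(\mathcal{O}_{X})=1$ since $X$ is Fano (so $H^{j}(\mathcal{O}_{X})=0$ for $j>0$), and $P_{X}(1)=h^{0}(\mathcal{O}_{X}(1))=r+1$ by the nondegeneracy and linear normality of $X$ (Proposition \ref{X is QEL and Sec(X) is hypersurface}(ii)). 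The functional equation $P_{X}(t)=(-1)^{n}P_{X}(-t-i(X))$ is exactly Serre duality applied to $K_{X}=\mathcal{O}_{X}(-i(X))$.

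The heart of the matter is the value at $t=2$, where I would pass to the blow-up $W$. From $\tau^{*}H_{Z}=2\sigma^{*}H_{\mathbb{P}}-E_{\mathbb{P}}$ (Proposition \ref{X is QEL and Sec(X) is hypersurface}(iii)), together with $\sigma_{*}\mathcal{O}_{W}(-E_{\mathbb{P}})=\mathcal{I}_{X}$ and $R^{i}\sigma_{*}\mathcal{O}_{W}(-E_{\mathbb{P}})=0$ for $i>0$, one gets $H^{j}(\mathbb{P}^{r},\mathcal{I}_{X}(2))\cong H^{j}(W,\tau^{*}H_{Z})$ for $j=0,1$. Since $Z$ is smooth, the birational morphism $\tau$ satisfies $\tau_{*}\mathcal{O}_{W}=\mathcal{O}_{Z}$ and $R^{i}\tau_{*}\mathcal{O}_{W}=0$ for $i>0$ (smooth varieties have rational singularities), so $H^{j}(W,\tau^{*}H_{Z})\cong H^{j}(Z,\mathcal{O}_{Z}(1))$. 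As $Z$ is a nondegenerate complete intersection of dimension $r>1$ and (in the present hypersurface setting) $N=r+1$, this yields $h^{0}(\mathcal{I}_{X}(2))=N+1=r+2$ and $h^{1}(\mathcal{I}_{X}(2))=0$. The latter is the quadratic normality of $X$, and the sequence $0\to\mathcal{I}_{X}(2)\to\mathcal{O}_{\mathbb{P}^{r}}(2)\to\mathcal{O}_{X}(2)\to 0$ then gives $P_{X}(2)=h^{0}(\mathcal{O}_{X}(2))=\binom{r+2}{2}-(r+2)=\frac{r(r+1)}{2}-1$. This quadratic-normality step, reduced here to the standard vanishings $R^{i}\tau_{*}\mathcal{O}_{W}=0$ and $H^{1}(Z,\mathcal{O}_{Z}(1))=0$, is where the complete-intersection hypothesis on $Z$ is indispensable, and I expect it to be the main obstacle.

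Finally, for uniqueness when $c(X)\le 5$, the constraints above determine $P_{X}$ at the $i(X)+5$ consecutive integers $-i(X)-2,\dots,2$: the three computed values at $0,1,2$, their images under the functional equation at $-i(X),-i(X)-1,-i(X)-2$, and the $i(X)-1$ vanishings in between. Since $\deg P_{X}=n$ and $i(X)+5\ge n+1$ is equivalent to $c(X)=n+1-i(X)\le 5$, these values suffice to pin down the polynomial $P_{X}$ in that range.
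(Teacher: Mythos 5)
This proposition is never proved in the paper: it is imported wholesale from Staglian\`{o} (\cite[Prop. 4.4]{Sta12}), as the citation in its header indicates, so there is no internal argument to compare yours against --- what you have done is supply a proof of a statement the paper only cites. Your reconstruction is correct. Part (i) works exactly as you say: cases (a) and (b) of Proposition \ref{classification of X when is not a prime Fano manifold} are Severi varieties, hence satisfy $\delta=\frac{n}{2}$ (consistently with $r=2n+2-\delta$ from Proposition \ref{X is QEL and Sec(X) is hypersurface}(i)), contradicting Proposition \ref{delta<n/2}, while case (c) is excluded by hypothesis. In part (ii), the formal constraints --- $P_{X}(0)=1$, $P_{X}(1)=r+1$, the vanishing for $-i(X)+1\leq t\leq -1$, and the functional equation --- follow, as you argue, from Kodaira vanishing, the nondegeneracy and linear normality of $X$ (Proposition \ref{X is QEL and Sec(X) is hypersurface}(ii)), and Serre duality with $K_{X}=\mathcal{O}_{X}(-i(X))$. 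The genuinely non-formal point is $P_{X}(2)$, and your chain $H^{j}(\mathbb{P}^{r},\mathcal{I}_{X}(2))\cong H^{j}(W,\tau^{*}H_{Z})\cong H^{j}(Z,\mathcal{O}_{Z}(1))$, resting on $R^{i}\sigma_{*}\mathcal{O}_{W}(-E_{\mathbb{P}})=0$, $R^{i}\tau_{*}\mathcal{O}_{W}=0$ (smoothness of $Z$), Proposition \ref{X is QEL and Sec(X) is hypersurface}(iii), and $h^{0}(\mathcal{O}_{Z}(1))=r+2$, $h^{1}(\mathcal{O}_{Z}(1))=0$ for the hypersurface $Z$, is sound; it amounts exactly to saying that the quadrics defining $\phi$ form the complete system $|\mathcal{I}_{X}(2)|$ and that $X$ is quadratically normal, which is the content of \cite[Prop. 4.4(1)]{Sta12} that the paper invokes elsewhere (for the linear normality claim in Proposition \ref{X is QEL and Sec(X) is hypersurface}). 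Your interpolation count for the uniqueness claim ($3+3+(i(X)-1)=i(X)+5$ values at consecutive integers, which pin down a degree-$n$ polynomial precisely when $i(X)+5\geq n+1$, i.e. $c(X)\leq 5$) is also right. The only caveat worth flagging is that your argument for $P_{X}(2)$ uses $N=r+1$, so it is genuinely tied to the hypersurface setting of this subsection (as is Staglian\`{o}'s original statement); for $c\geq 2$ the same computation would give $\binom{r+2}{2}-(N+1)$ instead.
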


\begin{e.g.} \label{example Hilbert poly.}
Now we compute the Hilbert polynomial $P_{X}(t)$ for some possible values of $(n, \delta)$ with $\delta>0$.

If $(n, \delta)=(8, 2)$. Then $r=16$ by Proposition \ref{X is QEL and Sec(X) is hypersurface}$(i)$.
By Proposition \ref{Hilbert polynomial}, $X$ is prime Fano of index $5$. Let $Q(t)=P_{X}(\frac{t-5}{2})$. Then $Q(t)$ is an even polynomial function of degree $8$ with four roots $\pm 1$ and $\pm 3$. Thus, we can denote by $Q(t)=(t^{2}-1)(t^{2}-9)(a_{4}t^{4}+a_{2}t^{2}+a_{0})$. Since $Q(5)=1$, $Q(7)=17$ and $Q(9)=135$, we get $(a_{4}, a_{2}, a_{0})=(\frac{36}{2^{8}\cdot 8!}, \frac{24}{2^{8}\cdot 8!}, \frac{3780}{2^{8}\cdot 8!})$. Thus, $\deg(X)=36$ and $$P_{X}(t)=Q(2t+5)=\frac{1}{8!}(\prod\limits_{i=1}^{4}(t+i))(36t^{4}+360t^{3}+1374t^{2}+1245t+1680).$$
Similarly, if $(n, \delta)=(12, 4)$, then $\deg(X)=84$ and
$$P_{X}(t)=\frac{1}{12!}(\prod\limits_{i=1}^{7}(t+i))(t+4)(84t^{4}+1344t^{3}+8052t^{2}+21408t+23760).$$

\end{e.g.}


\begin{prop} \label{case d<=5 and case delta>n/3}
If either $d\leq 5$ or $\delta>\frac{n}{3}$, then $d=2$.
\end{prop}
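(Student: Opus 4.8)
The plan is to reduce the statement, in the hypersurface case $c=1$ (write $d=d_1$), to a finite list of triples $(n,\delta,d)$ and then eliminate each by a parity obstruction together with a degree-divisibility obstruction coming from the Hilbert polynomial of $X$. The numerical input is Proposition \ref{delta=(n+c-sum d_i)/2 and sum (d_i^2)<=3n+c}, which with $c=1$ gives $d=n+1-2\delta$ and $d^{2}\le 3n+1$; substituting $n=d-1+2\delta$, the latter becomes $\delta\ge\frac{(d-1)(d-2)}{6}$. The key observation is that the hypothesis region $\{d\le 5\}\cup\{\delta>\frac{n}{3}\}$ is finite: if $\delta>\frac{n}{3}$ then $(n,\delta)$ lies in the explicit list of Proposition \ref{case delta> n/3}, whose members have $d\in\{2,3,4\}$; and if $\delta\le\frac{n}{3}$ while $d\le 5$, then $n=d-1+2\delta$ forces $3\delta\le d-1+2\delta$, i.e. $\delta\le d-1\le 4$, again finitely many cases. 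Since $d\ge 2$ throughout the section, it suffices to show that no triple with $d\in\{3,4,5\}$ occurs; enumerating, these are $(n,\delta)\in\{(4,1),(6,2),(10,4),(14,6)\}$ for $d=3$, $\{(5,1),(7,2),(9,3),(13,5)\}$ for $d=4$, and $\{(8,2),(10,3),(12,4)\}$ for $d=5$.

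The next step is a parity reduction. By Proposition \ref{classification of X when is not a prime Fano manifold}, either $X$ is a prime Fano manifold of index $\frac{n+\delta}{2}$, or it is one of the three small manifolds listed there, which have $(n,\delta)\in\{(2,1),(4,2),(3,1)\}$ and hence $d\le 2$. Thus for the triples under consideration $X$ is prime Fano, so $\frac{n+\delta}{2}$ must be an integer. This discards immediately every triple with $n+\delta$ odd, namely $(4,1,3)$, $(7,2,4)$ and $(10,3,5)$.

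For each surviving triple $X$ is prime Fano of coindex $c(X)=\frac{n-\delta+2}{2}\le 5$, so by Proposition \ref{Hilbert polynomial} its Hilbert polynomial $P_X(t)$ is uniquely determined, and in particular so is $\deg(X)$. On the other hand, by Proposition \ref{VMRT of complete intersection is also complete intersection} the VMRT $Z^{(1)}$ is a complete intersection of type $(2,3,\ldots,d)$, whence $\deg(Z^{(1)})=d!$, and by Proposition \ref{deg(L_z) divides deg(X)}(i) (applicable since $\delta\ge 1$ in every remaining case) $d!$ divides $\deg(X)$. I would then run the computation exactly as in Example \ref{example Hilbert poly.}: writing $P_X(t)=Q\bigl(2t+i(X)\bigr)$ turns the functional equation together with the vanishing $P_X(-1)=\cdots=P_X(-i(X)+1)=0$ into a symmetric (or antisymmetric) polynomial $Q$ with prescribed roots, whose remaining coefficients are fixed by $P_X(0)=1$, $P_X(1)=r+1$, $P_X(2)=\frac{r(r+1)}{2}-1$. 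In the coindex-$3$ cases $(6,2,3)$ and $(5,1,4)$ this linear system is over-determined and already inconsistent, excluding those triples outright; in the coindex-$4$ and coindex-$5$ cases it is exactly determined and yields, for instance, $\deg(X)=34,80$ when $d=3$, $\deg(X)=35,82$ when $d=4$, and $\deg(X)=36,84$ when $d=5$ (the last two being Example \ref{example Hilbert poly.}), none of which is divisible by the relevant $d!\in\{6,24,120\}$. Each case therefore contradicts $d!\mid\deg(X)$, which forces $d=2$.

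The main obstacle is organizational rather than conceptual: one must verify that the finite list of $(n,\delta,d)$ is genuinely complete, the delicate point being the interaction of $d=n+1-2\delta$ with both $\delta>\frac{n}{3}$ (controlled by Proposition \ref{case delta> n/3}) and $\delta\le\frac{n}{3}$ (controlled by $\delta\le d-1$), and then to carry out the Hilbert-polynomial-and-degree computation reliably in each of the finitely many cases. The divisibility relation $\deg(Z^{(1)})=d!\mid\deg(X)$ is what makes the elimination uniform; the work lies in confirming that each determined degree fails that divisibility and that the coindex-$3$ systems are truly inconsistent.
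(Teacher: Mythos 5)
Your proposal is correct and is essentially the paper's own argument: both reduce to the finitely many triples $(n,\delta,d)$ with $3\le d\le 5$ via Propositions \ref{case delta> n/3} and \ref{delta=(n+c-sum d_i)/2 and sum (d_i^2)<=3n+c}, pin down $\deg(X)$ through the Hilbert polynomial of Proposition \ref{Hilbert polynomial}, and contradict the divisibility $\deg(Z^{(1)})=d!\mid\deg(X)$ coming from Propositions \ref{VMRT of complete intersection is also complete intersection} and \ref{deg(L_z) divides deg(X)}. The only difference is that for $d=3,4$ the paper simply cites \cite[Prop. 8.2, Prop. 8.3]{Sta12} for the admissible values of $(n,\delta,r,\deg(X))$, whereas you re-derive those lists directly (parity of $n+\delta$ from the prime Fano index $\frac{n+\delta}{2}$, inconsistency of the over-determined coindex-$3$ systems for $(6,2,3)$ and $(5,1,4)$, and the determined degrees $34,80,35,82$); your claimed values agree with those the paper quotes, and the inconsistency and degree computations do check out, so the plan goes through.
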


\begin{proof}
If $\delta>\frac{n}{3}$, then $(n, \delta)=(7, 3)$, $(10, 4)$, $(13, 5)$, $(14, 6)$, or $(15, 7)$ by Proposition \ref{case delta> n/3}. By Proposition \ref{delta=(n+c-sum d_i)/2 and sum (d_i^2)<=3n+c}, the corresponding $d$ is $2, 3, 4, 3$, or $2$ respectively. Thus, we only need to exclude the cases with $3\leq d\leq 5$. Remark that $\deg(Z^{(1)})=d!$ by Proposition \ref{VMRT of complete intersection is also complete intersection}. If $d\geq 3$, then $d!$ divides $\deg(X)$ by Proposition \ref{deg(L_z) divides deg(X)}.

Case 1. Assume $d=5$. By the discussion above, $\delta\leq\frac{n}{3}$. By Proposition \ref{delta=(n+c-sum d_i)/2 and sum (d_i^2)<=3n+c}, $8\leq n\leq 12$, and $\delta=\frac{n}{2}-2$. By Proposition \ref{Hilbert polynomial}$(i)$, $n\neq 9, 10$, or $11$. Then  $(n, \delta)=(8, 2)$ or $(12, 4)$. By Example \ref{example Hilbert poly.}, $\deg(X)=36$ or $84$ respectively. However, $\deg(Z^{(1)})=120$, and it divides neither $36$ nor $84$. We get a contradiction.

Case 2. Assume $d=4$. By \cite[Prop. 8.3]{Sta12}, the only possible values of $(n, \delta, r, \deg(X))$ are $(9, 3, 17, 35)$ and $(13, 5, 23, 82)$. On the other hand, $\deg(Z^{(1)})=24$, and it divides neither $35$ nor $82$. It is a contradiction.

Case 3. Assume $d=3$. The only possible values of $(n, \delta, r, \deg(X))$ are $(10, 4, 18, 34)$ and $(14, 6, 24, 80)$ by \cite[Prop. 8.2]{Sta12}.  However, $\deg(Z^{(1)})=6$, and it divides neither $34$ nor $80$. It is a contradiction.
\end{proof}

\begin{cor} \label{case delta<=1}
If $\delta\leq 2$, then $d=2$.
\end{cor}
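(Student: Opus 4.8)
The plan is to reduce the statement to Proposition \ref{case d<=5 and case delta>n/3}, which already delivers $d=2$ as soon as one knows that $d\leq 5$. So the only thing I need to do is bound the degree $d$ in terms of the secant defect $\delta$. In the hypersurface setting we have $c=1$ and $d_{1}=d$, so Proposition \ref{delta=(n+c-sum d_i)/2 and sum (d_i^2)<=3n+c} specializes to the two relations $d=n+1-2\delta$ and $d^{2}\leq 3n+1$.

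From the first relation I would solve $n=d-1+2\delta$ and substitute into the second, getting $d^{2}\leq 3(d-1+2\delta)+1$, that is $(d-1)(d-2)\leq 6\delta$. Under the hypothesis $\delta\leq 2$ this yields $(d-1)(d-2)\leq 12$, and since $d=6$ already gives $(d-1)(d-2)=20>12$, we conclude $d\leq 5$. As $Z\subsetneq\mathbb{P}^{r+1}$ is a nondegenerate hypersurface, we also have $d=\deg(Z)\geq 2$, so in fact $2\leq d\leq 5$.

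Applying Proposition \ref{case d<=5 and case delta>n/3} to the range $d\leq 5$ then forces $d=2$, which finishes the argument. I do not expect a genuine obstacle at this stage: all the substantive work sits inside Proposition \ref{case d<=5 and case delta>n/3}, where the intermediate degrees $d=3,4,5$ are ruled out via the divisibility $\deg(Z^{(1)})=d!\mid\deg(X)$ together with the explicit Hilbert polynomial computations of Example \ref{example Hilbert poly.}. The corollary simply records the elementary estimate $(d-1)(d-2)\leq 6\delta$, whose role is to ensure that the hypothesis $\delta\leq 2$ lands precisely in the range already handled.
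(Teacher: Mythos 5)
Your proof is correct and follows essentially the same route as the paper: both derive $d\leq 5$ by combining the relations $d=n+1-2\delta$ and $d^{2}\leq 3n+1$ from Proposition \ref{delta=(n+c-sum d_i)/2 and sum (d_i^2)<=3n+c}, then invoke Proposition \ref{case d<=5 and case delta>n/3}. The only difference is cosmetic — you package the elimination of $n$ as the inequality $(d-1)(d-2)\leq 6\delta$, while the paper records the equivalent bounds $n\leq 8$ and $d\leq 5$ directly.
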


\begin{proof}
By Proposition \ref{delta=(n+c-sum d_i)/2 and sum (d_i^2)<=3n+c}, $n\leq 8$ and $d\leq 5$. Hence, $d=2$ by Proposition \ref{case d<=5 and case delta>n/3}.
\end{proof}

Now we are ready to prove Theorem \ref{introduction: thm. Z hypersurface cases}.

\begin{proof}[Proof of Theorem \ref{introduction: thm. Z hypersurface cases}]
By Proposition \ref{Z hyperquadric then Y Severi and X hyperplane section of Severi}, it suffices to show $d=2$. Now assume that $d\geq 3$. By Proposition \ref{delta=(n+c-sum d_i)/2 and sum (d_i^2)<=3n+c}, $d=n+1-2\delta$. Since $d\geq 3$, $n\neq\delta$. By Corollary \ref{case delta<=1}, $\delta\geq 3$. Then by Proposition \ref{L^1(V) is QEL and Divisibility Theorem} $(iii)$, $\delta\leq 2[log_{2}n]+2$. On the other hand, $\delta\geq\frac{n+1-\sqrt{3n+1}}{2}$ by Proposition \ref{delta=(n+c-sum d_i)/2 and sum (d_i^2)<=3n+c}. Hence, $n\leq 33$ and $\delta\leq 12$.

By Proposition \ref{L^1(V) is QEL and Divisibility Theorem} $(ii)$ and the formulae $d=n+1-2\delta$ and $d^{2}\leq 3n+1$ in Proposition \ref{delta=(n+c-sum d_i)/2 and sum (d_i^2)<=3n+c}, the possible values of $(n, \delta, d)$ are $(26, 10, 7)$, $(25, 9, 8)$, $(15, 7, 2)$, $(18, 6, 7)$, $(14, 6, 3)$, $(13, 5, 4)$, $(12, 4, 5)$, $(10, 4, 3)$, $(9, 3, 4)$ and $(7, 3, 2)$. By Proposition \ref{case d<=5 and case delta>n/3}, only the case $(n, \delta, d)=(18, 6, 7)$ is possible.

By Proposition \ref{L^1(V) is QEL and Divisibility Theorem} $(i)$, $X^{(1)}\subseteq\mathbb{P}^{17}$ is a QEL-manifold of dimension $10$ such that $\delta(X^{(1)})=4$ and $\text{Sec}(X^{(1)})=\mathbb{P}^{17}$, and $X^{(2)}\subseteq\mathbb{P}^{9}$ is a QEL-manifold of dimension $5$ such that $\delta(X^{(2)})=2$ and $\text{Sec}(X^{(2)})=\mathbb{P}^{9}$. By Proposition \ref{L^1(V) is QEL and Divisibility Theorem}$(iv)$, $X^{(2)}$ is not a prime Fano manifold. Then $X^{(2)}\subseteq\mathbb{P}^{9}$ is projectively equivalent to $\mathbb{P}^{1}\times\mathbb{P}^{4}\subseteq\mathbb{P}^{9}$ by Corollary \ref{classification QEL (not prime Fano) when Sec(V)=P^r and delta>0}.
Since the VMRT of $X^{(1)}$ at a general point is projectively equivalent to the VMRT of $\mathbb{G}(1, 6)$ at a general point, we know that $X^{(1)}$ is isomorphic to $\mathbb{G}(1, 6)$ (see for example \cite[Main Thm.]{Mok08}). Since both $X^{(1)}\subseteq\mathbb{P}^{17}$ and $\mathbb{G}(1, 6)\subseteq\mathbb{P}^{17}$ are covered by lines, the isomorphism between them are induced by a linear subsystem of $|\mathcal{O}_{X^{(1)}}(1)|$. Thus, $\dim(H^{0}(X^{(1)}, \mathcal{O}_{X^{(1)}}(1)))=\dim(H^{0}(\mathbb{G}(1, 6), \mathcal{O}_{\mathbb{G}(1, 6)}(1)))\geq 21$. On the other hand, the fact $\text{Sec}(X^{(1)})=\mathbb{P}^{17}$ implies that the QEL-manifold $X^{(1)}\subseteq\mathbb{P}^{17}$ is linearly normal (see Subsection \ref{subsection: QEL-manifolds}), which is a contradiction.
\end{proof}

\subsection{\normalsize When $Y$ is smooth}

Now we return to the case where $Z\subseteq\mathbb{P}^{N}$ is a complete intersection of type $(d_{1}, \ldots, d_{c})$. Firstly, we need a technique result on Severi varieties.

\begin{lem} \label{c. i. of type (2,...,2) containging Severi var. must be hypersurface}
Let $V\subseteq\mathbb{P}^{N}$ be a non-degenerate Severi variety, and $M\subseteq\mathbb{P}^{N}$ be a $c$-codimensional complete intersection of type $(2, \ldots, 2)$ containing $V$. If $M$ is smooth and connected, then $c=1$, i.e. $M$ is a quadric hypersurface containing $V$.
\end{lem}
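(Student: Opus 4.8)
The plan is to assume $c\geq 2$ and derive a contradiction, so that $c=1$ is forced (the case $c=1$ being exactly the conclusion that $M$ is a quadric hypersurface). First I would invoke the classification of Severi varieties (Proposition \ref{classification of Severi varieties}): $V$ is projectively equivalent to one of $v_{2}(\mathbb{P}^{2})\subseteq\mathbb{P}^{5}$, $\mathbb{P}^{2}\times\mathbb{P}^{2}\subseteq\mathbb{P}^{8}$, $\mathbb{G}(1,5)\subseteq\mathbb{P}^{14}$, $\mathbb{OP}^{2}\subseteq\mathbb{P}^{26}$, whose dimensions and degrees are $(\dim V,\deg V)=(2,4),(4,6),(8,14),(16,78)$, and which all satisfy $N=\tfrac{3}{2}\dim V+2$. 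Since $M$ is a smooth connected complete intersection of $c$ quadrics it is irreducible, of dimension $N-c$ and degree $2^{c}$; moreover it is nondegenerate in $\mathbb{P}^{N}$, as it contains the nondegenerate $V$. Note also that a Severi variety is not itself a complete intersection, so $V\neq M$; in particular if $\dim V=\dim M$ then $V\subseteq M$ irreducible of the same dimension would force $V=M$, which is absurd, so I may assume $\dim V<\dim M$.

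The main tool is the divisibility statement of Proposition \ref{my result: deg(var)|deg(subvar) and codim_spansub(sub)>=codim_spanvar(var)}, applied with $M$ in the role of the ambient complete intersection and $V$ in the role of the subvariety. I would first verify its hypothesis $\dim V>\tfrac{1}{2}\dim M$: substituting $N=\tfrac{3}{2}\dim V+2$, this is equivalent to $\dim V+2c>4$, which holds for all $c\geq 2$ since $\dim V\geq 2$. Hence $\deg M=2^{c}$ divides $\deg V$. For $V=\mathbb{P}^{2}\times\mathbb{P}^{2}$, $\mathbb{G}(1,5)$, $\mathbb{OP}^{2}$ the degree is $6$, $14$, $78$ respectively, each divisible by $2$ exactly once, so $2^{c}\mid\deg V$ with $c\geq 2$ is impossible and these three cases are excluded outright. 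For the Veronese, $\deg V=4=2^{2}$ only yields $c\leq 2$, hence $c=2$ survives; this is the one case where the purely numerical obstruction is inconclusive.

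The hard part is therefore the remaining case $V=v_{2}(\mathbb{P}^{2})$ with $c=2$, where $M=Q_{1}\cap Q_{2}\subseteq\mathbb{P}^{5}$ is a smooth threefold of degree $4$ containing $V$ as a surface, i.e.\ as a divisor. Here I would argue through the Picard group: by the Lefschetz theorem for complete intersections of dimension $\geq 3$ one has $\text{Pic}(M)=\mathbb{Z}(\mathcal{O}_{M}(1))$, so $\mathcal{O}_{M}(V)\cong\mathcal{O}_{M}(k)$ for some $k\geq 1$. Intersecting with $\mathcal{O}_{M}(1)^{2}$ gives $\deg V=k\deg M=4k$, and $\deg V=4$ forces $k=1$, i.e.\ $V\in|\mathcal{O}_{M}(1)|$. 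Since the restriction $H^{0}(\mathbb{P}^{5},\mathcal{O}(1))\to H^{0}(M,\mathcal{O}_{M}(1))$ is an isomorphism for the complete intersection $M$, the divisor $V$ would then be the intersection of $M$ with a hyperplane of $\mathbb{P}^{5}$, making $V$ degenerate and contradicting the nondegeneracy of the Veronese surface. I expect the only genuine subtlety to be exactly this degree-$4$ coincidence for the Veronese; every other configuration is settled by the divisibility obstruction alone.
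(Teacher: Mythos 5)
Your proposal is correct and follows essentially the same route as the paper's proof: the divisibility theorem (Proposition \ref{my result: deg(var)|deg(subvar) and codim_spansub(sub)>=codim_spanvar(var)}) plus Zak's classification reduces everything to the Veronese surface with $c=2$, and that case is killed by $\text{Pic}(M)=\mathbb{Z}(\mathcal{O}_{M}(1))$ forcing $V$ to be a hyperplane section of $M$, contradicting nondegeneracy. Your only additions — checking $\dim V>\tfrac{1}{2}\dim M$ explicitly and computing $k=1$ via intersection with $\mathcal{O}_{M}(1)^{2}$ — are details the paper states more tersely.
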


In fact, the statement of this Lemma appears in \cite[Example 3.24$(i)$]{AS13b} without proof.

\begin{proof}
Assume that $M$ is smooth and connected, and $c\geq 2$. By Proposition \ref{classification of Severi varieties}, $V\subseteq\mathbb{P}^{N}$ is projectively equivalent to $\nu_{2}(\mathbb{P}^{2})\subseteq\mathbb{P}^{5}$, $\mathbb{P}^{2}\times\mathbb{P}^{2}\subseteq\mathbb{P}^{8}$, $\mathbb{G}(1, 5)\subseteq\mathbb{P}^{14}$, or $\mathbb{OP}^{2}\subseteq\mathbb{P}^{26}$. In all cases, $\dim(V)>\frac{\dim(M)}{2}$. By Proposition \ref{my result: deg(var)|deg(subvar) and codim_spansub(sub)>=codim_spanvar(var)}, $\deg(M)=2^{c}$ divides $\deg(V)$. Since $\deg(V)=4$, $6$, $14$, or $78$ in the corresponding four cases (see for example \cite[page 15-16]{Sta13}), we get that $c=2$, $V\subseteq\mathbb{P}^{N}$ is projectively equivalent to $\nu_{2}(\mathbb{P}^{2})\subseteq\mathbb{P}^{5}$ and $\deg(M)=\deg(V)$. By the isomorphism $\text{Pic}(\mathbb{P}^{N})\cong\text{Pic}(M)$ and the fact $V$ is a divisor on $M$ with the same degree as projective varieties, we know that $\mathcal{O}_{M}(V)=\mathcal{O}_{M}(1)$. Since the natural map $H^{0}(\mathbb{P}^{N}, \mathcal{O}_{\mathbb{P}^{N}}(1))\rightarrow H^{0}(M, \mathcal{O}_{M}(1))$ is surjective, there is a hyperplane $H$ in $\mathbb{P}^{N}$ such that $V=H\cap V$, which contradicts the fact that $V=\nu_{2}(\mathbb{P}^{2})\subseteq\mathbb{P}^{5}$ is non-degenerate. This finishes the proof.
\end{proof}

Now we are ready to prove Theorem \ref{introduction: thm. c.i. and Y smooth implies c=1 and deg(Z)=2}.

\begin{proof}[Proof of Theorem \ref{introduction: thm. c.i. and Y smooth implies c=1 and deg(Z)=2}]
By Lemma \ref{c>= n-2 delta}, $c\geq n-2\delta$. Then by Corollary \ref{e_1=n-2delta-c, e_2<=... etc.}, $c=n-2\delta$ and $d_{1}=\cdots=d_{c}=2$. In particular, $Z$ is a quadratic manifold. By Corollary \ref{D is the intersection of Z and Sec(Y)}, $\text{Sec}(Y)\cap Z=D_{\mathbb{P}}$. Hence, $\text{Sec}(Y)\neq\mathbb{P}^{N}$. Note that $m=\frac{2}{3}(N-2)$ by Proposition \ref{X is QEL and Sec(X) is hypersurface}$(i)(iv)$, and $Y$ is nondegenerate in $\mathbb{P}^{N}$ by Proposition \ref{Y is nondegenerate}. Hence, $Y\subseteq\mathbb{P}^{N}$ is a Severi variety by definition. By Lemma \ref{c. i. of type (2,...,2) containging Severi var. must be hypersurface}, $c=1$. The rest follows from Proposition \ref{Z hyperquadric then Y Severi and X hyperplane section of Severi}.
\end{proof}



Now we want to apply Theorem \ref{introduction: thm. c.i. and Y smooth implies c=1 and deg(Z)=2} to study the cases where $Z$ is a quadratic projective manifold. To do this, we need some properties on complete intersections.

\begin{prop} (\cite[Thm. 3.8(4)]{IR13}) \label{Hartshorne Conj. on quadratic mfds}
Let $V\subseteq\mathbb{P}^{r}$ be a quadratic manifold of dimension $n>\frac{2r}{3}$. Then $V$ is a complete intersection in $\mathbb{P}^{r}$.
\end{prop}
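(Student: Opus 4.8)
The plan is to reduce the assertion to a statement about the number of quadrics through $V$ and then run an induction via the variety of minimal rational tangents. Put $c=r-n$, so that the hypothesis reads $c<\frac{r}{3}$, equivalently $n>2c$. The first point I would record is that the strict inequality forces $\text{Sec}(V)=\mathbb{P}^{r}$: from $n>\frac{2r}{3}>\frac{2}{3}(r-2)$ and Proposition \ref{sec(V) neq P^r then delta<=n/2}, the alternative $\text{Sec}(V)\neq\mathbb{P}^{r}$ is impossible. This is the arithmetic heart of the problem, since the boundary value $n=\frac{2}{3}(r-2)$ is realized by the Severi varieties, which are quadratic manifolds that are \emph{not} complete intersections (compare Proposition \ref{classification of Severi varieties} and Lemma \ref{c. i. of type (2,...,2) containging Severi var. must be hypersurface}); thus any correct argument must use the strictness of the inequality essentially. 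Because $V$ is by hypothesis the scheme-theoretic intersection of quadrics and has codimension $c$, the goal becomes: show that $h^{0}(\mathbb{P}^{r},\mathcal{I}_{V}(2))=c$ and that $c$ general members of $|\mathcal{I}_{V}(2)|$ meet in codimension exactly $c$. These two facts say precisely that those quadrics form a regular sequence cutting out $V$, i.e. that $V$ is their complete intersection.

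For the inductive mechanism I would first show that $V$ is covered by lines -- a nondegenerate smooth quadratic manifold with $\text{Sec}(V)=\mathbb{P}^{r}$ is conic-connected, and in this range of dimensions the second fundamental form forces the presence of lines. Then the VMRT $V^{(1)}\subseteq\mathbb{P}^{n-1}$ is defined, and by the result on quadratic manifolds recalled in Subsection \ref{subsection: quadratic manifolds} it is again a (possibly reducible) quadratic manifold. At a general point the second fundamental form identifies the quadrics through $V$ with a linear system of quadrics on $\mathbb{P}^{n-1}$ whose base locus is $V^{(1)}$, so the codimension $c'$ of $V^{(1)}$ in $\mathbb{P}^{n-1}$ satisfies $c'\leq c$. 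The aim is to conclude, inductively, that $V^{(1)}$ is itself a complete intersection of quadrics: when $c'=c$ the second fundamental form already consists of $c$ quadrics meeting in the expected codimension and the conclusion is immediate, while when $c'<c$ one wants the inductive hypothesis to apply. Proposition \ref{VMRT of complete intersection is also complete intersection} records the compatibility between the type of a complete intersection and that of its VMRT which keeps such an induction self-consistent.

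The delicate -- and in my view hardest -- part is twofold, and both difficulties are really the same phenomenon. First, the raw inequality $n>\frac{2r}{3}$ does not obviously descend to $V^{(1)}$, so the inductive statement must be phrased in terms of structural properties (quadratic, covered by lines, $\text{Sec}=\mathbb{P}^{r}$, and a suitable nondegeneracy of the second fundamental form) that are engineered to survive passage to the VMRT, with the numerical hypothesis on $V$ used only to initialize them. Second, one must lift the conclusion from $V^{(1)}$ back to $V$: using the second fundamental form sequence together with Barth--Larsen (which in this range yields $\text{Pic}(V)=\mathbb{Z}(\mathcal{O}_{V}(1))$ and enough projective normality in degree two) I would relate $h^{0}(\mathcal{I}_{V}(2))$ to $h^{0}(\mathcal{I}_{V^{(1)}}(2))$ and show that no excess quadric appears, forcing $h^{0}(\mathcal{I}_{V}(2))=c$; the identity $\text{Sec}(V)=\mathbb{P}^{r}$ then prevents the resulting $c$ quadrics from acquiring extra components, giving the regular sequence. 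I expect the vanishing of this excess to be the true obstacle: the Severi varieties show that $h^{0}(\mathcal{I}_{V}(2))$ can greatly exceed $c$ at the boundary (for instance $\nu_{2}(\mathbb{P}^{2})\subseteq\mathbb{P}^{5}$ is cut out by six quadrics although its codimension is three), so the count must convert the strict inequality $n>\frac{2r}{3}$, through Zak's linear normality theorem, into genuine nondegeneracy of the second fundamental form. Pinning down this nondegeneracy -- equivalently, excluding the degenerations that produce Severi-type behaviour -- is where the substance of the proof lies.
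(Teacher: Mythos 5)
First, a point of reference: the paper does not prove this proposition at all --- it is quoted verbatim from Ionescu--Russo \cite[Thm. 3.8(4)]{IR13}, so there is no internal argument to compare yours against; your proposal can only be measured against that original proof, whose overall shape (lines on $V$, the VMRT $V^{(1)}$ being again quadratic, control of codimension via the second fundamental form, and an inductive mechanism) your outline does echo.

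That said, your proposal is a plan, not a proof, and the gaps sit exactly at the load-bearing points. (a) The assertion that $V$ is covered by lines is waved through (``the second fundamental form forces the presence of lines''); in \cite{IR13} this is itself a substantial part of Theorem 3.8, not a formality. (b) The induction is never actually set up: you concede that the hypothesis $n>\frac{2r}{3}$ does not descend to $V^{(1)}$ and propose to replace it by unspecified ``structural properties engineered to survive passage to the VMRT,'' but you neither state these properties nor verify they persist; moreover, to invoke anything like Proposition \ref{V^1 c. i. then codim V <= codim V^1} one needs $\dim(V^{(1)})\geq\frac{n-1}{2}$ together with irreducibility and nondegeneracy of $V^{(1)}$, none of which you establish. (c) The counting $h^{0}(\mathbb{P}^{r},\mathcal{I}_{V}(2))=c$ plus the regular-sequence property --- which you correctly identify as equivalent to the theorem --- is precisely what you defer: ``I expect the vanishing of this excess to be the true obstacle\dots is where the substance of the proof lies.'' A proof cannot outsource its central step to an expectation. (d) Even granting that $V^{(1)}$ is a complete intersection, the lift back to $V$ requires a theorem (the converse direction of \cite[Thm. 2.4]{IR13}, of which the paper only quotes the codimension inequality as Proposition \ref{V^1 c. i. then codim V <= codim V^1}); the appeal to Barth--Larsen and ``enough projective normality in degree two'' does not supply it. Your framing of the problem is sound --- in particular the observation that the Severi varieties sit at the boundary $n=\frac{2}{3}(r-2)$ and force any proof to use strictness, via Proposition \ref{sec(V) neq P^r then delta<=n/2}, is exactly the right sanity check --- but as it stands the proposal records where the difficulties are rather than resolving any of them.
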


\begin{prop} (\cite[Thm. 2.4(4)]{IR13}) \label{V^1 c. i. then codim V <= codim V^1}
Let $V\subseteq\mathbb{P}^{r}$ be a nondegenerate  prime Fano manifold. Suppose $V^{(1)}\subseteq\mathbb{P}^{n-1}$ is a complete intersection such that $\dim(V^{(1)})\geq\frac{n-1}{2}$. Then $\text{codim}_{\mathbb{P}^{r}}(V)\leq\text{codim}_{\mathbb{P}^{n-1}}(V^{(1)})$.
\end{prop}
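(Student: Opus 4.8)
The plan is to deduce the codimension inequality by squeezing the number of quadrics through the VMRT between the two codimensions. Write $n=\dim V$, so that $V^{(1)}\subseteq\mathbb{P}^{n-1}$ sits inside the projectivized (co)tangent space at a general point $x\in V$, and set $q=\dim H^{0}(\mathbb{P}^{n-1},\mathcal{I}_{V^{(1)}}(2))$, the number of independent quadric hypersurfaces of $\mathbb{P}^{n-1}$ containing $V^{(1)}$. I would establish the two inequalities
\[
\text{codim}_{\mathbb{P}^{r}}(V)\ \le\ q\ \le\ \text{codim}_{\mathbb{P}^{n-1}}(V^{(1)}),
\]
whose composite is exactly the assertion. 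Note that this route never invokes the index formula $\dim V^{(1)}=i(V)-2$ directly, although it is available since $V$ is a prime Fano manifold covered by lines.

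The right-hand inequality is the formal one and is where the complete intersection hypothesis is used. Since $V^{(1)}$ is a nondegenerate complete intersection, say of type $(a_{1},\dots,a_{c'})$ with $c'=\text{codim}_{\mathbb{P}^{n-1}}(V^{(1)})$ and all $a_{j}\ge 2$ (the VMRT of a prime Fano manifold is nondegenerate, which forces $a_{j}\ge 2$; if one insisted on $a_{j}=1$ one could pass to the linear span), it is projectively normal with homogeneous ideal minimally generated by its $c'$ defining equations. Hence the degree-two piece of the ideal is spanned by the generators of degree $2$, giving $q=\#\{j:a_{j}=2\}\le c'$. Nothing beyond the complete intersection structure is needed here.

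The left-hand inequality is the heart of the matter. I would work with the projective second fundamental form $\mathrm{II}_{V,x}\colon S^{2}T_{x}V\to N_{V/\mathbb{P}^{r},x}$ at a general point, whose target has dimension $\text{codim}_{\mathbb{P}^{r}}(V)$; its transpose carries a conormal vector $\nu$ to a quadric $\mathrm{II}_{\nu}$ on $\mathbb{P}(T_{x}V)$. Because $V$ is covered by lines and a line $\ell\subseteq V$ is straight in $\mathbb{P}^{r}$, one has $\mathrm{II}(\xi,\xi)=0$ for every direction $\xi$ tangent to a line of $V$ through $x$; as these directions sweep out $V^{(1)}$, every quadric $\mathrm{II}_{\nu}$ vanishes on $V^{(1)}$. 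Thus the image of the transpose lies in $H^{0}(\mathbb{P}^{n-1},\mathcal{I}_{V^{(1)}}(2))$ and $q\ge\operatorname{rank}(\mathrm{II}_{V,x}^{\mathsf{T}})$. To reach $q\ge\text{codim}_{\mathbb{P}^{r}}(V)$ I must show this transpose is injective, i.e. that no nonzero conormal direction annihilates the entire second fundamental form.

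That injectivity is the step I expect to be the real obstacle, and it is where the hypothesis $\dim V^{(1)}\ge\frac{n-1}{2}$ should be spent: a conormal vector in the kernel would make $V$ tangentially degenerate along a positive-dimensional subspace, and a Zak-type tangency estimate — fed by the fact that $V^{(1)}$ is nondegenerate, spanning, and of dimension at least half of the ambient $\mathbb{P}^{n-1}$ — should exclude this, forcing $\mathrm{II}_{V,x}^{\mathsf{T}}$ to have full rank $\text{codim}_{\mathbb{P}^{r}}(V)$. Granting the injectivity, the chain $\text{codim}_{\mathbb{P}^{r}}(V)=\operatorname{rank}(\mathrm{II}_{V,x}^{\mathsf{T}})\le q\le c'=\text{codim}_{\mathbb{P}^{n-1}}(V^{(1)})$ finishes the proof. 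I would sanity-check the whole argument against the borderline models — the VMRT of a smooth quadric (where $q=\text{codim}(V)=c'=1$) and of a cubic hypersurface (where $q=\text{codim}(V)=1<c'=2$) — since in both cases one verifies $q=\text{codim}_{\mathbb{P}^{r}}(V)$ and the dimension hypothesis is precisely what holds in the relevant range.
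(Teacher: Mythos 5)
The paper itself does not prove this Proposition: it is imported verbatim from \cite{IR13}, so your attempt can only be measured against that proof, and your squeeze $\text{codim}_{\mathbb{P}^{r}}(V)\leq q\leq\text{codim}_{\mathbb{P}^{n-1}}(V^{(1)})$ through the quadrics containing the VMRT is indeed the correct skeleton. Your right-hand inequality is fine: for a complete intersection all of whose degrees are at least $2$, the degree-two piece of the homogeneous ideal is spanned by the degree-two generators, so $q\leq c'$. (Even here there is a loose end: you assert that the VMRT of a prime Fano manifold is linearly nondegenerate, which you do not prove, and "passing to the linear span" does not obviously repair the chain, since restricting the quadrics of $\mathrm{II}$ to the span can create a kernel.) The genuine gap is the left-hand inequality, the injectivity of $\mathrm{II}_{x}^{\mathsf{T}}$, which you explicitly leave as a hope. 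This is not a removable formality; it is the entire content of the statement, and it genuinely fails without the hypothesis $\dim V^{(1)}\geq\frac{n-1}{2}$. For instance, $\mathbb{P}^{1}\times\mathbb{P}^{1}\subseteq\mathbb{P}^{5}$ embedded by $\mathcal{O}(1,2)$ is smooth, nondegenerate and covered by lines; its VMRT at any point is a single point of $\mathbb{P}^{1}=\mathbb{P}((T_{x}V)^{*})$, every quadric of $\mathrm{II}_{x}$ vanishes on it, hence $\mathrm{rank}(\mathrm{II}_{x})\leq 2<3=\text{codim}(V)$, so $\mathrm{II}_{x}^{\mathsf{T}}$ has a kernel (and the conclusion of the Proposition fails as well).

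Moreover, the tool you point to for closing the gap is the wrong one. A conormal vector $\nu$ with $\mathrm{II}_{\nu}=0$ produces a hyperplane having contact of order at least $3$ with $V$ at the single point $x$; it does not produce tangency along a positive-dimensional subvariety, so Zak's theorem on tangencies has no purchase on it. Nondegenerate space curves illustrate the distinction: the osculating plane at a general point kills the whole second fundamental form, yet its tangency locus is just that point. What actually closes the argument is a different classical input (Griffiths--Harris): at a general point of a smooth nondegenerate $V^{n}\subseteq\mathbb{P}^{r}$ one has $\mathrm{rank}(\mathrm{II}_{x})\geq\min(\text{codim}(V),n)$, equivalently the second osculating space has dimension at least $\min(r,2n)$. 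Granting this, the proof finishes in two strokes: if $\text{codim}(V)\geq n$, then $n\leq\mathrm{rank}(\mathrm{II}_{x})\leq q\leq\text{codim}(V^{(1)})\leq\frac{n-1}{2}$, which is absurd; hence $\text{codim}(V)<n$, so $\mathrm{rank}(\mathrm{II}_{x})=\text{codim}(V)$ and $\text{codim}(V)\leq q\leq\text{codim}(V^{(1)})$. Note that the dimension hypothesis is spent against this rank bound, not on a tangency estimate as you propose. In summary: right strategy, but the load-bearing step is missing, and the repair you sketch for it would not work.
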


As a application of these results, we get the following

\begin{cor} \label{Z quadratic var. implies it quadric hypersurface}
Assume that $r\leq\frac{9}{5}n+2$, $Y$ is smooth, and $Z\subsetneq\mathbb{P}^{N}$ is a nondegenerate prime Fano quadratic projective manifold (we do not assume $Z$ to be a complete intersection). Then $Z$ is a quadric hypersurface, $Y$ is a Severi variety and $X$ is a hyperplane section of a Severi variety.
\end{cor}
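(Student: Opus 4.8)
The plan is to reduce the statement to Theorem~\ref{introduction: thm. c.i. and Y smooth implies c=1 and deg(Z)=2} by proving that the quadratic manifold $Z$ is in fact a complete intersection in $\mathbb{P}^{N}$; once this is established, the hypothesis that $Y$ is smooth yields all three conclusions at once. The tool for detecting complete intersections is the Hartshorne-type criterion of Proposition~\ref{Hartshorne Conj. on quadratic mfds}, which declares a quadratic manifold a complete intersection as soon as it occupies more than two thirds of the dimension of its ambient space. The key preliminary observation is that, since $r=2n+2-\delta$ by Proposition~\ref{X is QEL and Sec(X) is hypersurface}$(i)$, the numerical hypothesis $r\leq\frac{9}{5}n+2$ is \emph{equivalent} to $n\leq 5\delta$; in particular $\delta\geq\frac{n}{5}>0$, so the QEL machinery for positive secant defect is available.

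First I would analyze the VMRT $Z^{(1)}$. Because $Z$ is a quadratic manifold covered by lines (it is prime Fano), $Z^{(1)}\subseteq\mathbb{P}^{r-1}$ is again a quadratic manifold by Subsection~\ref{subsection: quadratic manifolds}, and by Proposition~\ref{L_z smooth and (when b=2) covered by lines} it is smooth, irreducible and nondegenerate of dimension $n+\delta+1$. A direct computation shows that the condition $n\leq 5\delta$ is exactly equivalent to $\dim(Z^{(1)})=n+\delta+1>\frac{2}{3}(r-1)$. Hence Proposition~\ref{Hartshorne Conj. on quadratic mfds}, applied to $Z^{(1)}\subseteq\mathbb{P}^{r-1}$, shows that $Z^{(1)}$ is a complete intersection.

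Next I would bootstrap this information back up to $Z$. Since $\dim(Z^{(1)})=n+\delta+1\geq\frac{r-1}{2}$ always holds (equivalent to $3\delta+1\geq 0$) and $Z$ is a nondegenerate prime Fano manifold, Proposition~\ref{V^1 c. i. then codim V <= codim V^1} applies and gives $c=\text{codim}_{\mathbb{P}^{N}}(Z)\leq\text{codim}_{\mathbb{P}^{r-1}}(Z^{(1)})=n-2\delta$. Substituting $c\leq n-2\delta$ into $r=2n+2-\delta$ yields $r-2c\geq 2+3\delta>0$, that is $\dim(Z)=r>\frac{2}{3}N$. Applying Proposition~\ref{Hartshorne Conj. on quadratic mfds} once more, this time to $Z\subseteq\mathbb{P}^{N}$ itself, shows that $Z$ is a complete intersection. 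As $Y$ is smooth by hypothesis, Theorem~\ref{introduction: thm. c.i. and Y smooth implies c=1 and deg(Z)=2} then finishes the proof.

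The argument is structurally short, so the only genuine subtlety is conceptual rather than computational: one must apply the complete-intersection criterion \emph{twice}, at two different scales. The first application is to the VMRT, whose dimension relative to its linear span is governed precisely by the hypothesis $n\leq 5\delta$; the second is to $Z$, but this is licensed only after Proposition~\ref{V^1 c. i. then codim V <= codim V^1} transfers the complete-intersection property from $Z^{(1)}$ back to $Z$ in order to bound the codimension $c$. The point I would check most carefully is that the chain of inequalities is genuinely forced by $r\leq\frac{9}{5}n+2$ alone and does not secretly need an independent bound on $c$; the calculations above confirm that the hypothesis is exactly calibrated for the VMRT step, after which the bound $c\leq n-2\delta$ and the final descent to $Z$ follow automatically.
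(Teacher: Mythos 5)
Your proposal is correct and follows essentially the same route as the paper's own proof: translate $r\leq\frac{9}{5}n+2$ into $\delta\geq\frac{n}{5}$ via $r=2n+2-\delta$, apply Proposition~\ref{Hartshorne Conj. on quadratic mfds} to the irreducible quadratic manifold $Z^{(1)}\subseteq\mathbb{P}^{r-1}$, use Proposition~\ref{V^1 c. i. then codim V <= codim V^1} to get $c\leq n-2\delta$ and hence $r>\frac{2}{3}N$, apply the Hartshorne criterion again to $Z$, and conclude by Theorem~\ref{introduction: thm. c.i. and Y smooth implies c=1 and deg(Z)=2}. Your explicit verification of the side condition $\dim(Z^{(1)})\geq\frac{r-1}{2}$ is a detail the paper leaves implicit, but the argument is the same.
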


\begin{proof}
Remark that $\dim(Z^{(1)})=n+\delta+1$ by Proposition \ref{L_z smooth and (when b=2) covered by lines}$(ii)$, and $r=2n+2-\delta$ by Proposition \ref{X is QEL and Sec(X) is hypersurface}$(i)$. Since $r\leq\frac{9}{5}n+2$, we have $\delta\geq\frac{n}{5}$. Hence, $\dim(Z^{(1)})>\frac{2}{3}(r-1)$. Moreover, $Z^{(1)}$ is a (possibly reducible) quadratic manifold (see Subsection \ref{subsection: quadratic manifolds}). By Proposition \ref{L_z smooth and (when b=2) covered by lines}$(i)$, $Z^{(1)}$ is irreducible. Then $Z^{(1)}$ is a complete intersection in $\mathbb{P}^{r-1}$ by Proposition \ref{Hartshorne Conj. on quadratic mfds}. By Proposition \ref{V^1 c. i. then codim V <= codim V^1}, $c\leq r-1-\dim(Z^{(1)})=n-2\delta$. Hence, $r>\frac{2N}{3}$. By Proposition \ref{Hartshorne Conj. on quadratic mfds}, $Z$ is a complete intersection. Then the conclusion follows from Theorem \ref{introduction: thm. c.i. and Y smooth implies c=1 and deg(Z)=2}.
\end{proof}





\section{\normalsize Proof of Proposition \ref{not exist 25-dim. quadric SQEL-mfd with delta=9}} \label{section: Proof of non-existence of certain 25-dim. quadratic QEL-manifolds}

Recall that a QEL-manifold $V\subseteq\mathbb{P}^{r}$ is called a SQEL-manifold, if a general point $u\in\text{Sec}(V)\backslash V$ satisfies that for any point $u'\in C_{u}\backslash V$, we have $\Sigma_{u'}=\Sigma_{u}$. To complete our proof of Theorem \ref{introduction: thm. Z hypersurface cases}, we need to prove Proposition \ref{not exist 25-dim. quadric SQEL-mfd with delta=9}. Our aim in this section is to prove Proposition \ref{not exist 25-dim. quadric SQEL-mfd with delta=9}, which claims the non-existence of nondegenerate $25$-dimensional quadratic SQEL-manifolds in $\mathbb{P}^{43}$ with secant defect $9$.

In Subsection \ref{subsection: general entry loci}, we study the properties of general entry loci on a SQEL-manifold and prove Proposition \ref{not exist 25-dim. quadric SQEL-mfd with delta=9} assuming the following Proposition \ref{not exist 15-dim. quadratic SQEL-mfd s.t. Sec=P^24}. Then we prove Proposition \ref{not exist 15-dim. quadratic SQEL-mfd s.t. Sec=P^24} in Subsection \ref{subsection: Proof of non-existence of certain 15-dim. quadratic QEL-manifolds}, which also requires a detailed study of entry loci on SQEL-manifold.

\begin{prop} \label{not exist 15-dim. quadratic SQEL-mfd s.t. Sec=P^24}
There does not exist any $15$-dimensional quadratic SQEL-manifold $V\subseteq\mathbb{P}^{24}$ such that $\text{Sec}(V)=\mathbb{P}^{24}$.
\end{prop}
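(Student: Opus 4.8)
The plan is to assume such a $V$ exists and to force it, through its variety of minimal rational tangents, to be a hyperplane section of the Cayley plane, which then violates linear normality. First I would record the numerics. From $\dim V=15$ and $\mathrm{Sec}(V)=\mathbb{P}^{24}$ the secant defect is $\delta=2\cdot 15+1-24=7$; since $\delta\ge 3$, Proposition \ref{L^1(V) is QEL and Divisibility Theorem}$(i),(iv)$ shows that $V$ is a prime Fano manifold of index $\frac{15+7}{2}=11$ (coindex $5$) and that $V^{(1)}\subseteq\mathbb{P}^{14}$ is a $9$-dimensional QEL-manifold of secant defect $5$ with $\mathrm{Sec}(V^{(1)})=\mathbb{P}^{14}$. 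Two further observations feed the argument: the hypothesis $\mathrm{Sec}(V)=\mathbb{P}^{24}$ makes $V$ linearly normal (Subsection \ref{subsection: QEL-manifolds}), so that $h^{0}(V,\mathcal{O}_{V}(1))=25$; and, $V$ being a quadratic manifold, $V^{(1)}$ is again a quadratic manifold (Subsection \ref{subsection: quadratic manifolds}).

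Next I would classify the VMRT. Because $\delta(V^{(1)})=5$ satisfies $\frac{9}{2}<5<9$, Proposition \ref{classification of QEL with delta>n/2} applies, and the only $9$-dimensional variety on that list is a nonsingular hyperplane section of the $10$-dimensional spinor variety $S^{10}\subseteq\mathbb{P}^{15}$; hence $V^{(1)}$ is projectively equivalent to such a section. This is the numerical heart of the obstruction: a nonsingular hyperplane section of $S^{10}$ is precisely the general VMRT of a nonsingular hyperplane section $W$ of the Cayley plane $\mathbb{OP}^{2}\subseteq\mathbb{P}^{26}$ (whose own VMRT is $S^{10}$), and $W$ has exactly the invariants $\dim W=15$, $\delta(W)=7$, coindex $5$ forced on $V$ above.

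The decisive step is then to promote the VMRT coincidence to an isomorphism $V\cong W$. Here I would use the SQEL hypothesis through the rigidity it imposes on general entry loci: a general entry locus of $V$ is a smooth $7$-dimensional quadric $Q$ whose linear span $\langle Q\rangle\cong\mathbb{P}^{8}$ meets $V$ in nothing beyond $Q$, and these spans organize $V$ into a quadric fibration matching that of $W$. Reconstructing $V$ from this structure, I expect to conclude that $V$ is projectively a hyperplane section of $\mathbb{OP}^{2}$, hence abstractly isomorphic to $W$ with $\mathcal{O}_{V}(1)$ the ample generator of $\mathrm{Pic}(V)$. Since $h^{0}(\mathbb{OP}^{2},\mathcal{O}(1))=27$ gives $h^{0}(W,\mathcal{O}_{W}(1))=26$, this contradicts $h^{0}(V,\mathcal{O}_{V}(1))=25$ from the first step. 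Equivalently, $W$ is linearly normal only in $\mathbb{P}^{25}$, while any isomorphic projection of $W$ into $\mathbb{P}^{24}$ fails to be a QEL-manifold, because $\mathrm{Sec}(W)$ is a cubic hypersurface and projection then breaks the general entry locus into several components, as recalled in Subsection \ref{subsection: QEL-manifolds}.

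I expect the reconstruction to be the main obstacle. In the proof of Theorem \ref{introduction: thm. Z hypersurface cases} the relevant VMRT was the homogeneous Grassmannian $\mathbb{G}(1,6)$, so Mok's recognition theorem \cite{Mok08} applied verbatim and the linear-normality contradiction was immediate; here, by contrast, every member of the tower $V\to V^{(1)}\to\cdots$ is a non-homogeneous hyperplane section, so no off-the-shelf recognition theorem is available. This is exactly why a detailed analysis of the general entry loci of an SQEL-manifold---controlling how the linear spans of the $\delta$-dimensional quadrics through a general point fit together---is required to recover the global structure of $V$, and it is the part of the argument that must be developed from scratch.
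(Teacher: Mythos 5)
Your numerics and first reduction are correct: $\delta(V)=7$, $V$ is prime Fano of index $11$, and by Proposition \ref{L^1(V) is QEL and Divisibility Theorem}$(i)$ together with Proposition \ref{classification of QEL with delta>n/2}, the VMRT $V^{(1)}\subseteq\mathbb{P}^{14}$ must be a nonsingular hyperplane section of $S^{10}\subseteq\mathbb{P}^{15}$; your closing contradiction (linear normality forces $h^{0}(V,\mathcal{O}_{V}(1))=25$, while a hyperplane section of $\mathbb{OP}^{2}$ needs $26$, and its isomorphic projection to $\mathbb{P}^{24}$ is not QEL) would also be sound \emph{if} $V$ were known to be such a section. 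But the step you yourself call decisive --- promoting the VMRT coincidence to an identification of $V$ with a hyperplane section of the Cayley plane --- is never proved; it is only announced, and you concede that no recognition theorem applies because $V^{(1)}$ is not homogeneous. The appeal to "spans of entry loci organizing $V$ into a quadric fibration matching that of $W$" is not an argument: recovering a Fano manifold from a non-homogeneous VMRT is precisely the kind of open recognition problem that is at least as hard as the proposition itself. So there is a genuine gap, and it sits exactly where the real work lies.

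The paper avoids recognition altogether. It fixes a general entry locus $\Sigma_{u}$ with secant cone $C_{u}\cong\mathbb{P}^{8}$, notes $C_{u}\cap V=\Sigma_{u}$ (Lemma \ref{C_u cap V=Sigma_u, C_(u_1) cap C_(u_2)=Sigma_(u_1) cap Sigma_(u_2) lem.}), and studies the linear projection $\pi$ of $V$ from $C_{u}$: since $V$ is quadratic and SQEL, every positive-dimensional fiber of $\pi$ is a linear space meeting $\Sigma_{u}$ in a hyperplane. Letting $V_{u}$ be the closure of the locus of points joined to $\Sigma_{u}$ by lines in $V$, a dimension count gives $\dim(V_{u})\in\{14,15\}$, and each case is excluded separately. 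When $\dim(V_{u})=15$, a computation with tangential projections shows that two general entry loci of $V$ through a general point $v$ meet along a $2$-plane, so the corresponding entry loci of $V^{(1)}$ meet along a line, contradicting the structure of entry loci on hyperplane sections of $S^{10}$ (Example \ref{entry loci on S^10 e.g.}: nonempty intersections there have dimension $2$ or $3$). When $\dim(V_{u})=14$, the map $\pi$ is birational onto $\mathbb{P}^{15}$; blowing up $\Sigma_{u}$ and applying Propositions \ref{ein-shepherd pic =Z+Z then E irreducible} and \ref{ein-shepherd blowing up}, a canonical-class computation forces the exceptional divisor class to equal $\frac{5}{4}H_{V}-\frac{9}{4}E_{V}$, which is not integral in $\mathrm{Pic}(V')$ --- a contradiction. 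Note that this second configuration is one your outline never contemplates: your strategy implicitly assumes the "Cayley-plane-like" picture from the start, whereas the paper must also rule out a genuinely different birational-geometric possibility.
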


\subsection{\normalsize General entry loci} \label{subsection: general entry loci}

\emph{Throughout this subsection, we assume that $V\subseteq\mathbb{P}^{r}$ is a nondegenerate SQEL-manifold of dimension $n$ such that the secant defect $1\leq\delta(V)< n$.}

Denote by $U_{g}(V)$ the set of points $u\in\text{Sec}(V)\backslash V$ such that the entry locus $\Sigma_{u}$ is an irreducible and smooth quadric hypersurface of dimension $\delta(V)$, and $\Sigma_{u'}=\Sigma_{u}$ for any point $u'\in C_{u}\backslash V$. Let $\mathcal{Q}_{g}(V)=\{\Sigma_{u}\mid u\in U_{g}(V)\}$. Since $V$ is a SQEL-manifold, we know that $U_{g}(V)$ contains a Zariski open dense subset $U^{o}(V)$ of $\text{Sec}(V)\backslash V$ and a general entry locus of $V$ belongs to $\mathcal{Q}_{g}(V)$.

\begin{lem} \label{C_u cap V=Sigma_u, C_(u_1) cap C_(u_2)=Sigma_(u_1) cap Sigma_(u_2) lem.}
Keeping notation as above. Take $u\in U_{g}(V)$. Then $C_{u}\cap V=\Sigma_{u}$. If $u'\in U_{g}(V)$ satisfies $\Sigma_{u'}\neq\Sigma_{u}$, then $\Sigma_{u}\cap\Sigma_{u'}=C_{u}\cap C_{u'}$.
\end{lem}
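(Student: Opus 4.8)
The plan is to reduce both identities to a single structural fact, namely that the secant cone $C_{u}$ of a point $u\in U_{g}(V)$ is exactly the cone with vertex $u$ over its entry locus, together with the defining property of SQEL-manifolds. So the first step I would carry out is to establish the cone description $C_{u}=\bigcup_{p\in\Sigma_{u}}\langle u, p\rangle$. The inclusion $\subseteq$ comes from the definition of the entry locus: every secant line $\ell$ of $V$ through $u$ meets $V$ in (at least) two points, and by definition these points lie in $\Sigma_{u}$, so $\ell=\langle u, p\rangle$ for some $p\in\Sigma_{u}$; passing to closures and using that the cone over the closed set $\Sigma_{u}$ is itself closed yields the inclusion. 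The reverse inclusion holds because for a general $p\in\Sigma_{u}$ the line $\langle u, p\rangle$ is a genuine secant through $u$ and hence lies in $C_{u}$, and one concludes by taking closures.

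Given this, the first assertion is almost immediate. The inclusion $\Sigma_{u}\subseteq C_{u}\cap V$ is clear, since $\Sigma_{u}\subseteq V$ and a general point of $\Sigma_{u}$ lies on a secant through $u$ (with $C_{u}$ closed). For the reverse inclusion, take $w\in C_{u}\cap V$; by the cone description $w\in\langle u, p\rangle$ for some $p\in\Sigma_{u}\subseteq V$. If $w=p$ then $w\in\Sigma_{u}$; otherwise the line $\langle u, p\rangle=\langle u, w\rangle$ contains the two distinct points $w, p$ of $V$, so it is a secant line of $V$ through $u$, and $w\in\Sigma_{u}$ by definition of the entry locus. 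Thus $C_{u}\cap V=\Sigma_{u}$, and symmetrically $C_{u'}\cap V=\Sigma_{u'}$.

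For the second assertion, the inclusion $\Sigma_{u}\cap\Sigma_{u'}\subseteq C_{u}\cap C_{u'}$ follows at once from $\Sigma_{u}\subseteq C_{u}$ and $\Sigma_{u'}\subseteq C_{u'}$. The essential point is the reverse inclusion, for which it suffices to prove $C_{u}\cap C_{u'}\subseteq V$: once this is known, the first assertion gives $C_{u}\cap C_{u'}\subseteq(C_{u}\cap V)\cap(C_{u'}\cap V)=\Sigma_{u}\cap\Sigma_{u'}$. To prove $C_{u}\cap C_{u'}\subseteq V$ I would argue by contradiction, and this is where the SQEL hypothesis enters. Suppose $x\in C_{u}\cap C_{u'}$ with $x\notin V$; then $x\in\text{Sec}(V)\setminus V$ and $x\in C_{u}\setminus V$, so since $u\in U_{g}(V)$ the defining property of $U_{g}(V)$ forces $\Sigma_{x}=\Sigma_{u}$; likewise $x\in C_{u'}\setminus V$ and $u'\in U_{g}(V)$ give $\Sigma_{x}=\Sigma_{u'}$. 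Hence $\Sigma_{u}=\Sigma_{u'}$, contradicting the hypothesis $\Sigma_{u}\neq\Sigma_{u'}$. Therefore $C_{u}\cap C_{u'}\subseteq V$, which finishes the argument.

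The main obstacle I expect is the careful treatment of closures and of limiting (tangent) secant lines in the cone description $C_{u}=\bigcup_{p\in\Sigma_{u}}\langle u, p\rangle$; this is the place where one must make sure that no stray point of $C_{u}\cap V$ escapes $\Sigma_{u}$. Once the cone description is secured, everything else is formal, with the SQEL property being used only in the second assertion, precisely to forbid any intersection point of $C_{u}$ and $C_{u'}$ from lying off $V$.
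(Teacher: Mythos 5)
Your proof is correct, but your argument for the first identity is genuinely different from the paper's. The paper proves $C_{u}\cap V=\Sigma_{u}$ by contradiction using the full strength of $u\in U_{g}(V)$: since $\Sigma_{u}$ is a quadric hypersurface, $C_{u}$ is the linear space $\mathbb{P}^{\delta(V)+1}$, so the cone over $\Sigma_{u}$ with vertex a hypothetical point $v\in(C_{u}\cap V)\setminus\Sigma_{u}$ fills up all of $C_{u}$; choosing a line $l=\langle v,v'\rangle$ with $v'\in\Sigma_{u}$ not contained in $V$ and a point $u_{1}\in l\setminus V$, the SQEL property gives $v\in\Sigma_{u_{1}}=\Sigma_{u}$, a contradiction. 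You instead use the cone description with vertex $u$, namely $C_{u}=\bigcup_{p\in\Sigma_{u}}\langle u,p\rangle$, and conclude directly from the definition of the entry locus: a point $w\in C_{u}\cap V$ lying on $\langle u,p\rangle$ either equals $p$ or turns that line into an honest secant of $V$ through $u$ and $w$, so $w\in\Sigma_{u}$ either way. This is more elementary and strictly more general: it uses neither the quadric structure of $\Sigma_{u}$ nor the SQEL hypothesis, so it actually shows the first identity holds for every $u\in\text{Sec}(V)\setminus V$ on any projective variety (with the paper's definitions of $C_{u}$ and $\Sigma_{u}$ as closures of honest-secant objects), and it makes transparent that the SQEL property enters only in the second identity. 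For that second identity your argument --- reduce to showing $C_{u}\cap C_{u'}\subseteq V$, and exclude a point $x\in(C_{u}\cap C_{u'})\setminus V$ via $\Sigma_{u}=\Sigma_{x}=\Sigma_{u'}$ --- coincides with the paper's. The closure/limit-line bookkeeping you flagged is indeed the only delicate point in your route, and your join argument (the cone with vertex $u\notin V$ over the closed set $\Sigma_{u}$ is closed, and the join commutes with taking closures) settles it correctly.
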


\begin{proof}
Remark that $\Sigma_{u}$ is a quadric hypersurface contained in $C_{u}=\mathbb{P}^{\delta(V)+1}$. Assume $C_{u}\cap V\neq\Sigma_{u}$. Then take $v\in (C_{u}\cap V)\backslash\Sigma_{u}$. There is a secant line $l$ of $\Sigma_{u}$ passing through $v$ not contained in $V$, since $C_{u}=\bigcup\limits_{v'\in\Sigma_{u}}\langle v, v'\rangle\nsubseteq V$. Take $u_{1}\in l\backslash V$. Then $v\in\Sigma_{u_{1}}=\Sigma_{u}$, where the equality follows from the fact $u\in U_{g}(V)$. This contradicts the choice of $v$. Hence, $C_{u}\cap V=\Sigma_{u}$.

Now assume $u'\in U_{g}(V)$ and $\Sigma_{u'}\neq\Sigma_{u}$. Then $C_{u}\cap C_{u'}\subseteq V$, since otherwise the existence of $u_{2}\in (C_{u}\cap C_{u'})\backslash V$ implies $\Sigma_{u}=\Sigma_{u_{2}}=\Sigma_{u'}$, which contradicts the choice of $\Sigma_{u}$ and $\Sigma_{u'}$. Hence, $C_{u}\cap C_{u'}=(C_{u}\cap V)\cap (C_{u'}\cap V)=\Sigma_{u}\cap\Sigma_{u'}$.
\end{proof}

\begin{e.g.} \label{entry loci on S^10 e.g.}
We consider the 10-dimensional Spinor variety $S^{10}\subseteq\mathbb{P}^{15}$. Note that the secant variety $\text{Sec}(S^{10})=\mathbb{P}^{15}$. It is known that $(a)$ each entry locus of $S^{10}\subseteq\mathbb{P}^{15}$ is a smooth connected quadric hypersurface of dimension $6$, and $(b)$ the intersection of any two different entry loci of $S^{10}\subseteq\mathbb{P}^{15}$ is either empty or a linear subspace of dimension $3$. By $(a)$, $U_{g}(S^{10})=\mathbb{P}^{15}\backslash S^{10}$.  Let $M$ be the section of $S^{10}$ by an arbitrary hyperplane $H$ in $\mathbb{P}^{15}$. Then by $(a)(b)$, any entry locus of $M$ is the intersection of $H$ and an entry locus of $S^{10}$, and the intersection of two different entry loci of $M$ is either empty or a linear subspace of dimension $2$ or $3$. For more details of this example, one can see \cite[Lem. 5.11, Prop. 5.12, Cor. 5.13]{FH14}.
\end{e.g.}

\begin{lem} \label{V^1 is a SQEL-mfd lem.}
Assume $\delta(V)\geq 3$. Take a general point $v\in V$. Then $\mathcal{L}_{v}(V)$ is a SQEL-manifold.
\end{lem}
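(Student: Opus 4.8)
We must show that for a nondegenerate SQEL-manifold $V\subseteq\mathbb{P}^{r}$ with $3\leq\delta(V)<n$, the VMRT $\mathcal{L}_{v}(V)$ at a general point $v$ is again a SQEL-manifold. The plan is to transport the SQEL structure of $V$ to $\mathcal{L}_{v}(V)$ via the tangent projection, exploiting that by Proposition \ref{L^1(V) is QEL and Divisibility Theorem}$(i)$ we already know $V^{(1)}=\mathcal{L}_{v}(V)\subseteq\mathbb{P}^{n-1}$ is a QEL-manifold of dimension $\frac{n+\delta}{2}-2$ with secant defect $\delta-2$ and $\mathrm{Sec}(V^{(1)})=\mathbb{P}^{n-1}$. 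Thus only the \emph{special} condition remains to be verified: for a general $u\in\mathrm{Sec}(V^{(1)})\setminus V^{(1)}$ and every $u'\in C_{u}(V^{(1)})\setminus V^{(1)}$, one has $\Sigma_{u'}(V^{(1)})=\Sigma_{u}(V^{(1)})$.

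**Approach.** First I would recall the geometric description of the VMRT as a tangent-type projection. Through the general point $v\in V$ there passes the family of quadrics $\Sigma_{w}$ (for $w\in U_{g}(V)$ with $v\in\Sigma_{w}$), and by the SQEL property these quadrics are rigidly attached to their secant cones $C_{w}=\mathbb{P}^{\delta+1}$; by Lemma \ref{C_u cap V=Sigma_u, C_(u_1) cap C_(u_2)=Sigma_(u_1) cap Sigma_(u_2) lem.} we have $C_{w}\cap V=\Sigma_{w}$ and distinct such quadrics meet only along $C_{w}\cap C_{w'}\subseteq V$. The key move is to understand how these quadrics through $v$ descend under the projection $\pi_{v}$ from $\mathbb{T}_{v}V$ (or the associated second-fundamental-form map) whose image records the lines through $v$, i.e. produces $\mathcal{L}_{v}(V)\subseteq\mathbb{P}(T_vV^{*})\cong\mathbb{P}^{n-1}$. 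Each quadric $\Sigma_{w}\ni v$ is a $\delta$-dimensional smooth quadric, and its tangent directions at $v$ sweep out a $(\delta-1)$-dimensional smooth quadric inside $\mathcal{L}_{v}(V)$. I would argue that these descended quadrics are precisely the entry loci of the QEL-manifold $V^{(1)}$, using that $V^{(1)}$ has secant defect $\delta-2$ so its entry loci have the correct dimension $\delta-2=(\delta-1)-1$; this matches the dimension of the quadric cut out on $\mathcal{L}_{v}(V)$ by a $\Sigma_w$.

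**Verifying the special condition.** Having identified a general entry locus of $V^{(1)}$ with the image of some $\Sigma_{w}$ through $v$, the special property for $V^{(1)}$ should follow by pulling back the incidence relations along $\pi_v$. Concretely, for a general point $u\in\mathrm{Sec}(V^{(1)})\setminus V^{(1)}$, the secant cone $C_u(V^{(1)})$ and the entry locus $\Sigma_u(V^{(1)})$ lift to the secant cone and entry quadric of the corresponding $w$ on $V$ (restricted to directions at $v$); a point $u'\in C_u(V^{(1)})\setminus V^{(1)}$ lifts to a point $u''\in C_w(V)\setminus V$, to which the SQEL property of $V$ applies, giving $\Sigma_{u''}(V)=\Sigma_w(V)$, and pushing forward yields $\Sigma_{u'}(V^{(1)})=\Sigma_u(V^{(1)})$. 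The genericity of $v$ ensures $V^{(1)}$ is smooth and irreducible and that the chosen $u$ ranges over a dense subset, so the condition holds for a general $u$ as required.

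**Main obstacle.** The delicate point I expect to be hardest is making the correspondence ``quadrics through $v$ on $V$'' $\leftrightarrow$ ``entry loci of $V^{(1)}$'' precise and compatible with secant cones, rather than merely matching dimensions. One must check that the projection $\pi_v$ sends the full secant-cone structure $C_w\cap V=\Sigma_w$ faithfully to the secant-cone structure of $V^{(1)}$, so that the intersection patterns of Lemma \ref{C_u cap V=Sigma_u, C_(u_1) cap C_(u_2)=Sigma_(u_1) cap Sigma_(u_2) lem.} are preserved; this is where the behaviour of secant lines under the differential of $\pi_v$ and the smoothness of the $\Sigma_w$ at $v$ enter, and it will likely require the explicit quadro-quadric/VMRT yoga already used in \cite{Rus09} and the tangential analysis behind Proposition \ref{L^1(V) is QEL and Divisibility Theorem}. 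Once this identification is established, the special condition transfers essentially formally.
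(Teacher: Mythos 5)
Your overall strategy---transporting the SQEL structure of $V$ to $V^{(1)}$ through the family of entry-locus quadrics of $V$ passing through $v$---is the same as the paper's, but the engine that is supposed to drive the transfer is both misidentified and left unbuilt, and this is a genuine gap rather than a routine verification. The map you propose does not do what you want: the projection from $\mathbb{T}_{v}V$ is the \emph{tangential} projection, whose general fibers are entry loci (Proposition \ref{fiber of tangential projection}) and whose image has dimension $n-\delta(V)$; it does not ``record the lines through $v$''---those lines lie inside the center $\mathbb{T}_{v}V$ and are contracted away, so $\mathcal{L}_{v}(V)$ is not its image. The device that actually works, and that the paper uses, is to cut the cone of lines through $v$ by a \emph{general hyperplane} $H\subseteq\mathbb{T}_{v}V$ with $v\notin H$, thereby realizing $V^{(1)}=\mathcal{L}_{v}(V)$ as a subvariety of $H\subseteq\mathbb{P}^{r}$. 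With this realization every object attached to $V^{(1)}$ is literally a subset of the corresponding object of $V$: $\text{Sec}(V^{(1)})=H\subseteq\mathbb{T}_{v}V\subseteq\text{Sec}(V)$, secant lines of $V^{(1)}$ are secant lines of $V$, and $C_{u}(V^{(1)})\subseteq C_{u}(V)\cap H$. No lifting or pushing forward along any projection is needed, and it is precisely this nesting that makes the transfer ``essentially formal''; without it, the compatibility of secant-cone structures that you flag as the main obstacle remains unproved.

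Even granting the correct realization, the two steps you defer are exactly where the content lies. First, density: to conclude that $V^{(1)}$ is SQEL one needs $U_{g}(V^{(1)})$ to contain a dense subset of $\text{Sec}(V^{(1)})=H$, hence one needs a general point of $H$ to lie in $U_{g}(V)$; the paper obtains this from Zak's theorem that, since $\delta(V)>0$, $\text{Sec}(V)$ is the closure of the union of embedded tangent spaces, so $U^{o}(V)\cap\mathbb{T}_{v}V$ is dense in $\mathbb{T}_{v}V$ for general $v$. Your appeal to ``genericity of $v$'' does not supply this. Second, the crucial point in the transfer: for $u'\in C_{u}(V^{(1)})\setminus V^{(1)}$ one must show $u'\notin V$---not merely $u'\notin V^{(1)}$---before the SQEL property of $V$ can be applied at $u'$; your phrase ``lifts to a point $u''\in C_{w}(V)\setminus V$'' assumes this silently. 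The paper proves it via Lemma \ref{C_u cap V=Sigma_u, C_(u_1) cap C_(u_2)=Sigma_(u_1) cap Sigma_(u_2) lem.}, through the chain $V\cap C_{u}(V^{(1)})\subseteq V\cap C_{u}(V)\cap H=\Sigma_{u}(V)\cap H=\Sigma_{u}(V^{(1)})\subseteq V^{(1)}$. Relatedly, the identification $\Sigma_{u}(V^{(1)})=\Sigma_{u}(V)\cap H=\mathcal{L}_{v}(\Sigma_{u}(V))$ is established in the paper by a two-sided inclusion (using that $u$ lies in the span $C_{u}(V)\cap H$ of the quadric $\Sigma_{u}(V)\cap H\subseteq V^{(1)}$), not by a dimension count; and, incidentally, the quadric of lines through $v$ in the $\delta$-dimensional quadric $\Sigma_{w}$ has dimension $\delta-2$, not $\delta-1$ as you wrote, so even the dimension match you invoke is off by one as stated.
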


\begin{proof}
Since $\delta(V)>0$, $\text{Sec}(V)$ is the closure of the union of embedded tangent spaces (see for example \cite[Thm. 1.4]{Zak93}). Hence, for the general point $v\in V$, $U^{o}(V)\cap\mathbb{T}_{v}V$ is an open dense subset in $\mathbb{T}_{v}V$. Take a general hyperplane $H$ of $\mathbb{T}_{v}V$. Then $U_{g}(V)\cap H\supseteq U^{o}(V)\cap H\neq\emptyset$ and $v\notin H$. We can identify $H$ with $\mathbb{P}((T_{v}V)^{*})$. Set $V^{(1)}=\mathcal{L}_{v}(V)$. By Proposition \ref{L^1(V) is QEL and Divisibility Theorem} $(i)$, $V^{(1)}\subseteq H$ is a QEL-manifold with secant defect $\delta(V)-2>0$ and $\text{Sec}(V^{(1)})=H$.

We claim that $U_{g}(V)\cap H\subseteq U_{g}(V^{(1)})$. If the claim holds, then $U_{g}(V^{(1)})$ contains an open dense subset $U^{o}(V)\cap H$ of $H$.  As a consequence $V^{(1)}$ is then a SQEL-manifold.

Now we turn to the proof the claim. Recall that $S_{V}$ is defined to be the closure of the set of triples $(v_{1}, v_{2}, u)$ in $V\times V\times\mathbb{P}^{r}$ such that $v_{1}\neq v_{2}$ and $u\in\langle v_{1}, v_{2}\rangle$. Let $p_{i}$ be the restriction to $S_{V}$ of the $i$-th projection from $V\times V\times\mathbb{P}^{r}$. Then $p_{3}(p_{1}^{-1}(v))$ is the joint variety of $v$ and $V$, i.e. it is the closure of the union of lines $\langle v, v_{1}\rangle$ for $v_{1}\in V\backslash\{v\}$. In particular, $\mathbb{T}_{v}V\subseteq p_{3}(p_{1}^{-1}(v))$. So for any $u_{0}\in\mathbb{T}_{v}V$, we have $v\in p_{1}(p_{3}^{-1}(u_{0}))$. Take $u\in U_{g}(V)\cap H\subseteq\mathbb{T}_{v}V$. Then $\Sigma_{u}(V)=p_{1}(p_{3}^{-1}(u))$ is an irreducible smooth $\delta(V)$-dimensional quadric hypersurface passing through $v$. Thus, $\Sigma_{u}(V)\cap H=\mathcal{L}_{v}(\Sigma_{u}(V))$ is an irreducible smooth $(\delta(V)-2)$-dimensional quadric hypersurface contained in $V^{(1)}$, which implies that $\Sigma_{u}(V)\cap H=\Sigma_{u}(V)\cap V^{(1)}$. On the other hand, $u\notin V^{(1)}$, since otherwise $u\in U_{g}(V)\cap V^{(1)}\subseteq U_{g}(V)\cap V=\emptyset$. By the definition of entry loci,
$$\Sigma_{u}(V^{(1)})\subseteq \Sigma_{u}(V)\cap V^{(1)}=\Sigma_{u}(V)\cap H.$$
Thus, $u\in C_{u}(V^{(1)})\subseteq C_{u}(V)\cap H$. Since $\Sigma_{u}(V)\cap V^{(1)}=\Sigma_{u}(V)\cap H$ is a quadric hypersurface in $C_{u}(V)\cap H$, we get that $\Sigma_{u}(V)\cap H\subseteq\Sigma_{u}(V^{(1)})$. Thus, $\Sigma_{u}(V^{(1)})=\Sigma_{u}(V)\cap H$ is an irreducible smooth $(\delta(V)-2)$-dimensional quadric hypersurface.

Now take any point $u'\in C_{u}(V^{(1)})\backslash V^{(1)}$, then $u'\in C_{u}(V)\cap H$. Moreover, $u'\notin V$, since otherwise
$$u'\in V\cap C_{u}(V^{(1)})\subseteq V\cap C_{u}(V)\cap H=\Sigma_{u}(V)\cap H=\Sigma_{u}(V^{(1)})\subseteq V^{(1)},$$
where the first equality follows from Lemma \ref{C_u cap V=Sigma_u, C_(u_1) cap C_(u_2)=Sigma_(u_1) cap Sigma_(u_2) lem.}. Since $u'\in C_{u}(V)\backslash V$ and $u\in U_{g}(V)$, we get that $\Sigma_{u'}(V)=\Sigma_{u}(V)$ and $C_{u'}(V)=C_{u}(V)$. This implies that $u'\in U_{g}(V)$. Hence, $u'\in U_{g}(V)\cap H$ and $$\Sigma_{u}(V^{(1)})=\Sigma_{u}(V)\cap H=\Sigma_{u'}(V)\cap H=\Sigma_{u'}(V^{(1)}),$$
where the third equality follows from the same argument as the first one (see the discussion in the last paragraph). Thus, $u, u'\in U_{g}(V^{(1)})$ and $U_{g}(V)\cap H \subseteq U_{g}(V^{(1)})$. So the claim holds.
\end{proof}

Now we can prove Proposition \ref{not exist 25-dim. quadric SQEL-mfd with delta=9}, assuming that Proposition \ref{not exist 15-dim. quadratic SQEL-mfd s.t. Sec=P^24} holds.

\begin{proof}[Proof of Proposition \ref{not exist 25-dim. quadric SQEL-mfd with delta=9}]
By Proposition \ref{L^1(V) is QEL and Divisibility Theorem}$(i)$, $V^{(1)}\subseteq\mathbb{P}^{24}$ is a QEL-manifold of dimension $15$ with secant defect $7$ and $\text{Sec}(V^{(1)})=\mathbb{P}^{24}$. Moreover, $V^{(1)}$ is a quadratic manifold (see Subsection \ref{subsection: quadratic manifolds}), and a SQEL-manifold by Lemma \ref{V^1 is a SQEL-mfd lem.}. However, by Proposition \ref{not exist 15-dim. quadratic SQEL-mfd s.t. Sec=P^24}, such a quadratic SQEL-manifold as $V^{(1)}$ does not exist. The conclusion follows.
\end{proof}

\subsection{\normalsize Proof of Proposition \ref{not exist 15-dim. quadratic SQEL-mfd s.t. Sec=P^24}} \label{subsection: Proof of non-existence of certain 15-dim. quadratic QEL-manifolds}

Let $V\subseteq\mathbb{P}^{r}$ be a smooth projective variety. Take a general point $v\in V$. Denote by $\pi_{v}: V\dashrightarrow V'$ the restriction to $V$ of the linear projection from $\mathbb{T}_{v}(V)$. We call the rational map $\pi_{v}$ the \textit{tangential projection} at $v$. Let $\pi: V\dashrightarrow V'$ be a rational map. Take a point $v'\in V'$. For the convenience of discussion, we use $\pi^{-1}(v')$ to denote the closure of the fiber of $v'$.

To prove Proposition \ref{not exist 15-dim. quadratic SQEL-mfd s.t. Sec=P^24}, we need to recall some properties on tangential projections and birational maps.

\begin{prop} (\cite[Thm. 2.3]{IR08}) \label{fiber of tangential projection}
Let $V\subseteq\mathbb{P}^{r}$ be a QEL-manifold with secant defect $\delta(V)>0$. Take a general point $v\in V$. Denote by $\pi_{v}: V\dashrightarrow V'$ the tangential projection at $v$. Then for a general point $y\in V$, $\pi_{v}^{-1}(\pi_{v}(y))$ is the entry locus of a general point $p\in\langle v, y\rangle$, i.e. a smooth quadric hypersurface.
\end{prop}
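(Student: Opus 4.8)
The plan is to fix a general $v\in V$ and a general $y\in V$, let $p\in\langle v,y\rangle$ be a general point, and identify the entry locus $\Sigma_p$ with the unique $\delta(V)$-dimensional smooth quadric hypersurface $Q\subseteq V$ passing through the two general points $v$ and $y$ (its existence and uniqueness being the basic structural fact on QEL-manifolds recalled in Subsection \ref{subsection: QEL-manifolds}). Since the secant line $\langle v,y\rangle$ meets $Q$ in the two points $v,y$, a general $p$ on it lies on a secant of $Q$, so $\Sigma_p=Q$ and in particular $v,y\in\Sigma_p$. The goal then splits into the two inclusions $\Sigma_p\subseteq\pi_v^{-1}(\pi_v(y))$ and $\pi_v^{-1}(\pi_v(y))\subseteq\Sigma_p$.

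For the forward inclusion I would run a purely linear-algebraic contraction argument on the span $\langle\Sigma_p\rangle=\mathbb{P}^{\delta(V)+1}$. Since $\Sigma_p\subseteq V$, its embedded tangent space $\mathbb{T}_v\Sigma_p$ is contained in $\mathbb{T}_vV$ and is a hyperplane of $\langle\Sigma_p\rangle$. Because $y\in\Sigma_p$ is a general point of $V$ it does not lie on $\mathbb{T}_vV$, so $\langle\Sigma_p\rangle\not\subseteq\mathbb{T}_vV$ and hence $\mathbb{T}_vV\cap\langle\Sigma_p\rangle=\mathbb{T}_v\Sigma_p$ exactly. For any two points $x_1,x_2\in\langle\Sigma_p\rangle$ the line $\langle x_1,x_2\rangle$ lies in $\mathbb{P}^{\delta(V)+1}$ and therefore meets the hyperplane $\mathbb{T}_v\Sigma_p$, which means $\pi_v(x_1)=\pi_v(x_2)$. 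Thus the tangential projection collapses all of $\langle\Sigma_p\rangle$ (hence $\Sigma_p$) to the single point $\pi_v(y)$, giving $\Sigma_p\subseteq\pi_v^{-1}(\pi_v(y))$.

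The reverse inclusion is where the real content lies, and I expect it to be the main obstacle. Take a general $y'\in\pi_v^{-1}(\pi_v(y))$; then $\pi_v(y')=\pi_v(y)$ forces the line $\langle y,y'\rangle$ to meet $\mathbb{T}_vV$ at some point $q$, and $q\in\mathrm{Sec}(V)$ as it lies on a secant of $V$. The key step is to show $v\in\Sigma_q$: this is exactly the phenomenon used in the proof of Lemma \ref{V^1 is a SQEL-mfd lem.}, namely that $\mathbb{T}_vV\subseteq p_3(p_1^{-1}(v))$, so every point $q\in\mathbb{T}_vV$ satisfies $v\in\widetilde\Sigma_q$. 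Granting this, $\Sigma_q$ is a $\delta(V)$-dimensional quadric in $V$ containing the general points $v,y,y'$, and by the uniqueness of the $\delta(V)$-quadric through two general points we get $\Sigma_q=Q=\Sigma_p$, whence $y'\in\Sigma_p$; together with the first inclusion this forces $\pi_v^{-1}(\pi_v(y))=\Sigma_p$, a smooth quadric hypersurface. The delicate points to control are genericity: one must ensure $q$ is chosen general enough in $\mathrm{Sec}(V)\setminus V$ that $\Sigma_q$ is an irreducible smooth $\delta(V)$-dimensional quadric (so that $v\in\widetilde\Sigma_q$ really gives $v\in\Sigma_q$ and uniqueness applies), and that the fiber does not acquire a spurious component off $\Sigma_p$. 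I would handle this by restricting to the open dense set of good secant points and dualizing the dimension count ($\dim V'=n-\delta(V)$, which follows once each $\Sigma_p$ is contracted and distinct quadrics through $v$ have distinct images), so that the general fiber is irreducible of dimension exactly $\delta(V)$ and therefore coincides with $\Sigma_p$.
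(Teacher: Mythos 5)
This proposition is not proved in the paper at all: it is quoted from \cite[Thm.\ 2.3]{IR08}, so there is no internal argument to compare yours against, and I have assessed your proposal on its own terms. Your forward inclusion is correct and complete: since $y\notin\mathbb{T}_vV$, the span $\langle\Sigma_p\rangle$ meets $\mathbb{T}_vV$ exactly in the hyperplane $\mathbb{T}_v\Sigma_p$, every line of $\langle\Sigma_p\rangle$ meets that hyperplane, and so $\pi_v$ contracts $\Sigma_p$ to $\pi_v(y)$.

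The reverse inclusion, however, contains a genuine gap, exactly at the point you flag, and your proposed repair does not close it. The point $q=\langle y,y'\rangle\cap\mathbb{T}_vV$ is \emph{determined} by $y'$; you have no freedom to make it general. There are two layers of trouble. First, even a general point of $\mathbb{T}_vV$ is not a general point of $\text{Sec}(V)$, since $\mathbb{T}_vV$ is a small subvariety of $\text{Sec}(V)$; to know the good locus (where $\widetilde{\Sigma}_u=\Sigma_u$ is a smooth $\delta(V)$-dimensional quadric) even meets $\mathbb{T}_vV$ one needs the argument the paper runs inside Lemma \ref{V^1 is a SQEL-mfd lem.}: $\delta(V)>0$ forces $\text{Sec}(V)$ to be swept out by embedded tangent spaces, so for \emph{general} $v$ the intersection of $\mathbb{T}_vV$ with the good locus is dense in $\mathbb{T}_vV$. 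Second, and fatally, $q$ does not range over all of $\mathbb{T}_vV$: if the fiber had a component $C$ other than $\Sigma_p$, then as $y'$ varies in $C$ the point $q$ only sweeps the projection of $C$ from $y$, a set of dimension at most $\delta(V)$, and no dimension count prevents this set from lying entirely inside the bad locus, which can have dimension as large as $n-1$ inside $\mathbb{T}_vV$. Note also that points $y'$ of such a component are \emph{not} general points of $V$ (they are constrained to a fixed fiber), so you cannot even invoke the existence or uniqueness of a $\delta(V)$-dimensional quadric through $v$ and $y'$.

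Your fallback, ``the general fiber is irreducible of dimension exactly $\delta(V)$, hence coincides with $\Sigma_p$,'' is not a deduction but a restatement of the theorem. The equality $\dim(V')=n-\delta(V)$ does hold (it follows from Terracini's lemma; your route to it, via ``distinct quadrics through $v$ have distinct images,'' is circular, since that assertion is equivalent to what is being proved), and it gives that the general fiber is pure of dimension $\delta(V)$ with $\Sigma_p$ as one irreducible component. But purity plus irreducibility of the graph does not force irreducibility of the general fiber: an irreducible total space can have connected but reducible general fibers whose components are permuted by monodromy and meet at a point, as in the family $\{Y^{2}=tX^{2}\}\subseteq\mathbb{P}^{2}\times\mathbb{A}^{1}$ over $\mathbb{A}^{1}$; here all components of the fiber would be quadrics through $v$, so connectedness through $v$ does not help either. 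Ruling out these extra components is precisely the content of Ionescu--Russo's theorem, and it is the one idea your proposal is missing.
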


\begin{prop} (\cite[Prop. 1.3]{ES89}) \label{ein-shepherd pic =Z+Z then E irreducible}
Let $f: V'\dashrightarrow V$ be a proper birational map between two smooth projective varieties. Let $M=\{v\in V|\dim(f^{-1}(v))\geq 1\}$ and $E=f^{-1}(M)$. Suppose that $\text{Pic}(V')=\mathbb{Z}\oplus\mathbb{Z}$. Then $E$ is irreducible.
\end{prop}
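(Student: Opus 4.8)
The plan is to show that the number of irreducible components of $E$ is governed by the difference of Picard numbers $\rho(V')-\rho(V)$, and then to use the hypothesis $\text{Pic}(V')=\mathbb{Z}\oplus\mathbb{Z}$ together with $\rho(V)\geq 1$ to force this number to be at most one. The crucial first move is to recognize $E$ as the exceptional divisor of a birational morphism, after which the desired bound becomes a clean statement about Picard ranks.

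First I would reduce to the situation where $f$ is a birational morphism and identify $E$ with its exceptional locus $\text{Exc}(f)$. Since $V$ is smooth, hence normal, the purity theorem guarantees that $\text{Exc}(f)$ is pure of codimension one in $V'$; write $\text{Exc}(f)=\bigcup_{i=1}^{s}E_{i}$ for its prime components. Because $f$ is birational with $V$ normal, Zariski's Main Theorem gives that every fibre $f^{-1}(v)$ is connected, so a positive-dimensional fibre (one over a point $v\in M$) cannot contain a point at which $f$ is a local isomorphism, and therefore lies entirely inside $\text{Exc}(f)$. This yields $E=f^{-1}(M)=\text{Exc}(f)=\bigcup_{i=1}^{s}E_{i}$, so it suffices to prove $s\leq 1$.

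Next I would establish the additivity of Picard numbers $\rho(V')=\rho(V)+s$. The pullback $f^{*}\colon\text{Pic}(V)\to\text{Pic}(V')$ is injective, being split by $f_{*}$ via $f_{*}\mathcal{O}_{V'}=\mathcal{O}_{V}$, and each class $[E_{i}]$ satisfies $f_{*}[E_{i}]=0$ since $f(E_{i})$ has codimension $\geq 2$ in $V$; hence no nontrivial rational combination $\sum a_{i}[E_{i}]$ can lie in $f^{*}\text{Pic}(V)_{\mathbb{Q}}$. A standard argument (reducing to a general surface section and invoking the independence of exceptional divisor classes) then shows the $[E_{i}]$ are linearly independent modulo $f^{*}\text{Pic}(V)$, while the isomorphism $V'\setminus\text{Exc}(f)\cong V\setminus M$ over the codimension-$\geq 2$ locus $M$ shows that $f^{*}\text{Pic}(V)$ together with the $[E_{i}]$ span $\text{Pic}(V')_{\mathbb{Q}}$. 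This gives $\rho(V')=\rho(V)+s$.

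Finally, the hypothesis $\text{Pic}(V')=\mathbb{Z}\oplus\mathbb{Z}$ gives $\rho(V')=2$, while $\rho(V)\geq 1$ since $V$ is projective; hence $s=\rho(V')-\rho(V)\leq 1$. As $E\neq\emptyset$ (otherwise there is nothing to prove), we conclude $s=1$, i.e.\ $E$ is irreducible. I expect the main obstacle to be precisely the reduction step: verifying that $E$ really coincides with the pure-codimension-one exceptional divisor of a birational morphism, which requires disposing of the indeterminacy locus when $f$ is only a rational map and combining the purity theorem with connectedness of fibres; once $E$ is displayed as a union of finitely many prime exceptional divisors whose classes are independent modulo $f^{*}\text{Pic}(V)$, the Picard-rank count is immediate.
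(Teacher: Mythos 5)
The paper gives no proof of this proposition---it is quoted directly from \cite{ES89}---so the only meaningful comparison is with the original argument there, and yours is essentially that argument: the components $E_{1},\ldots,E_{s}$ of the exceptional locus are divisors by purity over the locally factorial $V$, their classes are linearly independent modulo the (split) injection $f^{*}\colon\mathrm{Pic}(V)\hookrightarrow\mathrm{Pic}(V')$ and generate the cokernel because $\mathrm{codim}_{V}(M)\geq 2$, whence $2=\mathrm{rank}\,\mathrm{Pic}(V')=\mathrm{rank}\,\mathrm{Pic}(V)+s\geq 1+s$ and $s=1$. Your write-up is correct as it stands; the only points worth tightening are that purity of the exceptional locus needs local factoriality of $V$ (which smoothness supplies), not mere normality as your parenthetical suggests, and that the rational-map-to-morphism reduction you single out as the main obstacle never actually arises: in the paper's sole application the map is the birational morphism $g\colon V'\rightarrow\mathbb{P}^{15}$ resolving $\pi$, so the dashed arrow in the statement should simply be read as a proper birational morphism, exactly the setting in which you carry out the Picard-rank count.
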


\begin{prop} (\cite[Thm. 1.1]{ES89}) \label{ein-shepherd blowing up}
Let $f: V'\rightarrow V$ be a proper birational morphism between two smooth varieties. Let $M=\{v\in V|\dim(f^{-1}(v))\geq 1\}$, $E=f^{-1}(M)$ and $E_{1}=(E)_{\text{red}}$. Assume that $M$ is smooth and $E_{1}$ is an irreducible divisor. Then $E=E_{1}$ and $V'$ is the blow up of $V$ along $M$.
\end{prop}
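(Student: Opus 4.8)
The plan is to identify $f$ with the blow-up by combining Zariski's Main Theorem, the universal property of blowing up, and purity of the exceptional locus. First I would record the basic geometry coming from Zariski's Main Theorem: since $f$ is proper and birational and $V$ is smooth (hence normal), one has $f_{*}\mathcal{O}_{V'}=\mathcal{O}_{V}$ and all fibers of $f$ are connected. Over $V\setminus M$ every fiber is finite, hence a single reduced point, so $f$ restricts to an isomorphism $V'\setminus E\xrightarrow{\ \sim\ }V\setminus M$; in particular the exceptional locus of $f$ coincides set-theoretically with $E=f^{-1}(M)$, whose reduction is the prime divisor $E_{1}$. Because $E_{1}$ surjects onto $M$ and is irreducible, $M$ is irreducible as well, and the generic fibre of $f|_{E_{1}}\colon E_{1}\to M$ has dimension $\dim V-1-\dim M$.

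The heart of the argument is to show that the inverse image ideal sheaf $\mathcal{I}:=\mathcal{I}_{M}\cdot\mathcal{O}_{V'}$, which cuts out the scheme $E$, is invertible; equivalently that $E$ carries no embedded or lower-dimensional components and is a Cartier divisor supported on $E_{1}$. This is where the smoothness of $M$ enters decisively: locally on $V$ one may write $\mathcal{I}_{M}=(x_{1},\dots,x_{c})$ with $c=\operatorname{codim}_{V}M$ and $x_{1},\dots,x_{c}$ part of a regular system of parameters, so that $\mathcal{I}$ is generated by $f^{*}x_{1},\dots,f^{*}x_{c}$ in the regular local rings of $V'$. At the generic point $\eta$ of $E_{1}$ the ring $\mathcal{O}_{V',\eta}$ is a discrete valuation ring, so $\mathcal{I}_{\eta}=\mathfrak{m}_{\eta}^{k}$ for a single integer $k\ge 1$; the content of the step is to propagate local principality from $\eta$ to every point of $E_{1}$, using that $E_{1}$ is a \emph{smooth} prime divisor and that $f$ is an isomorphism off $E_{1}$, so that the base ideal of $[f^{*}x_{1}:\cdots:f^{*}x_{c}]$ is exactly the divisorial ideal $\mathcal{O}_{V'}(-kE_{1})$ with no extra base points. (Equivalently, one can run an \'etale-local model argument: near a general, then an arbitrary, point of $M$ one shows that $f$ is analytically the standard blow-up of a linear subspace, and then globalizes using the irreducibility of $E_{1}$.) I expect this invertibility/no-embedded-components statement to be the main obstacle, since a principal radical does not force the ideal itself to be principal, so the smoothness of $M$ must be used in an essential way here.

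Granting invertibility of $\mathcal{I}$, the universal property of the blow-up $\pi\colon\tilde V=\mathrm{Bl}_{M}V\to V$ yields a factorization $f=\pi\circ g$ with $g\colon V'\to\tilde V$ a proper birational morphism; here I use that $\tilde V$ is smooth because $M$ is smooth in $V$, with irreducible exceptional divisor $\tilde E$ of codimension one. It remains to prove that $g$ is an isomorphism. Since $g^{-1}(\tilde E)=f^{-1}(M)$ and $g$ is surjective, one has $g(E_{1})=\tilde E$. Any divisor contracted by $g$ would be exceptional for $f=\pi\circ g$ and hence equal to $E_{1}$; but $\dim E_{1}=\dim\tilde E=\dim V-1$, so $g|_{E_{1}}$ is generically finite and $E_{1}$ is not contracted. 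Thus $g$ contracts no divisor. By purity of the exceptional locus for birational morphisms between smooth varieties, the exceptional locus of $g$ is either empty or pure of codimension one; contracting no divisor, it must be empty, so $g$ is a local isomorphism, and being proper, birational and bijective with $\tilde V$ normal it is an isomorphism. Transporting the blow-up structure back through $g$ then gives $\mathcal{I}=\mathcal{O}_{V'}(-E_{1})$, that is $k=1$ and $E=E_{1}$, which is the remaining assertion of the statement.
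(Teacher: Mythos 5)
Your proposal must be judged on its own terms: the paper offers no proof of this proposition, quoting it directly from Ein--Shepherd-Barron [ES89, Thm.~1.1]. The overall architecture you choose is the natural one, and your endgame is correct: granting that $\mathcal{I}:=\mathcal{I}_{M}\cdot\mathcal{O}_{V'}$ is invertible, the universal property of blowing up yields $g\colon V'\to\widetilde{V}=\mathrm{Bl}_{M}V$ with $f=\pi\circ g$; since $E_{1}$ is the only $f$-exceptional divisor and $g(E_{1})=\widetilde{E}$ has dimension $\dim V-1$, no divisor is $g$-contracted, and van der Waerden purity over the smooth variety $\widetilde{V}$ (smooth because $M$ is) then forces $\mathrm{Exc}(g)=\emptyset$, so $g$ is an isomorphism, giving $k=1$ and $E=E_{1}$. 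All of that is sound.

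However, the step you yourself flag as the main obstacle is genuinely missing, and nothing you write closes it. Writing $\mathcal{I}=\mathcal{O}_{V'}(-kE_{1})\cdot\mathfrak{b}$ with $V(\mathfrak{b})$ of codimension at least $2$, you must prove $\mathfrak{b}=\mathcal{O}_{V'}$, i.e.\ that $E$ has no embedded or lower-dimensional components. The two inputs you invoke do not do this: the fact that $f$ is an isomorphism off $E_{1}$ only gives $V(\mathfrak{b})\subseteq E_{1}$, which was already known; and the assertion that ``$E_{1}$ is a \emph{smooth} prime divisor'' is not among the hypotheses --- smoothness of $E_{1}$ is a \emph{consequence} of the conclusion (it becomes the exceptional divisor of a blow-up of a smooth center), so using it here is circular. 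Your parenthetical fallback --- showing that $f$ is analytically the standard blow-up near every point of $M$ and then globalizing --- is simply a local restatement of the theorem itself, not an argument; establishing that local model (equivalently, controlling \emph{all} fiber dimensions of $f$ over $M$, not just the generic one, so that the base ideal of $[f^{*}x_{1}:\cdots:f^{*}x_{c}]$ is divisorial) is precisely where the substantive work in Ein--Shepherd-Barron's proof lies. Since the factorization $f=\pi\circ g$, and hence everything after it, depends on this invertibility, what you have is a correct reduction of the theorem to its hardest step rather than a proof.
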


Now we are ready to prove Proposition \ref{not exist 15-dim. quadratic SQEL-mfd s.t. Sec=P^24}.

\begin{proof}[Proof of Proposition \ref{not exist 15-dim. quadratic SQEL-mfd s.t. Sec=P^24}]
Assume such a projective manifold $V$ exists. Then the secant defect $\delta(V)=7$. Take a general point $u\in\mathbb{P}^{24}$. We can assume $u\in U_{g}(V)$. Then the entry locus $\Sigma_{u}$ is a $7$-dimensional irreducible smooth quadric hypersurface, and the secant cone $C_{u}$ of the entry locus is an $8$-dimensional linear subspace. Consider the linear projection $\mathbb{P}^{24}\dashrightarrow\mathbb{P}^{15}$ from $C_{u}$. Denote by $\pi: V\dasharrow\widetilde{V}$ the restriction to $V$ of the linear projection, where $\widetilde{V}=\pi(V)$. Remark that $C_{u}\cap V=\Sigma_{u}$ by Lemma \ref{C_u cap V=Sigma_u, C_(u_1) cap C_(u_2)=Sigma_(u_1) cap Sigma_(u_2) lem.}.

We claim that for any $y\in V\backslash\Sigma_{u}$, $\pi^{-1}\pi(y)$ is a linear space and $\pi^{-1}\pi(y)\cap\Sigma_{u}$ is a hyperplane of $\pi^{-1}\pi(y)$. The proof of this claim is the same with the discussion in \cite[Prop. 3.15]{AS13b}. Assume that $y_{1}$ and $y_{2}$ are two distinct points in $V\backslash\Sigma_{u}$ such that $\pi(y_{1})=\pi(y_{2})$. Denote by $u'=\langle y_{1}, y_{2}\rangle\cap C_{u}$. Then $u'\in\Sigma_{u}$, since otherwise $\{y_{1}, y_{2}\}\subseteq\Sigma_{u}=\Sigma_{u'}$ implying a contradiction, where the equality follows from the fact $V$ is a SQEL-manifold. Remark that $V$ is a quadratic manifold, and the line $\langle y_{1}, y_{2}\rangle$ intersects with $V$ at three distinct points $y_{1}$, $y_{2}$ and $u'$. Thus, $\langle y_{1}, y_{2}\rangle\subseteq V$. So the claim holds.

Denote by
$$M=\overline{\{(v_{1}, v_{2})\in\Sigma_{u}\times (V\backslash\Sigma_{u})|\langle v_{1}, v_{2}\rangle\subseteq V\}}\subseteq\Sigma_{u}\times V.$$

Let $p_{1}$ and $p_{2}$ be the two projections from $M$ to $\Sigma_{u}$ and $V$ respectively. Denote by $V_{u}=p_{2}(M)$. Then by the claim above,
$$V_{u}=\overline{\{v\in V\backslash\Sigma_{u}|\dim(\pi^{-1}\pi(v))\geq 1\}}.$$

Take a general point $v\in\Sigma_{u}$. Then $v$ is general in $V$. By Proposition \ref{L^1(V) is QEL and Divisibility Theorem}$(i)$, $\mathcal{L}_{v}(V)$ is an irreducible variety of dimension $9$, which implies that $p_{1}^{-1}(v)$ is an irreducible variety of dimension $10$. Hence, there is a unique irreducible component $\widetilde{M}$ of $M$ dominating $\Sigma_{u}$ by $p_{1}$. Moreover, $\dim(\widetilde{M})=17$. Denote by $\widetilde{V}_{u}=p_{2}(\widetilde{M})$ and $\widetilde{p}_{2}=p_{2}|_{\widetilde{M}}: \widetilde{M}\rightarrow\widetilde{V}_{u}$.
Denote by $c$ the codimension of $\widetilde{V}_{u}$ in $V$. Since $\dim(\widetilde{M})>2\dim(\Sigma_{u})$, we get that $\widetilde{V}_{u}\backslash\Sigma_{u}\neq\emptyset$. Take an arbitrary point $y\in\widetilde{V}_{u}\backslash\Sigma_{u}$. Then $\dim(p_{2}^{-1}(y))\geq\dim(\widetilde{M})-\dim(\widetilde{V}_{u})=c+2$. Moreover, $p_{2}^{-1}(y)=(\pi^{-1}\pi(y)\cap \Sigma_{u})\times\{y\}$, and $\pi^{-1}\pi(y)\cap \Sigma_{u}$ is a hyperplane in the linear space $\pi^{-1}\pi(y)$. Since $\Sigma_{u}$ is a smooth quadric hypersurface, we get that $\dim(p_{2}^{-1}(y))\leq\frac{\dim(\Sigma_{u})}{2}$. So $c=0$ or $1$. In particular, $\dim(V_{u})\geq\dim(\widetilde{V}_{u})\geq 14$.

\medskip

Case 1: Assume $\dim(V_{u})=15$. Then $V_{u}=V$ is irreducible. Remark that for any point $y\in V\backslash\Sigma_{u}$, $p_{2}^{-1}(y)$ is irreducible. Thus, $M\cap(\Sigma_{u}\times (V\backslash\Sigma_{u}))$ is irreducible. So $M$ is irreducible. In particular, $\widetilde{M}=M$ and $\widetilde{V}_{u}=V_{u}=V$. Take a general point $v\in\Sigma_{u}$. Now we consider the linear projection $\mathbb{P}^{24}\dashrightarrow\mathbb{P}^{16}$ from the tangent space $\mathbb{T}_{v}\Sigma_{u}$. Denote by $\pi_{u}: V\dashrightarrow\overline{V}$ the restriction to $V$ of this projection, where $\overline{V}=\pi_{u}(V)$. Then $\pi=\pi_{p}\circ\pi_{u}$, where $p=\pi_{u}(C_{u})$ is a point in $\overline{V}$, and $\pi_{p}: \overline{V}\dashrightarrow\pi_{p}(\overline{V})$ is the restriction to $\overline{V}$ of the projection $\mathbb{P}^{16}\dashrightarrow\mathbb{P}^{15}$ from $p$. We have the following commutative diagram:
\begin{eqnarray*}
\xymatrix{&V\ar@{-->}[ld]_-{\pi}\ar@{-->}[d]^-{\pi_{u}}\ar@{-->}[rd]^-{\pi_{v}}&\\
\widetilde{V}&\overline{V}\ar@{-->}[l]_-{\pi_{p}}\ar@{-->}[r]&\pi_{v}(V),
}
\end{eqnarray*}
where $\pi_{v}: V\dashrightarrow\pi_{v}(V)$ the tangential projection at $v$. Remark that $\pi$, $\pi_{u}$ and $\pi_{v}$ are restrictions to $V$ of linear projections from $\mathbb{P}^{24}$ with the center being $C_{u}$, $\mathbb{T}_{v}\Sigma_{u}$ and $\mathbb{T}_{v}V$, respectively.

Take a general point $v'\in V$. Since $\pi^{-1}\pi(v')$ is a linear space and $\pi^{-1}\pi(v')\cap\Sigma_{u}=\pi^{-1}\pi(v')\cap C_{u}$ is a hyperplane of $\pi^{-1}\pi(v')$, we get that $\pi_{u}(\pi^{-1}\pi(v'))$ is a line passing through $p\in\overline{V}$. Hence, $\pi_{u}^{-1}\pi_{u}(v')$ is a hyperplane of $\pi^{-1}\pi(v')$. Thus,
$$\dim(\pi_{u}^{-1}\pi_{u}(v'))=\dim(\pi^{-1}\pi(v'))-1=\dim(\widetilde{M})-\dim(\widetilde{V}_{u})=2.$$

By Proposition \ref{fiber of tangential projection},  the fiber $\pi_{v}^{-1}\pi_{v}(v')$ is the entry locus $\Sigma_{u'}$ passing through $v$ and $v'$, where $u'\in\langle v, v'\rangle$ and $u'\notin V$. By the generality of the choice of $u$, $v$ and $v'$, we get that $u'\in U_{g}(V)$, and  $\Sigma_{u}\neq\Sigma_{u'}$. Note that $u\in U_{g}(V)$.
Then by Lemma \ref{C_u cap V=Sigma_u, C_(u_1) cap C_(u_2)=Sigma_(u_1) cap Sigma_(u_2) lem.}, $\Sigma_{u}\cap\Sigma_{u'}=C_{u}\cap C_{u'}$ is a linear subspace. Let $\mathbb{P}_{uu'}=C_{u}\cap C_{u'}$, $\mathbb{P}_{uv'}=\langle v', \mathbb{P}_{uu'}\rangle$, and $s=\dim(\mathbb{P}_{uu'})$.  If $\mathbb{P}_{uv'}\subseteq\Sigma_{u'}$, then $\langle v, v'\rangle \subseteq\mathbb{P}_{uv'}\subseteq\Sigma_{u'}\subseteq V$. By the generality of $v$ and $v'$, $V$ is a linear subspace. This contradicts the assumption $\text{Sec}(V)=\mathbb{P}^{24}$. Hence, $\mathbb{P}_{uv'}\nsubseteq\Sigma_{u'}$. Remark that $\Sigma_{u'}\subseteq C_{u'}$ is a quadric hypersurface and $\mathbb{P}_{uv'}\subseteq C_{u'}$. Then there is an $s$-dimensional linear subspace $\widetilde{\mathbb{P}}_{uu'}$ containing $v'$ such that $\mathbb{P}_{uv'}\cap\Sigma_{u'}=\mathbb{P}_{uu'}\cup\widetilde{\mathbb{P}}_{uu'}$. Hence,
$$\pi_{u}^{-1}\pi_{u}(v')=\overline{(\langle\mathbb{T}_{v}\Sigma_{u}, v'\rangle\cap\Sigma_{u'})\backslash\Sigma_{u}}=\widetilde{\mathbb{P}}_{uu'}.$$
Thus, $s=\dim(\widetilde{\mathbb{P}}_{uu'})=\dim(\pi_{u}^{-1}\pi_{u}(v'))=2$.

By Proposition \ref{L^1(V) is QEL and Divisibility Theorem}$(i)$, $V^{(1)}\subseteq\mathbb{P}^{14}$ is a QEL-manifold of dimension $9$ with secant defect $5$. Then by Proposition \ref{classification of QEL with delta>n/2}, $V^{(1)}\subseteq\mathbb{P}^{14}$ is projectively equivalent to a nonsingular section of $S^{10}$ by a hyperplane $L$ in $\mathbb{P}^{15}$. Note that the intersection $\mathcal{L}_{v}(\Sigma_{u})\cap\mathcal{L}_{v}(\Sigma_{u'})=\mathcal{L}_{v}(\mathbb{P}_{uu'})$ is a projective line. Since $\mathcal{L}_{v}(\Sigma_{u})$ and $\mathcal{L}_{v}(\Sigma_{u'})$ are two different entry loci of $V^{(1)}$ with a nonempty intersection, we know from Example \ref{entry loci on S^10 e.g.} that $\dim(\mathcal{L}_{v}(\Sigma_{u})\cap\mathcal{L}_{v}(\Sigma_{u'}))\geq 2$, which is a contradiction.

\medskip

Case 2: Assume $\dim(V_{u})=14$. Then $\dim(\widetilde{V}_{u})=14$ and $c=1$. Since the closures of fibers of $\pi$ are linear, it is a birational map with exceptional locus $V_{u}$ (set-theoretically), and $\widetilde{V}=\pi(V)=\mathbb{P}^{15}$.

Denote by $V'=\text{Bl}_{\Sigma_{u}}V$, and $f: V'\rightarrow V$ the blow-up morphism. Since $V$ is a quadratic manifold, $\Sigma_{u}$ is the scheme-theoretic intersection of $C_{u}$ and $V$. Hence, $\Sigma_{u}$ is the base locus scheme of $\pi$. Hence, there is a morphism $g: V'\rightarrow \mathbb{P}^{15}$ such that $\pi=g\circ f^{-1}$. Equip $V_{u}$ with the reduced closed subscheme structure. Denote by $V'_{u}=\text{Bl}_{\Sigma_{u}}V_{u}$ and $B=g(V'_{u})$. So we have the following commutative diagram:
\begin{eqnarray*}
\xymatrix{&&V'_{u}=\text{Bl}_{\Sigma_{u}}V_{u}\ar[2, -2]_-{f|_{V'_{u}}}\ar[d]\ar[2, 2]^-{g|_{V'_{u}}}&&\\
&&V'=\text{Bl}_{\Sigma_{u}}V\ar[ld]_-{f}\ar[rd]^-{g}&&\\
V_{u}\ar[r]&V\ar@{-->}[0, 2]^-{\pi}&&\widetilde{V}=\mathbb{P}^{15}&B=g(V'_{u}),\ar[l]
}
\end{eqnarray*}
where the morphisms $V_{u}\rightarrow V$, $V'_{u}\rightarrow V'$ and $B\rightarrow\widetilde{V}$ are natural inclusions.

Let $E_{V}=f^{-1}(\Sigma_{u})$, and $H_{V}$ (resp. $H$) be the pull-back of a hyperplane section of $V$ (resp. of $\mathbb{P}^{15}$). Denote by $K_{V'}$ the canonical divisor of $V'$. Since $V'=\text{Bl}_{\Sigma_{u}}V$, we get that
\begin{eqnarray}
&&\text{Pic}(V')=\mathbb{Z}(H_{V})\oplus\mathbb{Z}(E_{V}); \label{eqn. Pic(V')=Z+Z}\\
&&-K_{V'}=11H_{V}-7E_{V};  \label{eqn. -K_V'=11H_V-7E_V}\\
&&H=H_{V}-E_{V}.  \label{eqn. H=H_V-E_V}
\end{eqnarray}

Let $B_{1}=\{y\in\mathbb{P}^{15}\mid \dim(g^{-1}(y))\geq 1\}$. For any point $v\in V'_{u}\backslash E_{V}$,
$$\dim(g^{-1}g(v))=\dim(f^{-1}\pi^{-1}\pi f(v))=\dim(\pi^{-1}\pi f(v))\geq 1,$$
where the second equality follows from the fact that $\pi^{-1}\pi f(v)$ is a linear subspace (hence an irreducible variety) not contained in $\Sigma_{u}$. In particular, $g(V'_{u}\backslash E_{V})\subseteq B_{1}$. Hence, $B=g(V'_{u})\subseteq B_{1}$ and $V'_{u}\subseteq (g^{-1}(B))_{\text{red}}\subseteq (g^{-1}(B_{1}))_{\text{red}}$. By Proposition \ref{ein-shepherd pic =Z+Z then E irreducible}, $g^{-1}(B_{1})$ is irreducible. Since $\dim(V'_{u})=\dim(V')-1$, we get that $V'_{u}=(g^{-1}(B))_{\text{red}}=(g^{-1}(B_{1}))_{\text{red}}$, and $V'_{u}$ is an irreducible divisor. Thus, $\widetilde{V}_{u}=V_{u}=f(V'_{u})$, $B=g(V'_{u})=B_{1}$, and $V_{u}$ and $B$ are irreducible.

Remark that $B=\pi(V_{u})=\pi(\widetilde{V}_{u})$ and $V_{u}=\pi^{-1}(B)$. For a general point $b\in B$, $\pi^{-1}(b)$ is a linear subspace such that
$$\dim(\pi^{-1}(b))=\dim(\widetilde{M})-\dim(\widetilde{V}_{u})+1=4,$$
and $\pi^{-1}(b)\cap C_{u}$ is a linear subspace of dimension $3$ contained in $\Sigma_{u}$. Thus,
$$\dim(B)=\dim(V_{u})-\dim(\pi^{-1}(b))=10.$$

Denote by $B^{o}=B\backslash\text{Sing}(B)$, and $U=\mathbb{P}^{15}\backslash\text{Sing}(B)$, where $\text{Sing}(B)$ is the singular locus of $B$. By Proposition \ref{ein-shepherd blowing up}, $g^{-1}(B^{o})=g^{-1}(B^{o})_{\text{red}}= V'_{u}\cap g^{-1}(U)$, and $g^{-1}(U)\rightarrow U$ is the blow up of $U$ along $B_{1}^{o}$. Hence, the canonical divisor $K_{g^{-1}(U)}=g^{*}(K_{U})+4V'_{u}|_{g^{-1}(U)}$. So $-K_{V'}=16H-4V'_{u}$. Combining with the formulae (\ref{eqn. -K_V'=11H_V-7E_V})(\ref{eqn. H=H_V-E_V}), we get $V'_{u}=\frac{5}{4}H_{V}-\frac{9}{4}E_{V}$. Remark that $\text{Pic}(V')=\mathbb{Z}(H_{V})\oplus\mathbb{Z}(E_{V})$. Then $V'_{u}$ is not a Cartier divisor, which contradicts the smoothness of $V'$. This finishes the proof.
\end{proof}

\small

Qifeng Li

\smallskip

Institute of Mathematics, AMSS, Chinese Academy of Sciences,

\smallskip

55 Zhongguancun East Road, Beijing, 100190, P. R. China

\smallskip

E-mail address: qifengli@amss.ac.cn

\end{document}